\def\dist{\mathop{\rm dist}\nolimits}
\newcommand{\R}{\mathbb{R}}
\newcommand{\Rd}{ \mathbb{R}^{d}}
\newcommand{\Rdwithouto}{\mathbb{R}^{d}\setminus\{0\}}
\newcommand{\N}{\mathbb{N}}
\newcommand{\Z}{\mathbb{Z}}
\newcommand{\indyk}[1]{\mathds{1}_{#1}}
\newcommand{\sfera}{ \mathds{S}}
\newcommand{\Borel}{ {\mathcal{B}}(\Rd) }
\newcommand{\Fourier}{ {\mathcal{F}}}
\newcommand{\nubounded}[1]{\bar{\nu}_{#1}}
\newcommand{\scalp}[2]{#1\cdot#2}
\newcommand{\lin}{{\rm lin}}
 \def\dist{\mathop{\rm
    dist}\nolimits} \def\diam{\mathop{\rm diam}\nolimits}
\newtheorem{lemat}{Lemma}
\newtheorem{prop}{Proposition}
\newtheorem{twierdzenie}{Theorem}
\newtheorem{wniosek}[lemat]{Corollary}
\newtheorem{uwaga}{Remark}
\newtheorem{example}{Example}
\newcounter{conum} \setcounter{conum}{-1}
\renewcommand{\Re}{\ensuremath{\operatorname{Re}}}
\begin{document}

\title{Small time sharp bounds for kernels of convolution semigroups}
\author{Kamil Kaleta and Pawe{\l} Sztonyk}

\address{Kamil Kaleta, Institute of Mathematics,
  University of Warsaw,
  ul. Banacha 2,
  02-097 Warszawa
  and Institute of Mathematics and Computer Science \\ Wroc{\l}aw University of Technology
\\ Wyb. Wyspia{\'n}skiego 27, 50-370 Wroc{\l}aw, Poland}
\email{kkaleta@mimuw.edu.pl, kamil.kaleta@pwr.wroc.pl}

\address{Pawe{\l} Sztonyk \\ Institute of Mathematics and Computer Science,
  Wroc{\l}aw University of Technology,
  Wybrze{\.z}e Wyspia{\'n}\-skie\-go 27,
  50-370 Wroc{\l}aw, Poland}
\email{pawel.sztonyk@pwr.wroc.pl}

\maketitle

\begin{abstract}
We study small time bounds for transition densities of convolution semigroups corresponding to pure jump L\'evy processes in $\R^d$, $d \geq 1$, including those with jumping kernels exponentially and subexponentially localized at infinity. For a large class of L\'evy measures, non-necessarily symmetric {nor} absolutely continuous with respect to the Lebesgue measure, we find the optimal{, both in time and space,} upper bound for the corresponding transition kernels at infinity. In case of L\'evy measures that are symmetric and absolutely continuous, with densities $g$ such that $g(x) \asymp f(|x|)$ for nonincreasing profile functions $f$, we also prove the full characterization of the sharp two-sided transition densities bounds of the form
$$
p_t(x) \asymp h(t)^{-d} \cdot \indyk{\left\{|x|\leq \theta h(t)\right\}} + t \, g(x) \cdot \indyk{\left\{|x| \geq \theta h(t)\right\}}, \quad t \in (0,t_0), \ \ t_0>0, \ \ x \in \R^d.
$$
This is done for small and large $x$ separately. Mainly, our argument is based on new precise upper bounds for convolutions of L\'evy measures. Our investigations lead to {an} interesting and surprising dichotomy of the decay properties at infinity for transition kernels of pure jump L\'evy processes. All results are obtained  {solely} by analytic methods, {without} use of probabilistic arguments.

\bigskip
\noindent
\emph{Key-words}: L\'evy measure, L\'evy process, tempered process, convolution semigroup, convolution of measures, transition density, heat kernel, sharp estimate, exponential decay

\bigskip
\noindent
2010 {\it MS Classification}: Primary 60G51, 60E07; Secondary 60J35, 47D03, 60J45 .\\
  
\end{abstract}

\footnotetext{
K. Kaleta was supported by the National Science Center (Poland) post-doctoral internship
grant on the basis of the decision No. DEC-2012/04/S/ST1/00093.

P. Sztonyk was supported by the National Science Center (Poland) grant on the basis of the decision No. DEC-2012/07/B/ST1/03356.
}
\section{Introduction and statement of results}

We study a convolution semigroup of probability measures $\{ P_t,\, t\geq 0 \}$ on $\R^d$, $d\in \left\{1,2,...\right\}$, determined by their Fourier transforms $\Fourier(P_t)(\xi)=\int_{\Rd} e^{i\scalp{\xi}{y}}P_t(dy)=\exp(-t\Phi(\xi))$, $t>0$, with the L\'evy-Khintchine exponent of the form
\begin{equation*}
  \Phi(\xi) =   \int_{\Rdwithouto} \left(1-e^{i\scalp{\xi}{y}}+i\scalp{\xi}{y}\indyk{B(0,1)}(y)\right)\nu(dy) - i\scalp{\xi}{b}, \quad \xi\in\Rd,
\end{equation*}
where $\nu$ is an infinite L\'evy measure on $\Rdwithouto$, i.e., $\int_{\Rdwithouto} \left(1\wedge |y|^2\right)\,\nu(dy) < \infty$ and \linebreak $\nu(\Rdwithouto)=\infty$, and $b\in\Rd$ is a drift term \cite{J1}. It is well known that there exists a pure jump L\'evy process $\{X_t,\,t\geq 0\}$ in $\R^d$ with transition {probabilities} given by $\{P_t,\,t\geq 0\}$ \cite{Sato} (in this paper, by pure jump L\'evy process we mean a process with no Gaussian component). The densities of {the} measures $P_t$ with respect to the Lebesgue measure are denoted by $p_t$, whenever they exist. For some sufficient and necessary conditions on the existence of transition densities $p_t$ we refer the reader to \cite{KSch}.

The function $p_t(x)$ is the fundamental solution ({the} heat kernel) of an evolution equation involving the infnitesimal generator
of the process $\{X_t,\,t\geq 0\}$, whose explicit expression is typically impossible to get. Therefore, it is a basic problem, both in probability theory and in analysis, to obtain sharp estimates of $p_t(x)$. In case of symmetric diffusions on $\R^d$, whose infinitesimal generators are uniformly elliptic and bounded divergence form operators, it is well known that the heat kernels enjoy the celebrated Aronson's Gaussian type estimates \cite{A}. 

The problem of estimates of transition densities for jump L\'evy processes has been intensively studied for many decades, mostly for stable processes \cite{BG60, PT69, H94, H03, G93, GH93, D91, W07, BS2007}. The general method of estimating {the} kernels of L\'evy semigroups is based on their convolutional structure and construction. Recent papers \cite{S10, S11, KnopKul, KSch1, KSz1, Knop13} contain the estimates for more general classes of L\'evy processes, including tempered processes with intensities of jumps lighter than polynomial. The paper \cite{BGR13} focuses on the estimates of densities for isotropic unimodal L\'evy processes with L\'evy-Khintchine exponents having the weak local
scaling at infinity, while the papers \cite{Mimica1, KSz1} discuss the processes with higher intensity of small jumps, remarkably different than {the} stable one. In \cite{ChKimKum, ChKum08, KSz} the authors investigate the case of more general, non-necessarily space homogeneous, symmetric jump Markov processes with jump intensities dominated by those of isotropic stable processes. Estimates of kernels for processes which are solutions of SDE{s} driven by L\'evy processes were obtained in \cite{Lean, Ishi94, Picard97, Pic, Ishi01}. For estimates of derivatives of L\'evy densities we refer the reader to \cite{S10a, BJ07, SchSW12, KSz1, KR13, Knop13}. In \cite{JKLSch2012} the authors gave a very interesting geometric interpretation of the transition densities for symmetric L\'evy processes. 

In the present paper, we focus on {a} special type of the small time behaviour of the densities $p_t$. Before we state our main results, we first need to introduce some necessary auxiliary notation and set the framework for our study. Denote
$$
  \Psi(r)=\sup_{|\xi|\leq r} \Re \Phi(\xi),\quad r>0.
$$
We note that $\Psi$ is continuous and non-decreasing{, and we also have} $\sup_{r>0} \Psi(r)=\infty$, since $\nu(\Rdwithouto)=\infty$.
Let 
$$\Psi^{-1}(s)=\sup\{r>0: \Psi(r)=s\} \quad \text{for} \ s\in (0,\infty)$$ 
so that $\Psi(\Psi^{-1}(s))=s$ for $s\in (0,\infty)$ and $\Psi^{-1}(\Psi(s))\geq s$ for $s>0$. To shorten the notation below, we set 
\begin{equation}\label{eq:def_br}
h(t):= \frac{1}{\Psi^{-1}\left(\frac{1}{t}\right)}
\quad  \text{and} \quad 
  b_r := \left\{
  \begin{array}{ccc}
    b - \int_{r<|y|<1} y \, \nu(dy) & \mbox{  if  } & r \leq 1,\\
    b + \int_{1<|y|<r} y \, \nu(dy) & \mbox{  if  } & r > 1.
  \end{array}\right. 
\end{equation}

{Substantial} part of our work {is concerned with} a large class of L\'evy measures that are non-necessarily symmetric {nor} absolutely continuous with respect to the Lebesgue measure. However, the sharpness of our results is most evident under the reasonable assumption that the L\'evy measure $\nu$ has the density $g(x)=g(-x)$ such that $g(x) \asymp f(|x|)$, $x \in \Rdwithouto$, for some nonincreasing function $f:(0,\infty) \to (0,\infty)$. With{in} this framework, we consider the following type of {small} time estimates of the densities $p_t(x)$, which is known to hold for a wide class of jump L\'evy processes (for simplicity we assume here that $b=0$). There are constants $c_1, C_1, C_2 \in (0,1]$, $ c_2, C_3, C_4 \geq 1$, $\theta> 0$ and $t_0>0$ such that
\begin{align} \label{eq:est1}
  C_1 \, [h(t)]^{-d} \leq p_t(x) \leq  C_3 \, [h(t)]^{-d}, \quad t \in (0,t_0], \quad |x| \leq \theta h(t),
\end{align}
and
\begin{align} \label{eq:est2}
  C_2 \, t \, f(c_2|x|) \leq p_t(x) \leq  C_4 \, t \,  f(c_1|x|), \quad t \in (0,t_0], \quad \theta h(t) \leq |x|,
\end{align}
where $f$ is the profile of the density $g$ of $\nu$. These two-sided bounds are called \emph{sharp} when $c_1 = c_2 = 1$ (the optimality of constants $C_1, C_2, C_3$ and $C_4$ is not required). Clearly, this is irrelevant when the profile function has {doubling} property, e.g. when $f(r)=r^{-d-\beta}$. However, if the {decay of the L\'evy measure at infinity is} faster than polynomial (e.g. $f(r) \asymp e^{-r}$ or $f(r) \asymp r^{-d-\beta} e^{-r}$ as $r \to \infty$), then it is {basic to establish where} sharp bounds hold or how sharp they {are}. Studying of sharpness and optimality of such results is one of the most difficult and fundamental problems in the modern theory of convolution semigroups. Many of {the results available} imply {bounds} for transition densities of jump processes with highly tempered L\'evy measures (see e.g. \cite{BGR13, ChKimKum, ChKum08, S10, S11, KSch1, KSz1, Knop13}). However, most of them are not sharp in the above sense. There is no comprehensive argument, or result, {which} ultimately explains and settles when exactly the sharp small time bounds for densities of jump L\'evy processes are satisfied. For many {examples} of tempered processes {with jump intensities exponentially localized at infinity} the problem of sharp estimates is still open and the sharpest possible estimates are not known. Such processes are important in mathematical physics (see \cite{CMS, KL}) and in financial mathematics (see e.g. \cite{BRKF} and references therein). In the present paper we address the problem of sharp small time bounds of integral kernels for a large class of convolution semigroups related to pure jump L\'evy processes. 

Beside some degenerate examples, in general, the L\'evy measures satisfy a kind of the doubling condition around zero. This property is inherited by the profile function and therefore we often have $f(c|x|) \asymp f(|x|)$ for any fixed $c>0$ and {all} small $x$. In many cases, the small time bounds of the densities $p_t(x)$ for small $x$ can be derived from the properties of the corresponding L\'evy-Khintchine exponent. Indeed, very often, by the Fourier transform, the asymptotics of $\Phi$ at infinity directly translates into the asymptotics of $p_t$ and $\nu$ at zero (see e.g. \cite{BGR13}). For large $x$ this picture is usually dramatically different. As we will see below, in this case the asymptotic behaviour of $p_t(x)$ strongly depends on {subtle} convolutional properties of the corresponding L\'evy measures. If the tail of the L\'evy measure {at infinity} is lighter than polynomial, then we can expect that $\lim_{r \to \infty} f(cr)/f(r) = \infty$, for all $c \in (0,1)$. In this case, the L\'evy-Khintchine exponent vanishes at zero quadratically and the sharp bounds of $p_t(x)$ for large $x$ and small $t$ cannot be derived from it. Furthermore, if we have the upper bound in \eqref{eq:est2} with $f(c_1|x|)$, for some $c_1 \in (0,1)$, then $f(c_1|x|)$ cannot be directly replaced by $cf(|x|)$ for any constant $c$. Of course, this does not mean that in this case the bound with the best possible rate $f(|x|)$ cannot hold. Unfortunately, in most cases it is too difficult to settle whether the worse rate in \eqref{eq:est2} is only a consequence of the flaw of the method or assumptions, or whether, perhaps, the bound of the form \eqref{eq:est2} with the exact rate $f(|x|)$ does not hold for large $x$. It is known that the L\'evy measure $\nu(dx)=g(x)\, dx$ is a vague limit of measures $P_t(dx)/t=(p_t(x)/t)\, dx$ as $t \to 0^{+}$ outside the origin, which may cause the false intuition that for small $t$ {both} functions $p_t(x)$ and $tg(x)$ should share exactly the same asymptotic properties. As we will show below, although sometimes sharp bounds in \eqref{eq:est2} seem to be possible or even evident, they surprisingly do not hold in general. Therefore it is quite reasonable to ask when exactly these bounds are satisfied in the{ir} sharpest form. 

The following Theorem \ref{th:main1} definitively resolves this problem for convolution semigroups built on symmetric L\'evy measures that are absolutely continuous with respect to the Lebesgue measure, with densities comparable to nonincreasing profiles. It gives {a} full characterization of sharp bounds \eqref{eq:est1}-\eqref{eq:est2} with {exact} rate $f(|x|)$ for all $x \in \R^d$. For later use we denote $g_r(\cdot)= g(\cdot) \indyk{B(0,r)^c} (\cdot)$, $r>0$.

\begin{twierdzenie}\label{th:main1} Let
$\nu(dx)=g(x)dx$ be a L\'evy measure such that $\nu(\Rdwithouto)=\infty$, $g(y)=g(-y)$ and {$g(x) \asymp f(|x|)$, $x \in \Rdwithouto$, for some nonincreasing function $f:\:(0,\infty)\to (0,\infty)$.} Moreover assume that $b \in \R^d$.

Then the following two conditions \textsc{(1.1)} and \textsc{(1.2)} are equivalent.
\begin{itemize}
\item[\textsc{(1.1)}] There exist $r_0>0$ and constants $L_1, L_2 >0$ such that the following two estimates 
\begin{itemize}
\item[(a)] $ g_{r_0} * g_{r_0}(x) \leq L_1 g(x)$, $|x| \geq 2 r_0$,
\item[(b)] $\Psi(1/|x|) \leq L_2 |x|^d g(x)$, $0<|x| \leq 2 r_0$,
\end{itemize}
hold.
\item[\textsc{(1.2)}] There exist $t_0, \theta >0$ and {constants} $C_{1} - C_{4}$ such that for every $t \in (0,t_0]$ the transition densities $p_t$ exist and satisfy
\begin{align*}
  C_{1} \, [h(t)]^{-d} \leq p_t(x+tb) \leq  C_{3} \, [h(t)]^{-d}, \quad t \in (0,t_0], \quad |x| \leq \theta h(t),
\end{align*}
and
\begin{align*}
  C_{2} \, t \, g(x) \leq p_t(x+tb) \leq  C_{4} \, t \, g(x), \quad t \in (0,t_0], \quad  |x| \geq \theta h(t).
\end{align*}
\end{itemize}
\end{twierdzenie}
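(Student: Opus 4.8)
The plan is to prove the two implications separately. The implication \textsc{(1.2)}$\Rightarrow$\textsc{(1.1)} is the easier ``necessity'' half and should follow from soft manipulations of the bounds, whereas \textsc{(1.1)}$\Rightarrow$\textsc{(1.2)} is the analytic core and relies on the precise convolution estimates advertised in the introduction. Throughout it is convenient to pass, whenever the densities exist, to the recentred densities $q_t(x):=p_t(x+tb)$: since $\{\delta_{-tb}\ast P_t:\ t\ge 0\}$ is again a convolution semigroup, these are probability densities with $q_{s+t}=q_s\ast q_t$, and \textsc{(1.2)} asserts exactly that $q_t$ has the stated small/large-$x$ profile on $(0,t_0]$.

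For \textsc{(1.2)}$\Rightarrow$\textsc{(1.1)} I would fix $r_0\le\tfrac12\theta h(t_0)$ and argue as follows. To get (b): for $0<|x|\le 2r_0$, choosing $t:=1/\Psi(\theta/|x|)$ makes $\theta h(t)=|x|$ (and $t\le t_0$), so $x$ sits on the interface and both displays of \textsc{(1.2)} apply at $x$, giving $C_1[h(t)]^{-d}\le q_t(x)\le C_4\,t\,g(x)$, i.e. $\Psi(\theta/|x|)\le \tfrac{C_4}{C_1\theta^d}|x|^d g(x)$; combined with $\Psi(2r)\le 4\Psi(r)$ (a consequence of $1-\cos 2s\le 4(1-\cos s)$, hence $\Re\Phi(2\xi)\le 4\Re\Phi(\xi)$) this yields $\Psi(1/|x|)\le C(\theta)\Psi(\theta/|x|)\le L_2|x|^d g(x)$. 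To get (a): set $\tau:=\bigl(\sup\{t>0:\theta h(2t)\le r_0\}\bigr)\wedge t_0>0$ and take any $|x|\ge 2r_0$; on $\{y:|y|\ge r_0,\ |x-y|\ge r_0\}$ both $|y|$ and $|x-y|$ exceed $\theta h(\tau)$, so the large-$x$ lower bound of \textsc{(1.2)} gives
\[
q_{2\tau}(x)=\int q_\tau(x-y)q_\tau(y)\,dy\ \ge\ C_2^2\tau^2\!\!\int_{|y|\ge r_0,\,|x-y|\ge r_0}\!\! g(y)g(x-y)\,dy\ =\ C_2^2\tau^2\,(g_{r_0}\ast g_{r_0})(x),
\]
while $q_{2\tau}(x)\le 2C_4\tau g(x)$ because $|x|\ge 2r_0\ge\theta h(2\tau)$; hence (a) holds with $L_1=2C_4/(C_2^2\tau)$. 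The point is that $\tau$ is a \emph{fixed} time: taking $t\to 0$ here would reintroduce a factor $1/t$ and ruin the estimate.

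For \textsc{(1.1)}$\Rightarrow$\textsc{(1.2)} I would proceed in four steps. (i) Regularity consequences of \textsc{(1.1)}: from (b) together with the standard lower bound $\Psi(r)\gtrsim r^2\!\int_{|y|<1/r}|y|^2\,\nu(dy)$ one gets a doubling-type control $f(cr)\asymp f(r)$ for small $r$; from (a) one gets that $f$ decays \emph{at most exponentially} at infinity (this is the ``dichotomy'': $g_{r_0}\ast g_{r_0}\le L_1 g$ fails e.g. for Gaussian-type tails), so that $f(r-c h(t))\le C f(r)$ uniformly for small $t$. (ii) For small $t$ put $\rho\asymp h(t)$, split $\nu=\nu|_{B(0,\rho)}+\nu_{(\rho)}$ with $\lambda_\rho:=\nu(B(0,\rho)^c)\lesssim 1/t$, and use the compound Poisson representation
\[
q_t\ =\ e^{-t\lambda_\rho}\sum_{n\ge 0}\frac{t^{n}}{n!}\,g_\rho^{\ast n}\ast q_t^{(\rho)},
\]
where $q_t^{(\rho)}$ is the recentred density of the small-jump part (drift $b_\rho$); a Fourier-inversion estimate using $\Psi$ and (b) gives existence of $q_t^{(\rho)}$ together with $\|q_t^{(\rho)}\|_\infty\le C[h(t)]^{-d}$, and a cosine argument on $\{|\xi|\le\varepsilon/h(t)\}$ gives $q_t^{(\rho)}(x)\ge c[h(t)]^{-d}$ for $|x|\le\theta h(t)$. (iii) Upper bounds: since each $g_\rho^{\ast n}\ast q_t^{(\rho)}/\lambda_\rho^{n}$ is a probability density $\le\|q_t^{(\rho)}\|_\infty$, the whole series is $\le C[h(t)]^{-d}$, which settles the small-$x$ regime; for $|x|\ge\theta h(t)$ one instead controls $\sum_{n\ge1}\frac{t^{n}}{n!}g_\rho^{\ast n}(x)$ by iterating (a) into pointwise bounds of the type $g_\rho^{\ast n}(x)\le a^{n-1}g(x)$ on $\{|x|\ge n\rho\}$, handling the complementary region $\{|x|<n\rho\}$ (nonempty only for $n\gtrsim|x|/h(t)$) with the at-most-exponential decay, so that the $n\ge1$ part is dominated by $C\,t\,g(x)$. (iv) Lower bounds: for $|x|\le\theta h(t)$ use the $n=0$ term and (ii); for $|x|\ge\theta h(t)$ use the single-jump term $t\,e^{-t\lambda_\rho}(\nu_{(\rho)}\ast q_t^{(\rho)})(x)$ and the near-constancy of $f$ at scale $h(t)$ to get $q_t(x)\ge c\,t\,g(x)$. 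Here (b) is precisely what makes the two regimes compatible at the interface $|x|\asymp h(t)$, where it forces $t\,g(x)\gtrsim[h(t)]^{-d}$.

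The main obstacle I anticipate is step (iii): bootstrapping the \emph{single} inequality (a) into sharp pointwise control of \emph{all} the convolution powers $g_\rho^{\ast n}$, with the exact rate $g(x)$ (constant $c_1=1$) and constants $a^{n-1}$ summable against $t^{n}/n!$, uniformly in $n$ and in the shrinking radius $\rho\asymp h(t)$. This is the quantitative heart of the paper; the delicate points are the precise choice of truncation level, the induction on $n$, and making the split $\{|x|\ge n\rho\}$ versus $\{|x|<n\rho\}$ interact correctly with the exponential-type decay so that nothing worse than $C\,t\,g(x)$ survives. Secondary difficulties are the existence and the two-sided $[h(t)]^{-d}$ bound for $q_t^{(\rho)}$ (a Fourier-inversion estimate whose integrability must be squeezed out of (b)) and the uniform near-constancy of $f$ over length scales $\lesssim h(t)$, needed for the matching large-$x$ lower bound.
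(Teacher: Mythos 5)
Your overall plan does match the paper's: the converse \textsc{(1.2)}$\Rightarrow$\textsc{(1.1)} via soft semigroup manipulations, and the direct implication via the compound--Poisson decomposition $p_t(\cdot+tb_{h(t)})=\mathring p_t\ast\bar P_t$ at scale $\rho=h(t)$, with convolution estimates for $\bar\nu_\rho^{\ast n}$ bootstrapped from the single inequality (a), and an $L^\infty$/Fourier bound plus a cosine lower bound on the small-jump density. The paper records Theorem \ref{th:main1} as a pure conjunction of Theorem \ref{th:main5} (small $|x|$, where \textsc{(1.1)}(b) is both necessary and sufficient) and Theorem \ref{th:main3} (large $|x|$, where \textsc{(1.1)}(a) is both necessary and sufficient under the regularity assumption \textbf{(E)}, which \textsc{(1.1)}(b) supplies via Lemma \ref{lem:getE}); your semigroup argument for \textsc{(1.2)}$\Rightarrow$\textsc{(1.1)}(a) is literally the one used to prove \textsc{(3.2)}$\Rightarrow$\textsc{(3.1)}.

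There is, however, a genuine gap in your step (iii). The bound $g_\rho^{\ast n}(x)\le a^{n-1}g(x)$ with a constant $a$ that is \emph{uniform in the shrinking truncation radius} $\rho\asymp h(t)$ is false: already for $n=2$ and fixed $|x|$, $g_\rho\ast g_\rho(x)\uparrow g\ast g(x)=+\infty$ as $\rho\downarrow 0$ because $\nu$ is infinite, so the constant must blow up, and in fact it blows up like $\Psi(1/\rho)\asymp|\bar\nu_\rho|$. The paper's Lemma \ref{lm:conv_est3} obtains exactly the right $\rho$-dependence,
$\int_{|x-y|>r_0}f(|y-x|)\,\bar\nu_r^{\,n\ast}(dy)\le\bigl(C_{11}\Psi(1/r)\bigr)^n f(|x|)$ and
$\bar\nu_r^{\,n\ast}(A)\le C_{12}^n\bigl[\Psi(1/r)\bigr]^{n-1}f(\dist(A,0))(\diam A)^\gamma$,
and the point you are missing is that these blow-ups are \emph{harmless} precisely because $t\,\Psi(1/h(t))=1$, so that $t^n/n!$ absorbs $[\Psi(1/r)]^{n-1}$ leaving a summable $C^n/n!$. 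To run this induction uniformly in $r\in(0,r_0]$ one cannot work directly with the raw inequality (a); one needs the $r$-dependent refinement \textbf{(C)}, $\int_{|x-y|>r_0,\,|y|>r}f(|y-x|)\,\nu(dy)\le L_1\Psi(1/r)f(|x|)$, and Lemma \ref{lm:useful_2} is exactly the observation that in your setting (a) and \textbf{(C)} are equivalent. The paper also never needs your split between $\{|x|\ge n\rho\}$ and $\{|x|<n\rho\}$: Lemma \ref{lm:conv_est3}(b) is already valid for $\dist(A,0)\ge 3r_0-r_0/2^n$, and what handles the remaining region is the superexponential decay of $\mathring p_t$ off $B(0,h(t))$ (the function $G$), combined with Lemma \ref{lm:useful}(b) which shows $G$ is dominated by $f$; your phrase ``at-most-exponential decay'' points in that direction but leaves the actual mechanism unspecified.

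A secondary, fixable, issue is your argument for \textsc{(1.2)}$\Rightarrow$\textsc{(1.1)}(b): since $\Psi^{-1}(s)=\sup\{r:\Psi(r)=s\}$, one has $\Psi^{-1}(\Psi(r))\ge r$ possibly strictly, so the interface $|x|=\theta h(t)$ need not be attainable by choosing $t=1/\Psi(\theta/|x|)$. The paper sidesteps this by introducing $r(t):=[p_t(tb)]^{-1/d}$, a genuinely continuous surjection onto $(0,r(t_0)]$ that is shown to be comparable to $h(t)$ from the first two-sided bound in \textsc{(1.2)}, and then reading off \textsc{(1.1)}(b) from the two estimates at $|x|\asymp r(t)$.
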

\noindent
Both conditions (a) and (b) in \textsc{(1.1)} are local {in} the sense that they refer to {distinct} ranges of $x$. The proof of Theorem \ref{th:main1}, which consists of {two separate} parts, for small and large $x$, also reflects this property. In particular, the next result states that the condition \textsc{(1.1)} (b) in fact characterizes the bounds \textsc{(1.2)} for small $x$. It is worth to point out that under our assumptions the estimate opposite to \textsc{(1.1)}(b) always holds true (as a consequence of the first bound in \eqref{eq:Lm<Psi}).

\begin{twierdzenie}\label{th:main5} Let
$\nu(dx)=g(x)dx$ be a L\'evy measure such that $\nu(\Rdwithouto)=\infty$, $g(y)=g(-y)$ and {$g(x) \asymp f(|x|)$, $x \in \Rdwithouto$, for some nonincreasing function $f:\:(0,\infty)\to (0,\infty)$.} Moreover assume that $b \in \Rd$.

Then the following two conditions \textsc{(2.1)} and \textsc{(2.2)} are equivalent.
\begin{itemize}
\item[\textsc{(2.1)}] There exist $r_0>0$ and a constant $L_2 >0$ such that 
$$
\Psi(1/|x|) \leq L_2 |x|^d g(x), \quad |x| \leq 2r_0.
$$ 
\item[\textsc{(2.2)}] There exist $t_0, \theta, R>0$ such that $\theta h(t_0) \leq R$ and {constants} $C_1 - C_4$ such that for every $t \in (0,t_0]$ the transition densities $p_t$ exist and satisfy
\begin{align*}
  C_1 \, [h(t)]^{-d} \leq p_t(x+tb) \leq  C_3 \, [h(t)]^{-d}, \quad t \in (0,t_0], \quad |x| \leq \theta h(t),
\end{align*}
and
\begin{align*}
  C_2 \, t \, g(x) \leq p_t(x+tb) \leq  C_4 \, t \, g(x), \quad t \in (0,t_0], \quad \theta h(t) \leq |x| \leq R.
\end{align*}
\end{itemize}
\end{twierdzenie}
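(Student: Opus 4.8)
The plan is to establish the equivalence of the two local conditions concerning only small $x$ (equivalently, small time $t$), by reducing to the convolution machinery for L\'evy measures developed earlier in the paper. First I would normalize: since the drift only enters through the translation $x \mapsto x+tb$, and since on the regime $|x| \le R$ with $t$ small the shift $tb$ is negligible compared to both $h(t)$ and $|x|$ (and $g$ has the doubling property near zero inherited from $f$ being a nonincreasing profile with $\int (1\wedge |y|^2) g(y)\,dy<\infty$), we may assume $b=0$ and prove the bounds for $p_t(x)$ directly; the general case follows by absorbing the shift into the comparability constants. Throughout I would use the first bound in \eqref{eq:Lm<Psi} (the estimate $\Psi(1/|x|) \gtrsim |x|^d g(x)$ always holds), so that \textsc{(2.1)} is genuinely the nontrivial one-sided inequality.

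\emph{From \textsc{(2.1)} to \textsc{(2.2)}.} The strategy is the standard decomposition $\nu = \nu^{r} + \nu_r$ into the part restricted to $B(0,r)$ and its complement, with $r = h(t)$, yielding $P_t = e^{-t\nu_r(\R^d)} \sum_{k\ge 0} \frac{t^k}{k!} (\nu_r)^{*k} * P_t^{r}$, where $P_t^{r}$ is the semigroup generated by the small-jump part. For the small-$x$, on-diagonal bound $p_t(x) \asymp h(t)^{-d}$ one controls $P_t^{r}$ via its L\'evy--Khintchine exponent: the exponent of $\nu^{r}$ is comparable to $\Psi$ on the relevant range, so that $p_t^{r}(x) \asymp h(t)^{-d}$ for $|x| \le \theta h(t)$ by a Fourier-inversion / scaling argument (this is where one needs $\Psi$ to behave well, and condition \textsc{(2.1)} is \emph{not} needed for the upper on-diagonal bound). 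For the off-diagonal piece $\theta h(t) \le |x| \le R$, the dominant term is $k=1$, giving $p_t(x) \approx t (\nu_r * P_t^{r})(x) \asymp t g(x)$; the lower bound $t g(x) \lesssim p_t(x)$ follows by keeping just this term, and the upper bound requires summing the series — controlling $(\nu_r)^{*k}*P_t^{r}$ — which is exactly where \textsc{(2.1)}(b) is used: it guarantees $\nu_r(\R^d) = \nu(B(0,r)^c) \lesssim \Psi(1/r) = 1/t$, so the Poisson weights $t^k \nu_r(\R^d)^k/k!$ are summable with the right constant, and combined with a convolution estimate of the form $(\nu_r * \cdots)(x) \lesssim$ (number of jumps) $\times g(x)$ on the range $|x|\le R$ one closes the bound. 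I would invoke the precise convolution-of-L\'evy-measures estimates proved earlier in the paper for this last step.

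\emph{From \textsc{(2.2)} to \textsc{(2.1)}.} Here I would argue by comparing the two regimes at their common boundary $|x| \asymp h(t)$. Fix $x$ with $|x| \le 2r_0$ small, and choose $t = t(x)$ so that $h(t) \asymp |x|$, i.e. $1/t \asymp \Psi(1/|x|)$; this is possible for all small $|x|$ because $h$ is continuous and $h(t) \to 0$ as $t\to 0$. Then $x$ sits essentially on the interface between the two cases of \textsc{(2.2)}: from the on-diagonal bound $p_t(x) \le C_3 h(t)^{-d} \asymp |x|^{-d}$, while from the off-diagonal bound (applied at a comparable point just past the threshold, using the near-zero doubling of $g$ to move $x$ by a constant factor) $p_t(x) \gtrsim t g(x) \asymp \Psi(1/|x|)^{-1} g(x)$. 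Chaining these gives $\Psi(1/|x|)^{-1} g(x) \lesssim |x|^{-d}$, i.e. exactly $\Psi(1/|x|) \lesssim |x|^d g(x)$, which is \textsc{(2.1)}. One has to be slightly careful that $t(x) \le t_0$ and that the chosen comparison point stays in the allowed range $\theta h(t) \le |\,\cdot\,| \le R$; the hypothesis $\theta h(t_0) \le R$ in \textsc{(2.2)} is precisely what makes the off-diagonal regime nonempty down to the diagonal, so this matching is legitimate.

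The main obstacle I anticipate is the upper bound for large-ish $x$ (the range $\theta h(t) \le |x| \le R$) in the direction \textsc{(2.1)}$\Rightarrow$\textsc{(2.2)}: summing the perturbation series $\sum_k \frac{t^k}{k!}(\nu_r)^{*k}*P_t^{r}(x)$ and showing it is $\lesssim t g(x)$ rather than merely $\lesssim t g(c|x|)$ for some $c<1$. This needs the sharp convolution bound $g_{r} * g_{r}(x) \lesssim$ (something like $g(x)$ plus lower-order terms) \emph{uniformly on bounded $x$}, which is the genuinely delicate input; on the compact range $|x|\le R$ with a truncation scale $r = h(t) \to 0$ this should be manageable via the decomposition into "one jump large, the rest small'' contributions and the fact that $\nu$ restricted away from zero is a finite measure, but keeping the constant free of the spurious factor $c$ is where all the work lies, and it is precisely the point of the convolution estimates the paper establishes beforehand.
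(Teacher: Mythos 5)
Your high-level strategy is sound in outline (decompose into small and large jumps, sum the series, match at the interface to go backward), but both directions contain genuine problems.

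\textbf{Direction \textsc{(2.2)} $\Rightarrow$ \textsc{(2.1)}.} You chain the \emph{wrong} pair of inequalities. You take the on-diagonal upper bound $p_t(x+tb)\le C_3 h(t)^{-d}$ together with the off-diagonal lower bound $p_t(x+tb)\gtrsim t\,g(x)$, obtaining $t\,g(x)\lesssim h(t)^{-d}$, i.e. $g(x)\lesssim |x|^{-d}\Psi(1/|x|)$, which is $|x|^d g(x)\lesssim \Psi(1/|x|)$. This is the \emph{trivial} direction, which always holds by \eqref{eq:Lm<Psi} and \eqref{eq:from4}; it is \emph{not} \textsc{(2.1)}. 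Your final sentence contains the algebraic flip: from $\Psi(1/|x|)^{-1}g(x)\lesssim |x|^{-d}$ you conclude $\Psi(1/|x|)\lesssim |x|^d g(x)$, but rearranging actually gives $|x|^d g(x)\lesssim \Psi(1/|x|)$. To obtain \textsc{(2.1)}, you must combine the on-diagonal \emph{lower} bound and the off-diagonal \emph{upper} bound at the interface $|x|=\theta h(t)$, where both regimes overlap: $C_1 h(t)^{-d}\le p_t(x+tb)\le C_4\,t\,g(x)$ gives $h(t)^{-d}\lesssim f(\theta h(t))/\Psi(1/h(t))$, and this is what rearranges to $\Psi(1/r)\lesssim f(r)r^d$. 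The paper does exactly this (using an auxiliary function $r(t)=[p_t(tb)]^{-1/d}\asymp h(t)$ to control the range of $r$), so your boundary-matching idea is right but the execution has the bounds crossed.

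\textbf{Direction \textsc{(2.1)} $\Rightarrow$ \textsc{(2.2)}.} You propose to ``invoke the precise convolution-of-L\'evy-measures estimates proved earlier in the paper'' to control $\sum_k \frac{t^k}{k!}(\nu_{h(t)})^{*k}*\mathring{p}_t$. But Lemma \ref{lm:conv_est3} requires the convolution hypothesis \textbf{(C)}, which is \emph{not} assumed in Theorem \ref{th:main5}; on this compact range those sharp estimates are neither needed nor available under \textsc{(2.1)} alone. The paper's actual route is different and lighter: it uses \textsc{(2.1)} to deduce doubling of $f$ near $0$, extends $f$ to a globally doubling profile $f_0(r)=f(r)\vee f(r_0/2)$, and cites \cite[Theorems 1--2]{KSz1} as black boxes for both bounds. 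The genuinely new step is not a convolution estimate at all: it is absorbing the exponential-logarithmic correction term $G(|x|/h(t))$ (coming from the $k=0$, no-large-jump term) into $t\,h(t)^d f(|x|)$, which is done by deriving from \textsc{(2.1)} and the doubling of $\Psi$ in \eqref{eq:Lm<Psi} that $t\,h(t)^d f(|x|)\ge c (|x|/h(t))^{-d-\log_2 L_6}$. Your proposal misidentifies the bottleneck: on $\theta h(t)\le |x|\le R$ the convolution term is tame under doubling, and the $k=0$ Gaussian-type term is the one that needs \textsc{(2.1)} to be dominated by $t\,g(x)$.
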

\noindent
{For a class of isotropic unimodal L\'evy processes (i.e. $g(x)$ and $p_t(x)$ are assumed to be strictly radially nonincreasing functions), it was recently proved in \cite{BGR13}} that the estimates $p_t(x) \asymp [h(t)]^{-d} \wedge t \Psi(1/|x|)|x|^{-d}$ for small $t$ and small $x$ are equivalent to the property that {the Matuszewska indices at infinity \cite[p. 68]{BGT} of the corresponding L\'evy-Khintchine exponent lies strictly between $0$ and $2$}. As shown there, in this class of processes, the latter property yields \textsc{(2.1)}. This reformulation of the condition \textsc{(2.1)} in terms of the L\'evy-Khintchine exponent easily extends to our settings (see Lemma \ref{lem:getE}). However, in general, the functions $g(x)$ and $p_t(x)$, corresponding to convolution semigroups investigated in the present paper, are not radially nonincreasing. Therefore, the proof of Theorem \ref{th:main5} requires more general methods than those {available} for isotropic unimodal case. 

Due to possible applications, it is {useful} to point out that our both conditions \textsc{(2.1)} and \textsc{(2.2)} in fact imply {two}-sided bound in the minimum form $p_t(x+tb) \asymp [h(t)]^{-d} \wedge t g(x)$ (see further discussion in Proposition \ref{prop:main5} and Remark \ref{rem:ex01}).

The characterization of \textsc{(1.2)} (in fact, the second bound in \textsc{(1.2)}) for large $x$ in terms of the convolution condition \textsc{(1.1)} (a) is given in Theorem \ref{th:main3} below. This result can be seen as the key and main ingredient of Theorem \ref{th:main1}. It was obtained independently of Theorem \ref{th:main5} under the following regularity condition \textbf{(E)} on $\Phi$ which is essentially more general than \textsc{(1.1)} (b).

\begin{itemize}
\item[\textbf{(E)}] There exist a constant $L_0>0$ and $t_p>0$ such that
\begin{align*}
  \int_{\Rd} e^{-t\Re\left(\Phi(\xi)\right)}|\xi|\, d\xi \leq L_0 \left[h(t)\right]^{-d-1},
  \quad t\in (0,t_p]. 
\end{align*}
\end{itemize}
The condition \textbf{(E)} {not only gives} the existence of densities $p_t\in C^1_b(\Rd)$ for $t\in (0,t_p]$, but it also provides {necessary} regularity of the small jump part of the process (see Preliminaries). Here we investigate the small time properties of the densities $p_t$ and it is intuitively clear that the study of the second bound in \textsc{(1.2)} for large $x$ {should also} require some regularity of the L\'evy-Khintchine exponent $\Phi$ for large arguments. One can verify that if there {exists} $\alpha >0$, $r_0>0$ and a constant $C \in (0,1]$ such that 
\begin{align*} 
\Psi(\lambda r) \geq C \lambda^{\alpha} \Psi(r), \quad \lambda \geq 1, \quad r>r_0,
\end{align*}
then \textbf{(E)} holds for all $t \in (0,1/\Psi(r_0))$ (see Lemma \ref{lem:getE}). On the other hand, it is clear that \textbf{(E)} excludes {symbols} that {vary} slowly (e.g. logarythmically) at infinity. In this case the integral on the left hand side is not finite for small $t$. 

Theorem \ref{th:main3} below gives the characterization of the sharp small time bounds of densities for big spatial arguments in terms of the decay of convolution of the L\'evy measures at infinity. 

\begin{twierdzenie}\label{th:main3} Let
$\nu(dx)=g(x)dx$ be a L\'evy measure such that $\nu(\Rdwithouto)=\infty$, $g(y)=g(-y)$ and {$g(x) \asymp f(|x|)$, $x \in \Rdwithouto$, for some nonincreasing function $f:\:(0,\infty)\to (0,\infty)$.} Moreover, assume that $b \in \R^d$ and that $\textbf{(E)}$ holds with some $t_p>0$. 

Then the following two conditions \textsc{(3.1)} and \textsc{(3.2)} are equivalent.
\begin{itemize}
\item[\textsc{(3.1)}] There exist $r_0>0$ and a constant $L_1>0$ such that 
\begin{equation*}
  g_{r_0} * g_{r_0}(x) \leq L_1 \, g(x), \quad |x| \geq 2r_0.
\end{equation*}
\item[\textsc{(3.2)}] There exist $t_0 \in (0,t_p]$, $R>0$, and {constants} $C_2, C_4$ such that we have
\begin{align*}
  C_2 \, t \, g(x) \leq 
  p_t(x+tb) 
  & \leq  C_4 \, t \, g(x), \quad t \in (0,t_0], \ |x| \geq R.
\end{align*}
\end{itemize}
In particular, if \textsc{(3.1)} is true for some $r_0$, then \textsc{(3.2)} holds with $R=4r_0$ and $t_0:= t_p \wedge \frac{1}{\Psi(1/r_0)}$. If \textsc{(3.2)} is true for some $t_0$ and $R$, then \textsc{(3.1)} holds for $r_0=R/2$. 
\end{twierdzenie}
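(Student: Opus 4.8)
The plan is to prove the two implications separately. For $\textsc{(3.2)}\Rightarrow\textsc{(3.1)}$ I would use the semigroup identity $p_{t_0}=p_{t_0/2}*p_{t_0/2}$ (the densities exist by $\textsc{(3.2)}$ and are continuous since $t_0\le t_p$); substituting $v=y+\tfrac{t_0}{2}b$ and retaining only $|y|\ge R$, $|x-y|\ge R$,
\[
p_{t_0}(x+t_0b)=\int_{\Rd}p_{t_0/2}\bigl(y+\tfrac{t_0}{2}b\bigr)\,p_{t_0/2}\bigl((x-y)+\tfrac{t_0}{2}b\bigr)\,dy\ \ge\ \tfrac{C_2^2 t_0^2}{4}\,(g_R*g_R)(x),
\]
since $(g_R*g_R)(x)=\int_{|y|\ge R,\,|x-y|\ge R}g(y)g(x-y)\,dy$ by the definition of $g_R=g\,\indyk{B(0,R)^c}$. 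For $|x|\ge R$ the upper bound in $\textsc{(3.2)}$ gives $p_{t_0}(x+t_0b)\le C_4 t_0 g(x)$, hence $(g_R*g_R)(x)\le\tfrac{4C_4}{C_2^2 t_0}g(x)$ for $|x|\ge R$, i.e.\ $\textsc{(3.1)}$ with $r_0=R$; the sharper $r_0=R/2$ in the statement follows by peeling off the bounded shell $\{R/2\le|y|<R\}$ and invoking the bounded‑oscillation consequence of the $r_0=R$ case discussed next.

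The converse $\textsc{(3.1)}\Rightarrow\textsc{(3.2)}$ is where essentially all the work lies. I would first draw from $\textsc{(3.1)}$ two quantitative consequences, obtained by testing the convolution against small balls placed suitably along the segment $[0,x]$: a \emph{bounded oscillation} bound $f(\rho-r_0)\le C_0 f(\rho)$ for $\rho\ge 4r_0$ (some $C_0\ge 1$), and, by iterating it down to a fixed scale, an \emph{at most exponential} lower estimate $g(x)\ge c\,e^{-\gamma|x|}$ for large $|x|$ with $\gamma=(\log C_0)/r_0$ — the latter is exactly where the announced dichotomy enters, since a faster‑than‑exponential tail of $\nu$ is incompatible with $\textsc{(3.1)}$. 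Next I would decompose the semigroup at level $r_0$: write $\nu=\nu^{r_0}+\nu_{r_0}$ with $\nu^{r_0}=\nu|_{B(0,r_0)}$, $\nu_{r_0}=g_{r_0}(y)\,dy$, $\Lambda=\nu_{r_0}(\Rdwithouto)<\infty$, so that $P_t=\tilde P_t*Q_t$, where $\tilde P_t$ is the convolution semigroup with L\'evy measure $\nu^{r_0}$ and drift $b_{r_0}$ and $Q_t=e^{-\Lambda t}\sum_{n\ge 0}\tfrac{t^n}{n!}\nu_{r_0}^{*n}$. Since $\Re\tilde\Phi\ge\Re\Phi-2\Lambda$, condition \textbf{(E)} implies that $\tilde p_t$ exists, lies in $C^1_b(\Rd)$ for $t\le t_p$, obeys $\|\tilde p_t\|_\infty\lesssim h(t)^{-d}$ (splitting the inversion integral at $|\xi|=1/h(t)$) together with the gradient bound of \textbf{(E)}, and — its jumps being bounded by $r_0$ — has a super‑exponentially light tail $\tilde P_t(B(0,\rho)^c)\le C\exp\bigl(-\tfrac{\rho}{r_0}\log\tfrac{c\rho r_0}{tM_2}\bigr)$, $\rho\ge r_0$, with $M_2=\int_{|y|<r_0}|y|^2\nu(dy)\lesssim r_0^2\Psi(1/r_0)$. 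Everything is then read off from
\[
p_t(x+tb)=e^{-\Lambda t}\sum_{n\ge 0}\frac{t^n}{n!}\,(\tilde p_t*\nu_{r_0}^{*n})(x+tb),
\]
for $|x|\ge R=4r_0$ and $t\le t_0:=t_p\wedge\tfrac{1}{\Psi(1/r_0)}$, the $O(t)$ discrepancy between $b$ and $b_{r_0}$ being absorbed by the shift.

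For the upper bound the decisive term is $n=1$, $(\tilde p_t*\nu_{r_0})(x+tb)=\int_{|w|\ge r_0}\tilde p_t(x+tb-w)g(w)\,dw$, which I would split according to $|x-w|$. On $|x-w|<r_0$ one has $|w|\in(|x|-r_0,|x|+r_0)$, so by bounded oscillation $g(w)\le cC_0 f(|x|)\lesssim g(x)$ while $\int_{\Rd}\tilde p_t=1$. On $r_0\le|x-w|\le|x|/2$, writing $z=x-w$ and iterating bounded oscillation, $g(w)\le c\,C_0^{|z|/r_0}f(|x|)$; since $t\le 1/\Psi(1/r_0)$ forces $\tfrac{|z|r_0}{tM_2}\gtrsim\tfrac{|z|}{r_0}$, the product $C_0^{|z|/r_0}\tilde p_t(z)\le C_0^{|z|/r_0}\exp(-\tfrac{|z|}{r_0}\log\tfrac{c|z|}{r_0})$ is integrable in $z$ with an $(x,t)$‑independent bound, so this range contributes $\lesssim f(|x|)\lesssim g(x)$. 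On $|x-w|>|x|/2$ one uses the crude $g(w)\le cf(r_0)$ against $\tilde P_t(B(0,|x|/2)^c)$, which beats $g(x)\gtrsim e^{-\gamma|x|}$ for $|x|\ge 4r_0$, $t\le 1/\Psi(1/r_0)$; the $n=0$ term is negligible for the same reason. For $n\ge 2$ I would first establish the \emph{precise convolution bound} $g_{r_0}^{*n}(u)\le B^n g(u)$ for $|u|\ge 2r_0$ with a fixed $B$, by induction from $\textsc{(3.1)}$ and from $\|g_{r_0}^{*n}\|_\infty\le cf(r_0)\Lambda^{n-1}$; repeating the $n=1$ estimate bounds $(\tilde p_t*\nu_{r_0}^{*n})(x+tb)$ by $CB^n g(x)$ plus a super‑small term, and since $\sum_{n\ge 2}\tfrac{t^n}{n!}B^n\lesssim t^2$ these sum to at most $\tfrac12 t g(x)$ for small $t$, whence $p_t(x+tb)\le C_4 t g(x)$. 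For the lower bound one keeps only $n=1$ restricted to $|x+tb-w|<r_0/2$ (which forces $r_0<|w|<|x|+r_0$, so $g(w)\ge c'f(|w|)\ge\tfrac{c'}{C_0}f(|x|)$ by bounded oscillation), getting $p_t(x+tb)\ge e^{-\Lambda t_0}\,t\,\tfrac{c'}{C_0}f(|x|)\,\tilde P_t(B(0,r_0/2))\ge C_2 t g(x)$, since $\tilde P_t(B(0,r_0/2))$ stays bounded below for $t\le t_0$.

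The step I expect to be the main obstacle is the off‑diagonal part of the $n=1$ upper bound: one must prove quantitatively that the small‑jump density $\tilde p_t$ cannot transport the mass of a single big jump ``uphill'' against the decay of $g$. This is exactly where $\textsc{(3.1)}$ is used twice — through the at‑most‑exponential lower bound on $g$ and through $f(\rho-r_0)\le C_0 f(\rho)$ — and where the precise values $R=4r_0$ and $t_0=t_p\wedge\tfrac{1}{\Psi(1/r_0)}$ emerge from balancing the two competing exponential rates; on the algebraic side, the technical heart is the inductive proof of $g_{r_0}^{*n}\le B^n g$ for large arguments starting from the single inequality $\textsc{(3.1)}$.
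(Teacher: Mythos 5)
Your derivation of \textsc{(3.2)}$\Rightarrow$\textsc{(3.1)} via the Chapman--Kolmogorov identity is the same as the paper's (the paper writes $r_0=R/2$ at once and uses both bounds of \textsc{(3.2)} in the same display), so that direction is fine. For \textsc{(3.1)}$\Rightarrow$\textsc{(3.2)} you take a genuinely different route: the paper cuts the L\'evy measure at the time-dependent scale $h(t)$, proves the weighted convolution bounds of Lemma~\ref{lm:conv_est3} with the factor $\Psi(1/r)^{n-1}$, and reads off the upper bound from Theorem~\ref{th:main2}; you instead cut at the \emph{fixed} scale $r_0$, prove the simpler statement $g_{r_0}^{*n}(u)\le B^n g(u)$ for $|u|\ge 2r_0$, and use the fact that the remaining mass $\Lambda=\nu(B(0,r_0)^c)$ is finite. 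The lower bound argument you sketch (retain $n=1$ and use $\tilde P_t(B(0,r_0/2))\ge c$) is also a reasonable self-contained replacement for the paper's appeal to \cite[Theorem~2]{KSz1}. This fixed-scale decomposition is attractive and would simplify the combinatorics, but it hits a genuine obstruction at one point.

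The problem is the $n=0$ term $e^{-\Lambda t}\tilde p_t(x+tb)$ (and it is hidden, not removed, by the phrase ``negligible for the same reason''). The ``same reason'' refers to the off-diagonal $n=1$ estimate, where you bound a \emph{tail probability} $\tilde P_t(B(0,|x|/2)^c)$; but the $n=0$ term is a \emph{pointwise density}, and the only way your sketch produces a pointwise bound is
\[
\tilde p_t(x)\le 2\,\|\tilde p_{t/2}\|_\infty\,\tilde P_{t/2}\bigl(B(0,|x|/2)^c\bigr)\lesssim h(t)^{-d}\,\bigl(ct\Psi(1/r_0)\bigr)^{|x|/(2r_0)}.
\]
Comparing with the target $tg(x)\gtrsim t\,e^{-\gamma|x|}$ at $|x|=4r_0$ forces $h(t)^{-d}t\lesssim 1$, i.e.\ $\Psi(s)\gtrsim s^d$ at infinity. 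Since always $\Psi(s)\lesssim s^2$, this fails for every $d\ge 3$ with, say, $\alpha$-stable-type small jumps ($\alpha<d$) --- a regime fully allowed by \textbf{(E)}. The same confusion between tail probability and density appears in your middle-range estimate ``$C_0^{|z|/r_0}\tilde p_t(z)\le C_0^{|z|/r_0}\exp(-\tfrac{|z|}{r_0}\log\tfrac{c|z|}{r_0})$''; there the error is harmless because you ultimately integrate in $z$ (a layer-cake argument against the tail does close that step), but for $n=0$ there is no integral to hide behind. The root cause is that, after truncating at the \emph{fixed} scale $r_0$, the small-jump density $\tilde p_t$ at distance $\sim r_0$ is no longer super-polynomially small in $t$ relative to $h(t)^{-d}$, whereas the paper's truncation at $h(t)$ yields $\mathring p_t(x)\le C h(t)^{-d}G(|x|/h(t))$ with $G(s)=e^{-C_7 s\log(1+C_8 s)}$, and $G(r_0/h(t))$ decays faster than any power of $t$. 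To repair your argument you would need a pointwise estimate of $\tilde p_t$ at distance $\ge 4r_0$ that is itself $\lesssim t g(x)$; the natural way to get one is to further decompose $\tilde p_t=\mathring p^{h(t)}_t*\bar{\tilde P}_t$ and invoke precisely the $h(t)$-scale bound --- at which point you are essentially running the paper's proof.

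Two small remarks. First, the ``bounded oscillation'' constant $C_0$ and the exponential lower bound $g(x)\gtrsim e^{-\gamma|x|}$ that you extract from \textsc{(3.1)} are exactly the content of Lemma~\ref{lm:useful}(a) (and implicitly (b)) in the paper; your derivation is correct. Second, the theorem's value $t_0=t_p\wedge\tfrac1{\Psi(1/r_0)}$ is tight in the paper's proof; in your off-diagonal $n\ge 1$ estimates the requirement that $ct\Psi(1/r_0)$ be small compared with $e^{-\gamma r_0}$ forces, as written, a strictly smaller $t_0$ --- a minor but real loss that disappears in the paper's scheme because the competing rate there is $G(r_0/(2h(t)))$, not a power of $t$.
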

\noindent
Note that Theorems \ref{th:main1} and \ref{th:main5} do not require {a priori to assume \textbf{(E)}}. Indeed, any of the equivalent conditions \textsc{(2.1)} and \textsc{(2.2)} (respectively \textsc{(1.1)} (b) and \textsc{(1.2)} for small $x$) {imply} that the L\'evy-Khintchine exponent $\Phi$ satisfies \textbf{(E)} (Lemma \ref{lem:getE}). In fact, the condition \textbf{(E)} is more general and covers an essentially larger class of semigroups than \textsc{(2.1)} (cf. Examples \ref{ex:ex0} and \ref{ex:ex2}(2)). In particular, the statement of Theorem \ref{th:main3} and the argument in its proof are completely independent of Theorem \ref{th:main5} and bounds \textsc{(2.1)}-\textsc{(2.2)}.

Theorem \ref{th:main3} determines when exactly the sharp two-sided bounds as in \eqref{eq:est2} (with $c_1 = c_2 =1$) are satisfied for large $x$. In particular, it shows that such bounds hold for a large class of symmetric tempered L\'evy processes for which they were not known before. Here the most interesting examples include processes {whose} jump intensities {are} exponentially and suboexponentially localized at infinity, even if the intensities of small jumps are remarkably different from {the} stable one, whenever the regularity condition \textbf{(E)} is satisfied (see Corollary \ref{cor:ex2} and Example \ref{ex:ex2}). Such tempered processes are important from the mathematical physics point of view (see e.g. \cite{CMS}) and, as we will see in the sequel, they seem to be quite interesting in the present context. For instance, if we consider the class of L\'evy processes with L\'evy measures $\nu(dx) = g(x)dx$ such that $g(x)=g(-x) \asymp |x|^{-\delta} e^{-m|x|}$, $\delta \geq 0$, $m>0$, for large $x$ (this covers important families of tempered L\'evy processes such as {the} relativistic stable or {the} Lamperti ones), then Proposition \ref{prop:convver} states that the convolution condition \textsc{(3.1)} holds true only exactly in two cases, when $\beta \in (0,1)$ and $\delta \geq 0$ or when $\beta =1$ and $\delta > (d+1)/2$. Theorem \ref{th:main3} (see also Corollary \ref{cor:ex2}) thus immediately settles that the two sided sharp bounds of the form \textsc{(3.2)} are satisfied for these two ranges of parameters only. In particular, they cannot hold when $\beta>1$ or when $\beta=1$ and $\delta \in (0,(d+1)/2)$. This {somewhat} surprising dichotomy property was not known before (see further discussion in Example \ref{ex:ex3}).

The study of the small time bounds in Theorem \ref{th:main5} and the lower bound in Theorem \ref{th:main3} is based on an application  of the results obtained recently in \cite{KSz1} and on some new tricky ideas. However, the most critical part of the paper is the proof of the upper bound in Theorem \ref{th:main3}. In fact, the primary motivation of our investigations was to understand and explain when exactly the upper bound as in \textsc{(3.2)} can be expected to hold, and how it can be described by the detail{ed} and direct properties of the corresponding L\'evy measure. The answer we give is that it is enough to know how fast the tail (or rather profile) of a single convolution of the Levy {measure} (restricted to the complement of some neighborhood of the origin) decays at infinity. This fits very well the convolutional structure of the semigroup $\left\{P_t: t \geq 0\right\}$, but it is a little unexpected that under the condition \textsc{(3.1)} the decay properties of all $n$-th convolutions of L\'evy measures appearing in the construction {are} decided exactly by the decay of the first one (we briefly recall the construction in Preliminaries). Note that the condition 
\textsc{(3.1)} has been recently discovered in \cite{KL} in a completely different context as a powerful tool to study the estimates of the eigenfunctions and some ultracontractivity properties of the Feynman-Kac semigroups for L\'evy processes. Moreover, similar convolution conditions, especially for the tails of measures, have been widely studied on the real line and the halfline in the context of various types of subexponentiality. It is known for many years that these properties play an important role in the study the relation between one-dimensional infinitely divisible distributions and their L\'evy measures (see e.g. \cite{W05, W10} and references therein). We would also like to mention that we {have} obtained recently in \cite{KSz} the upper bound for densities of Feller semigroups with jump kernels absolutely continuous with respect to the Lebesgue measure with densities that are dominated by some radial functions satisfying {a} condition as in \textsc{(3.1)}. However, the argument in this paper requires some additional smoothness of the majorizing functions and the additional regularity of the intensity of small jumps, which is in fact assumed to be of the stable type. 

The second important question we address in the present paper is {concerned with} the generality in which such type of convolution condition on L\'evy measure implies the sharp small time upper bound of the corresponding density $p_t(x)$ for large $x$, similar to \textsc{(3.2)}. We show that it holds true in {a} much more general case, extending far beyond the settings of Theorems \ref{th:main1} and \ref{th:main3}. Below we proceed in the general framework, {used also} in our recent paper \cite{KSz1}. We consider a large class of L\'evy measures satisfying the following localization (domination) condition from {the} above.

\begin{itemize}
\item[\textbf{(D)}] There exist a nonincreasing function $f:(0,\infty) \to (0,\infty)$, a parameter $\gamma \in [0,d]$, and a constant $L_3>0$ such that
\begin{align*}
 \nu(A) \leq L_3 f(\dist(A,0)) (\diam(A))^{\gamma},
\end{align*}
for every $A \in \Borel$ with $\dist(A,0)>0$. 
\end{itemize}
Here $\diam(A)$ is the diameter, $\dist(A,0)$ is the distance to $0$ of the set $A \subset \R^d$, and ${\mathcal{B}}(\Rd )$ denotes the Borel sets in $\Rd$. For comparison, in \cite{Knop13} the author considers a different type of a localization condition, which is based on the estimate of the tail of the L\'evy measure by the tail of some multidimensional (subexponential) distribution. Note that the condition \textbf{(D)} covers a large class of symmetric and asymmetric L\'evy measures which are not absolutely continuous with respect to {Lebesgue  measure}, including some product and discrete L\'evy measures and those with tails very fast decaying at infinity. 

Under the following convolution condition {\bf (C)}, naturally generalizing \textsc{(3.1)}, we obtain the sharpest possible upper bound of $p_t(x)$ for small $t$ and large $x$, which can be given by {making use of the} majorant $f$ satisfying the localization condition \textbf{(D)}.

\begin{itemize}
\item[\textbf{(C)}] There exist {constants} $L_1, L_4>0$ and $r_0>0$ such that for every $|x| \geq 2 r_0$ and $r \in (0,r_0]$
\begin{align*}
  \int_{|x-y|>r_0,\ |y|>r} f(|y-x|) \,\nu(dy) 
  \leq 
  L_1 \Psi(1/r) \ f(|x|) \quad \text{and} \quad f(r) \leq L_4 \Psi(1/r)r^{-\gamma},
\end{align*}
with $f$ and $\gamma$ given by the domination condition {\bf (D)}. 
\end{itemize}

\begin{twierdzenie}\label{th:main2} Let
$\nu$ be a L\'evy measure such that $\nu(\Rdwithouto)=\infty$ and let the assumptions \textbf{(E)}, \textbf{(D)} and \textbf{(C)} be satisfied for some $t_p>0$, the function $f$, the parameter $\gamma$ and some $r_0 >0$. Then there is a constant $C_5>0$ such that 
\begin{align*}
  p_t(x+tb_{h(t)})
  & \leq  C_5 \, t \, [h(t)]^{\gamma-d}\, f(|x|), \quad |x| > 4r_0, \ t \in (0,t_0],
\end{align*}
where $t_0:= t_p \wedge \frac{1}{\Psi(1/r_0)}$, and $b_r$ is given by \eqref{eq:def_br}.
\end{twierdzenie}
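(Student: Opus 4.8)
\textbf{Proof proposal for Theorem \ref{th:main2}.}

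The plan is to exploit the convolutional construction of the semigroup $\{P_t:t\geq 0\}$ recalled in the Preliminaries: for a fixed cutoff radius $r\in(0,r_0]$ one splits $\nu=\nu|_{B(0,r)}+\nu|_{B(0,r)^c}$, and writes $P_t=P_t^{(1)}*P_t^{(2)}$, where $P_t^{(1)}$ is the convolution semigroup generated by the (compound Poisson) large-jump part $\nu_r:=\nu|_{B(0,r)^c}$, and $P_t^{(2)}$ absorbs the compensated small-jump part together with the drift. The natural choice is $r=h(t)$, so that $t\,\nu_r(\R^d)\asymp t\,\Psi(1/h(t))\asymp 1$ for $t$ small, and $P_t^{(1)}=e^{-t\nu_r(\R^d)}\sum_{n\geq 0}\frac{t^n}{n!}\nu_r^{*n}$ is a genuine, summable expansion. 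On the other side, condition \textbf{(E)} gives that $P_t^{(2)}$ has a density $\phi_t\in C_b^1$ with $\|\phi_t\|_\infty\lesssim [h(t)]^{-d}$ and with good control of its $L^1$-mass away from the origin (this is the role \textbf{(E)} plays — controlling the small-jump part), so that $p_{t}(x+tb_{h(t)})=\int \phi_t^{\,\sharp}(x-y)\,P_t^{(1)}(dy)$ for a suitably recentered $\phi_t^{\,\sharp}$.

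The heart of the argument is then to estimate, for $|x|>4r_0$,
\begin{equation*}
  p_t(x+tb_{h(t)})\leq e^{-t\nu_r(\R^d)}\sum_{n=0}^{\infty}\frac{t^n}{n!}\,\bigl(\phi_t^{\,\sharp}*\nu_r^{*n}\bigr)(x),
\end{equation*}
and to show each term is $\lesssim t\,[h(t)]^{\gamma-d}f(|x|)$ times a summable-in-$n$ factor. The $n=0$ term is $\phi_t^{\,\sharp}(x)$, which is small because $x$ is far from the effective support of $\phi_t^{\,\sharp}$; here one uses the $C^1_b$-bound from \textbf{(E)} together with \textbf{(D)} and the second inequality in \textbf{(C)} (i.e. $f(h(t))\lesssim \Psi(1/h(t))[h(t)]^{-\gamma}\asymp t^{-1}[h(t)]^{-\gamma}$) to trade the crude bound $[h(t)]^{-d}$ for $t[h(t)]^{\gamma-d}f(|x|)$. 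For $n\geq 1$ the key is an iterated application of the first inequality in \textbf{(C)}: starting from $\nu_r$ with $r=h(t)$, the bound $\int_{|x-y|>r_0,\,|y|>r} f(|x-y|)\,\nu(dy)\leq L_1\Psi(1/r)f(|x|)$ says precisely that convolving the majorant $f$ against $\nu_r$ (on the relevant region) reproduces $f$ at the cost of a factor $\Psi(1/h(t))\asymp t^{-1}$. Using \textbf{(D)} to majorize $\phi_t^{\,\sharp}*\nu_r$ by (a constant times) $t\,[h(t)]^{\gamma-d} f$ near the "first jump", and then applying \textbf{(C)} $n-1$ more times, one gets $\frac{t^n}{n!}(\phi_t^{\,\sharp}*\nu_r^{*n})(x)\lesssim \frac{(C L_1)^{n-1}}{n!}\,\bigl(t\Psi(1/h(t))\bigr)^{n-1}\cdot t[h(t)]^{\gamma-d}f(|x|)$; since $t\Psi(1/h(t))\asymp 1$, the series $\sum_n \frac{(CL_1)^{n-1}}{n!}$ converges, and multiplying by $e^{-t\nu_r(\R^d)}\leq 1$ finishes the bound. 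Some care is needed with the geometry: one must everywhere keep the "distance $>r_0$" restrictions so that \textbf{(C)} applies, splitting off the regions where the running spatial argument comes within $r_0$ of the origin (there one instead uses the $L^\infty$-bound on $\phi_t$ and boundedness of $\nu_r$-mass), and one must pass from set-function estimates in \textbf{(D)} to pointwise majorants for the density-valued convolutions by a standard covering/dyadic-annuli argument. The drift recentering by $b_{h(t)}$ is exactly what makes the compensated small-jump density $\phi_t^{\,\sharp}$ concentrated near the origin so that the $n=0$ term is negligible.

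The main obstacle I expect is the bookkeeping in the inductive convolution estimate: making the iteration of \textbf{(C)} rigorous requires a clean statement of the form "if $u\leq A\,f$ on $\{|x|>\rho\}$ and $u$ is controlled on $\{|x|\leq\rho\}$, then $(u*\nu_r)(x)\leq A'\,\Psi(1/r)f(|x|)$ on $\{|x|>\rho'\}$" with constants and radii tracked so that $\rho,\rho'$ do not drift beyond $r_0$-scale over the $n$ steps — and one needs the monotonicity of $f$ plus \textbf{(D)} to handle the near-origin contributions uniformly in $n$. The triangle-inequality splittings $\{|y|\leq |x|/2\}\cup\{|x-y|\leq |x|/2\}$, combined with $f$ nonincreasing, are the routine tool, but verifying that the accumulated constant is genuinely $(CL_1)^n/n!$ rather than something non-summable is the delicate point; this is where the factorial from the Poisson weights must be used rather than discarded.
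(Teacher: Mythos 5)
Your proposal follows essentially the same route as the paper: split $\nu$ at radius $h(t)$, expand the large-jump compound Poisson factor $\bar P_t$ in its exponential series, iterate the first inequality of \textbf{(C)} to show that convolution against $\nubounded{r}^{n*}$ reproduces $f$ at the cost of a factor $\Psi(1/r)^n$ (the paper's Lemma~\ref{lm:conv_est3}), use the second inequality of \textbf{(C)} together with \textbf{(D)} and the super-logarithmic decay of the small-jump kernel supplied by \textbf{(E)} via \cite[Lemma~8]{KSz1} to handle the near-$x$ contribution, and let the Poisson $1/n!$ make the series summable. The only organisational difference is ordering: the paper first sums the series into measure estimates on $\bar P_t$ (Lemma~\ref{lm:conv_est4}) and then splits the single integral $\int G(|y-x|/h(t))\,\bar P_t(dy)$ at $|y-x|=r_0$ with a layer-cake argument, rather than estimating $\phi_t^{\,\sharp}*\nu_r^{*n}$ term by term; the bookkeeping you flag as the delicate point is exactly what Lemma~\ref{lm:conv_est3} resolves.
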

\noindent
The proof of Theorem \ref{th:main2} is critical for the whole paper. Its key argument are some sharp estimates of the $n$-th convolutions of restricted L\'evy measures (Lemma \ref{lm:conv_est3}) which are based on our new convolution condition {\bf (C)} and were not known before. 
One can check that under the assumptions of Theorem \ref{th:main3} both conditions {\bf (D)} and {\bf (C)} hold with $\gamma=d$ and $f$ being the profile of the density $g$. In this case, the convolution condition {\bf (C)} directly reduces to {the} assumption \textsc{(3.1)} (see Lemma \ref{lm:useful_2}). Theorem \ref{th:main3} is thus {a} direct corollary {of} Theorem \ref{th:main2}. Note that in light of the general property in \eqref{eq:Lm<Psi} below, the second inequality in {\bf (C)} is only a technical assumption saying that the profile $f$ is not too rough around zero.

We close the introduction by a brief discussion of the sharpness of our new convolution assumption in {\bf (C)} {as} compared {with} the condition \textbf{(P)} introduced recently in \cite{KSz1}, together with {\bf (D)} as a key assumption to study the upper bound for transition densities. 

\begin{itemize}
\item[\textbf{(P)}] 
There exists a constant $M>0$ such that 
\begin{align*}
 \int_{|y|>r} f\left(s\vee |y|-|y|/2 \right) \,\nu(dy) 
  \leq 
  M f(s) \Psi(1/r),\quad s>0,r>0,
\end{align*}
with $f$ given by \textbf{(D)}.
\end{itemize}
\noindent
Note that the structure of the condition \textbf{(P)} is much more isotropic than that of \textbf{(C)}, and therefore it is often more convenient to check. Under \textbf{(D)}, the condition \textbf{(P)} allowed us to get the result (see \cite[Theorem 1]{KSz1}) which, in particular, imply the upper bound as in Theorem \ref{th:main2}, but with rate $f(|x|/4)$ instead of $f(|x|)$, and with some additional exponentially-logarithmic correction term. At the {time when} the paper \cite{KSz1} {was written}, it was completely unclear whether the sharpest possible upper bound with $f(|x|)$ {could} be obtained under the condition \textbf{(P)}. In Proposition \ref{prop:rel}, although both conditions have completely different structure, we prove that \textbf{(C)} always implies the inequality in \textbf{(P)} for large $s$ and small $r$, but the converse implications is not true. This in fact means that the condition \textbf{(P)} is too weak to guarantee the optimal rate in the estimate of $p_t(x)$ for small $t$ and big $x$ in general. More precisely, it holds for a larger class than the convolution condition \textbf{(C)} and give some bounds for densities, but it cannot be used to derive the sharp bound as in Theorem \ref{th:main2} with the exact rate $f(|x|)$ imposed by the localization condition \textbf{(D)}. This is illustrated by Example \ref{ex:ex3}. 

The structure of the paper is as follows. In Preliminaries we collect all {the} facts needed in the sequel and briefly recall the construction of the semigroup $\left\{P_t:t \geq 0\right\}$. Based on that, we precisely explain what is the main object of our study in this paper. In Section \ref{sec:conv} we investigate the consequences of the condition $\textbf{(C)}$ and estimate the convolutions of L\'evy measures. In Section \ref{sec:proofs1} we prove Theorems \ref{th:main3} and \ref{th:main2} involving the bounds for $p_t(x)$ for large $x$. Section \ref{sec:proofs2} {is concerned with} small time bounds for small $x$. It includes the proof of Theorem \ref{th:main5}, the discussion of further implications, and formal proof of Theorem \ref{th:main1}. In Section \ref{sec:examples} we illustrate our results by various examples, including two less regular cases (Examples \ref{ex:ex1} and \ref{ex:ex11}), and {we} discuss the convolution condition with respect to some typical profiles of L\'evy densities (Proposition \ref{prop:convver} and Corollary \ref{cor:ex2}). In Subsection 6.3, we also illustrate the sharpness of our convolution condition $\textbf{(C)}$ {as} compared with $\textbf{(P)}$.


\section{Preliminaries}

We use $c,C,L$ (with subscripts) and $M$ to denote finite positive constants
which may depend only on $\nu$, $b$, and the dimension $d$. Any {\it additional} dependence
is explicitly indicated by writing, e.g., $c=c(n)$.
We write $f(x)\asymp g(x)$ whenever there is a constant $c$ such that $c^{-1}f(x) \leq g(x) \leq c f(x)$.

We will need the following preparation. As usual we divide the L\'evy measure in two parts. For $r>0$ we denote 
$$\mathring{\nu}_r(dy)=\indyk{B(0,r)}(y)\nu(dy) \quad \text{and} \quad \nubounded{r}(dy)= \indyk{B(0,r)^c}(y)\,\nu(dy).$$
In terms of the corresponding L\'evy process, $\mathring{\nu}_r$ is related to the jumps which are close to the origin, while $\nubounded{r}$ represents the large jumps.
Note that there exist constants $L_5, L_6$ such that for every $r>0$
\begin{equation}\label{eq:Lm<Psi}
  |\nubounded{r}| \leq L_5\, \Psi(1/r) \quad \text{and}\quad \Psi(2r) \leq L_6 \Psi(r),
\end{equation} 
which follows from \cite[Proposition 1]{KSz1} or \cite{Grz2013}. 

We now briefly recall the construction of the semigroup $\{P_t,\; t\geq 0\}$. For the restricted L\'evy measures we consider two semigroups of measures $\{\mathring{P}^r_t,\; t\geq 0\}$ and $\{\bar{P}^r_t,\; t\geq 0\}$ such that
$$
  \Fourier(\mathring{P}^r_t)(\xi) 
    =     \exp\left(t \int_{\Rdwithouto} \left(e^{i\scalp{\xi}{y}}-1-i\scalp{\xi}{y}\right)
            \mathring{\nu}_r(dy)\right)\, ,  \quad \xi\in\Rd\,,
$$
and
\begin{equation*}
  \Fourier(\bar{P}^r_t)(\xi) =
  \exp\left(t \int (e^{i\scalp{\xi}{y}}-1)\,
  \nubounded{r}(dy)\right)\, ,
  \quad \xi\in\Rd\, ,
\end{equation*}
respectively. We have
\begin{eqnarray*}
  |\Fourier(\mathring{P}^r_t)(\xi)| 
  &   =  & \exp\left(-t\int_{0<|y|<r}
           (1-\cos(\scalp{y}{\xi}))\,
           \nu(dy)\right) \nonumber \\
  &   =  & \exp\left(-t\left(\Re(\Phi(\xi))-\int_{|y|\geq r}
           (1-\cos(\scalp{y}{\xi}))\,
           \nu(dy)\right)\right) \nonumber \\
  & \leq & \exp(-t\Re(\Phi(\xi)))\exp(2t\nu(B(0,r)^c)),
           \quad \xi\in\Rd,
\end{eqnarray*}
and therefore by \textbf{(E)}, for every $r>0$ and $t\in (0,t_p]$ the measures $\mathring{P}^r_t$ are absolutely continuous with respect to
the Lebesgue measure with densities $\mathring{p}^r_t\in C^1_b(\Rd)$. 

We have 
\begin{align*} 
  P_t=\mathring{P}^r_t \ast \bar{P}^r_t \ast \delta_{t b_r}\,,\quad \ \text{and} \ \quad 
  p_t= \mathring{p}^r_t * \bar{P}^r_t\ast \delta_{t b_r} \,, \quad t>0,
\end{align*}
where $b_r$ is defined by (\ref{eq:def_br}), and 
\begin{eqnarray}\label{eq:exp}
  \bar{P}^r_t
  &  =  & \exp(t(\nubounded{r}-|\nubounded{r}|\delta_0)) =  \sum_{n=0}^\infty \frac{t^n\left(\nubounded{r}-|\nubounded{r}|\delta_0)\right)^{n*}}{n!} \\
  &  =  & e^{-t|\nubounded{r}|} \sum_{n=0}^\infty \frac{t^n\nubounded{r}^{n*}}{n!}\,,\quad t\geq 0\, . \nonumber
\end{eqnarray}
As usual, below we will use $\mathring{P}^r_t$, $\mathring{p}^r_t$ and $\bar{P}^r_t$ with $r=h(t)$ and
for simplification we will write $\mathring{P}_t=\mathring{P}^{h(t)}_t$, $\mathring{p}_t=\mathring{p}^{h(t)}_t$ and $\bar{P}_t=\bar{P}^{h(t)}_t$. As proven in \cite[Lemma 8]{KSz1}, if $\nu(\Rdwithouto)=\infty$ and \textbf{(E)} holds with $t_p>0$, then there exist constants $C_{6},C_{7}$ and $C_{8}$ such that
  \begin{equation*}
    \mathring{p}_t(x)\leq C_{6} \left[h(t) \right]^{-d} 
    \exp\left[  \frac{-C_{7}|x|}{h(t)}\log\left(1+\frac{C_{8}|x|}{h(t)}\right)\right],\quad t \in (0,t_p], \ x\in\Rd.
  \end{equation*}
Therefore, we always have 
\begin{align} \label{eq:basic_ineq}
p_t(x+tb_{h(t)}) = (\mathring{p}_t * \bar{P}_t)(x) \leq C_6 \left[h(t) \right]^{-d} \int_{\R^d} G(|y-x|/h(t)) \bar{P}_t(dy) ,\quad t \in (0,t_p], \ x\in\Rd,
\end{align}
with  
\begin{align*}
  G(s):=e^{-C_{7}s\log(1+C_{8}s)},\quad s\geq 0.
\end{align*}
\noindent
The main objective of the present paper is to find and study the precise estimates of convolutions $\nubounded{r}^{n*}$ and the measure $\bar{P}_t$ and, in consequence, also the optimal upper bound for the integral on the right hand side of \eqref{eq:basic_ineq} when $x$ is large. This will be achieved in the next two sections. 

\section{Convolutions of L\'evy measures} \label{sec:conv}

In this section we prove the sharp upper bounds for $n$-th convolutions of L\'evy measures, which are basic for our further investigations. 

First we discuss some decay properties of nonincreasing functions $f$ satisfying our new convolution condition \textbf{(C)}. They will be very important below.  

\begin{lemat}\label{lm:useful} 
Let $\nu$ be a L\'evy measure such that $\nu(\Rdwithouto)=\infty$ and let $f:(0,\infty) \to (0,\infty)$ be a nonincreasing function satisfying the first inequality in \textbf{(C)}. Then the following holds.
\begin{itemize}
\item[(a)] We have  
$$
f(s-r_0) \leq C_9 f(s), \quad s \geq 3r_0,
$$
with
\begin{align}
\label{eq:constC9}
C_9:= \inf\left\{c(x_0,\varepsilon) : 0 \neq x_0 \in \R^d, \ 0< \varepsilon<|x_0| \leq r_0/2 \right\} \geq 1, 
\end{align}
where
\begin{align*}
c(x_0,\varepsilon):= \left(\frac{L_1 \Psi(1/(|x_0|-\varepsilon))}{\nu(B(x_0,\varepsilon))}\right)^{\left\lceil  r_0/(|x_0|-\varepsilon)\right\rceil}.
\end{align*}
\item[(b)] There is a constant $C_{10}:=C_{10}(r_0)>0$ such that
$$
 G(s/(2r_0)) \leq C_{10} f(s), \quad s \geq r_0.
$$
\end{itemize} 
\end{lemat}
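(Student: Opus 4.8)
The plan is to prove (a) by a ``walk towards the origin'' iteration based on the first inequality in \textbf{(C)}, and then to deduce (b) from (a) by observing that (a) forces $f$ to decay no faster than exponentially, while the kernel $G$ decays super-exponentially.

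For (a), I would fix an admissible pair $(x_0,\varepsilon)$ in the definition of $C_9$ (so $0\neq x_0\in\R^d$, $0<\varepsilon<|x_0|\le r_0/2$) with $\nu(B(x_0,\varepsilon))>0$; such pairs exist since $\nu(\Rdwithouto)=\infty$ forces $\nu$ to charge every punctured neighbourhood of the origin, so $C_9<\infty$, while pairs with $\nu(B(x_0,\varepsilon))=0$ give $c(x_0,\varepsilon)=\infty$ and are irrelevant. It then suffices to show $f(s-r_0)\le c(x_0,\varepsilon)\,f(s)$ for all $s\ge 3r_0$, because taking the infimum over such pairs gives the assertion, and $C_9\ge 1$ follows from $f(s-r_0)\ge f(s)>0$. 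Write $\delta:=|x_0|-\varepsilon\in(0,r_0/2]$ and $n:=\lceil r_0/\delta\rceil$, so $(n-1)\delta<r_0\le n\delta$. The heart of the argument is a \emph{single-step estimate}: for every real $t\ge 2r_0$,
\[
f(t-\delta)\ \le\ \frac{L_1\,\Psi(1/\delta)}{\nu(B(x_0,\varepsilon))}\,f(t).
\]
To get it I would apply the first inequality in \textbf{(C)} at the point $x:=(t/|x_0|)\,x_0$, which is collinear with $x_0$ and satisfies $|x|=t\ge 2r_0$, choosing there $r:=\delta\in(0,r_0]$, and then restrict the domain of integration to the ball $B(x_0,\varepsilon)$. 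Since $|x_0|+\varepsilon<2|x_0|\le r_0$ and $t\ge 2r_0$, elementary triangle-inequality estimates give $B(x_0,\varepsilon)\subseteq\{|y|>\delta\}\cap\{|y-x|>r_0\}$ and, using collinearity, $|y-x|<|x-x_0|+\varepsilon=t-\delta$ for all $y\in B(x_0,\varepsilon)$; hence by monotonicity of $f$ the restricted integral is at least $\nu(B(x_0,\varepsilon))\,f(t-\delta)$, while \textbf{(C)} bounds it above by $L_1\Psi(1/\delta)f(t)$. Finally, iterating this estimate along $t=s,\,s-\delta,\dots,s-(n-1)\delta$ (each of which exceeds $2r_0$ when $s\ge 3r_0$, because $(n-1)\delta<r_0$) yields $f(s-n\delta)\le\bigl(L_1\Psi(1/\delta)/\nu(B(x_0,\varepsilon))\bigr)^{n}f(s)$, and since $s-r_0\ge s-n\delta$ and $f$ is nonincreasing this is exactly $f(s-r_0)\le c(x_0,\varepsilon)f(s)$.

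For (b), I would first iterate (a): rewriting it as $f(u)\le C_9 f(u+r_0)$ for $u\ge 2r_0$ and chaining yields $f(s)\ge f(2r_0)\,C_9^{-s/r_0}$ for all $s\ge 2r_0$, i.e.\ $f(s)\ge A\,e^{-\kappa s}$ with $A:=f(2r_0)>0$ and $\kappa:=(\log C_9)/r_0\ge 0$. Since $G$ is nonincreasing and $G(\sigma)=e^{-C_7\sigma\log(1+C_8\sigma)}$, for $s\ge 2r_0$
\[
\frac{G(s/(2r_0))}{f(s)}\ \le\ \frac{1}{f(2r_0)}\exp\!\Bigl(\tfrac{s}{r_0}\log C_9-\tfrac{C_7 s}{2r_0}\log\bigl(1+\tfrac{C_8 s}{2r_0}\bigr)\Bigr),
\]
and the right-hand side tends to $0$ as $s\to\infty$ (the logarithm in the exponent is unbounded), hence is bounded on $[2r_0,\infty)$; on the compact range $[r_0,2r_0]$ one bounds $G(s/(2r_0))\le G(1/2)$ and $f(s)\ge f(2r_0)>0$. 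Taking $C_{10}(r_0)$ to be the larger of the two resulting bounds finishes (b).

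The step I expect to need the most care is the geometry behind the single-step estimate: \textbf{(C)} must be tested at a point $x$ \emph{collinear} with $x_0$, so that $\sup_{y\in B(x_0,\varepsilon)}|y-x|$ equals precisely $t-\delta$ — for a generic $x$ with $|x|=t$ one only gets $|y-x|\le t+|x_0|+\varepsilon$, which is useless — while simultaneously the whole ball $B(x_0,\varepsilon)$ must remain inside both constraint sets $\{|y|>r\}$ and $\{|y-x|>r_0\}$ in \textbf{(C)}; both requirements hinge precisely on $|x_0|\le r_0/2$ (hence $|x_0|+\varepsilon<r_0$) and on $t\ge 2r_0$. The remaining points — tracking which iterates stay $\ge 2r_0$, the positivity of $\nu(B(x_0,\varepsilon))$, and open versus closed balls — are routine.
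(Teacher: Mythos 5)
Your proof is correct and follows essentially the same strategy as the paper: for (a), the single-step estimate obtained by testing \textbf{(C)} at a point collinear with $x_0$ and restricting the integral to $B(x_0,\varepsilon)$, then iterating; for (b), the observation that (a) forces $f$ to decay no faster than exponentially while $G$ decays super-exponentially. The only cosmetic difference is in (b), where the paper phrases the comparison as a contradiction argument at integer multiples of $r_0$, whereas you derive the exponential lower bound $f(s)\geq f(2r_0)C_9^{-s/r_0}$ explicitly and compare directly with $G$ — a slightly more transparent way to present the same idea.
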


\begin{proof}
To prove (a) first observe that by the assumption $\nu(\Rdwithouto)=\infty$, there is $0 \neq x_0 \in \R^d$ and $0<\varepsilon$ such that $\varepsilon<|x_0| \leq r_0/2$ and $\nu(B(x_0,\varepsilon))>0$. Let $r_{\varepsilon}:= |x_0|-\varepsilon$. For $s \geq 2r_0$ let $\R^d \ni x_s:=(s/|x_0|)x_0$. For all $s \geq 2r_0$, by monotonicity of $f$ and \textbf{(C)}, we have 
\begin{align*}
L_1 \Psi(1/r_{\varepsilon}) f(s) & = L_1 \Psi(1/r_{\varepsilon}) f(|x_s|) \geq \int_{|x_s-y|>r_0,\ |y|>r_{\varepsilon}} f(|y-x_s|) \,\nu(dy) \\
& \geq \int_{B(x_0,\varepsilon)} f(|y-x_s|) \,\nu(dy) 
 \geq f(|x_s|-|x_0|+\varepsilon) \nu(B(x_0,\varepsilon)) \\ & = \nu(B(x_0,\varepsilon)) f(s-r_{\varepsilon}),
\end{align*}
which gives $f(s-r_{\varepsilon}) \leq (L_1 \Psi(1/r_{\varepsilon}))/\nu(B(x_0,\varepsilon)) f(s)$, for all $s \geq 2r_0$. The inequality in (a) follows from this with constant $C_9$ given by \eqref{eq:constC9}, for all $s \geq 3r_0$. Clearly, $C_9 \geq 1$, since $f$ is nonincreasing.

We now show (b). Let 
$$
n_{r_0}:= \inf \left\{ n \in \N: (1+C_8(n+2)/2)^{(C_7(n+2)/2)} > C_9^n/f(2r_0) \right\}.
$$
First we prove that the inequality 
$$
G((n+2)r_0/(2r_0)) = G((n+2)/2) \leq f((n+2)r_0), \quad n \geq n_{r_0},
$$
holds. If this is not true, then there is  $n \geq n_{r_0}$ such that 
\begin{align*}
f((n+2)r_0) & < G((n+2)/2) = e^{-(C_{7}(n+2)/2) \log(1+C_{8}(n+2)/2)} \\ & = (1+C_8(n+2)/2)^{-(C_7(n+2)/2)} < f(2r_0)/C_9^n.
\end{align*}
However, by (a) we have $0<f(2r_0) \leq C_9^n f((n+2)r_0)$, for every $n \in \N$. This gives a contradiction. We thus proved that the inequality in (b) holds with the constant $C_{10}=C_9$ for all $s \geq (n_{r_0}+2)r_0$ and therefore it also holds with $C_{10}:=C_9 \vee [f((n_{r_0}+2)r_0)]^{-1}$ for all $s \geq r_0$. 
\end{proof}

The following lemma yields the sharpest upper bound for the convolutions $\nubounded{r}^{\ast n}$ given by the profile function $f$ localizing the L\'evy measure from above in \textbf{(D)}. By \emph{sharpest bound} we mean here the estimate with the exact rate $f(\cdot)$ instead of $f(c \, \cdot)$ for some $c \in (0,1)$. Such bounds were not known before and it is a little bit surprising or even unexpected that the single estimate in \textbf{(D)} extends to all convolutions via the condition \textbf{(C)}. Weaker, not sharp, versions of Lemma \ref{lm:conv_est3} (b) with rates $f(c \, \dist(A,0))$ for some $c \in (0,1)$ were studied before (see e.g. \cite[Lemma 9]{KSz1}). However, our present result is based on a completely different argument using our new convolution condition \textbf{(C)}, which proved to be the optimal assumption to study such bounds. Lemma \ref{lm:conv_est3} will be a key argument in proving Theorem \ref{th:main2}.

\begin{lemat}
\label{lm:conv_est3}
Let
$\nu$ be a L\'evy measure such that $\nu(\Rdwithouto)=\infty$ and let the assumptions \textbf{(D)} and \textbf{(C)} be satisfied for some function $f$, the parameter $\gamma$ and some $r_0 >0$. 
Then the following hold.
\begin{itemize} 
\item[(a)] There is a constant $C_{11}=C_{11}(r_0)$ such that
\begin{align}
\label{eq:conv_est3}
\int_{|x-y|>r_0} f(|y-x|) \nubounded{r}^{n*}(dy) \leq \left(C_{11} \Psi\left(1/r \right)\right)^n f(|x|) ,\quad |x|\geq 3r_0, \ \ r \in (0,r_0], \ n \in \N.
\end{align}
\item[(b)] For every $ n \in \N$ and every bounded $A \in \Borel$ such that $\dist(A,0) \geq 3r_0-r_0/2^n$ we have
  \begin{equation}\label{eq:nu_n*_estnew}
    \nubounded{r}^{n*}(A) \leq C_{12}^n \left[\Psi(1/r)\right]^{n-1}f(\dist(A,0)) \ (\diam(A))^{\gamma}, \quad r \in (0,r_0],
  \end{equation}
  with a constant $C_{12}:=C_{12}(r_0, \left\lceil \diam(A)/r_0\right\rceil)$. 
\end{itemize}
\end{lemat}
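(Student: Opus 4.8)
The plan is to prove part (a) by induction on $n$, using the first inequality in \textbf{(C)} to pass from $n$ to $n+1$, and then to derive part (b) from part (a) by combining it with the domination condition \textbf{(D)} on a suitable covering of $A$. For $n=1$, the estimate \eqref{eq:conv_est3} is essentially the first inequality in \textbf{(C)} itself: for $|x|\geq 3r_0$ and $r\in(0,r_0]$ we have $\int_{|x-y|>r_0}f(|y-x|)\,\nubounded{r}(dy)=\int_{|x-y|>r_0,\,|y|>r}f(|y-x|)\,\nu(dy)\leq L_1\Psi(1/r)f(|x|)$, so one takes $C_{11}\geq L_1$. For the inductive step I would write $\nubounded{r}^{(n+1)*}=\nubounded{r}^{n*}*\nubounded{r}$, so that
\begin{align*}
\int_{|x-y|>r_0} f(|y-x|)\,\nubounded{r}^{(n+1)*}(dy)
=\int_{\R^d}\left(\int_{|x-y-z|>r_0} f(|y+z-x|)\,\nubounded{r}^{n*}(dy)\right)\nubounded{r}(dz).
\end{align*}
For fixed $z$ with $|z|>r$, if $|x-z|\geq 3r_0$ the inner integral is bounded by $(C_{11}\Psi(1/r))^n f(|x-z|)$ by the inductive hypothesis applied at the point $x-z$; if $|x-z|<3r_0$ one uses monotonicity of $f$ together with part (a) of Lemma \ref{lm:useful} (the estimate $f(s-r_0)\leq C_9 f(s)$, iterated a bounded number of times) to bound $\int f(|y+z-x|)\,\nubounded{r}^{n*}(dy)\leq |\nubounded{r}^{n*}| f(0^+)$ appropriately — actually the cleaner route is to note that in this regime $|x-z|$ is comparable to $r_0$, so $f(|x-z|)$ is comparable to a constant times $f(|x|)$ once $|x|\geq 3r_0$ (again via Lemma \ref{lm:useful}(a)) and $|\nubounded{r}^{n*}|\leq (L_5\Psi(1/r))^n$ by \eqref{eq:Lm<Psi}. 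In the main regime, after the inductive bound we are left with $(C_{11}\Psi(1/r))^n \int_{|x-z|>r_0,\,|z|>r} f(|x-z|)\,\nu(dz)$, which by \textbf{(C)} again is at most $(C_{11}\Psi(1/r))^n L_1 \Psi(1/r) f(|x|)$, giving the desired bound with $C_{11}$ enlarged to absorb all the constants ($L_1$, $C_9$ to the bounded power $\lceil 3r_0/r_0\rceil$, $L_5$).

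For part (b), I would fix a bounded Borel set $A$ with $\dist(A,0)\geq 3r_0-r_0/2^n$ and split the convolution as $\nubounded{r}^{n*}=\nubounded{r}^{(n-1)*}*\nubounded{r}$, writing
\begin{align*}
\nubounded{r}^{n*}(A)=\int_{\R^d} \nubounded{r}^{(n-1)*}(A-z)\,\nubounded{r}(dz).
\end{align*}
Here $A-z=\{a-z:a\in A\}$ has the same diameter as $A$. To control $\nubounded{r}^{(n-1)*}(A-z)$ I would bound it by $\int_{|y-x_A|\leq \diam(A)}\nubounded{r}^{(n-1)*}(dy)$ where $x_A$ is the point of $\overline{A-z}$ nearest $0$ — but since I want to apply the integrated bound from part (a), a better approach is to use $f$. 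Specifically, if $\dist(A-z,0)>r_0$, I would use $f(\dist(A-z,0))\leq f(|y-?|)$... The actual mechanism is: for $w$ in a set $B$ with $\dist(B,0)>r_0$, monotonicity gives $\indyk{B}(w)\leq f(\dist(B,0))^{-1} f(\dist(B,0)) \leq f(\dist(B,0))^{-1} f(|w|)$ only if $|w|\leq \dist(B,0)$, which fails; instead one wants an upper bound, so use $\indyk{B}(w) f(\dist(B,0)) \leq f(\dist(B,0)) \leq$... This suggests that the right way is: pick the center $x_0$ of (a ball containing) $A$, so that $|w-x_0|\leq \diam(A)$ for $w\in A$; then for such $w$, $f(|w-x_0|)\geq f(\diam(A))$, hence $\indyk{A}(w) \leq f(\diam(A))^{-1} f(|w-x_0|)\indyk{\{|w-x_0|\leq\diam(A)\}}$. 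Applying this with $A$ replaced by $A-z$ and $x_0$ by $x_0-z$, and integrating $\nubounded{r}^{(n-1)*}$, I get $\nubounded{r}^{(n-1)*}(A-z)\leq f(\diam(A))^{-1}\int_{|y-(x_0-z)|\leq\diam(A)} f(|y-(x_0-z)|)\,\nubounded{r}^{(n-1)*}(dy)$. Now if $\diam(A)\leq r_0$ I cannot directly invoke part (a) (which integrates over $|x-y|>r_0$); so instead I would use the weaker elementary bound $\nubounded{r}^{(n-1)*}(A-z)\leq (L_5\Psi(1/r))^{n-1}$ when $\diam(A)\leq r_0$ is small, combined with $\nubounded{r}^{(n-1)*}(A-z)$ being supported where $|y|\geq (n-1)r$... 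Hmm, the genuinely efficient route, and the one I expect the authors use, is an induction on $n$ for (b) directly, using \textbf{(D)} for the base case $n=1$ and peeling off one factor at a time.

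So here is the cleaner plan for (b): induct on $n$. For $n=1$, \eqref{eq:nu_n*_estnew} is exactly $\nubounded{r}(A)\leq\nu(A)\leq L_3 f(\dist(A,0))(\diam(A))^\gamma$ from \textbf{(D)}, valid for any $A$ with $\dist(A,0)>0$, so certainly for $\dist(A,0)\geq 3r_0-r_0/2$; take $C_{12}\geq L_3$. For the step, write $\nubounded{r}^{n*}(A)=\int \nubounded{r}^{(n-1)*}(A-z)\,\nubounded{r}(dz)$ and split the $z$-integration into the region $|x_0-z|\geq 3r_0-r_0/2^{n-1}$ (where the inductive hypothesis applies to $A-z$, since $\dist(A-z,0)\geq |x_0-z|-\diam(A)$ and one arranges $\diam(A)$ absorbed by the dyadic gap — this is exactly why the threshold is stated with $3r_0-r_0/2^n$ rather than $3r_0$, the geometric series $\sum r_0/2^k$ leaving room), and the complementary bounded region. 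In the good region, the inductive hypothesis gives $\nubounded{r}^{(n-1)*}(A-z)\leq C_{12}^{n-1}[\Psi(1/r)]^{n-2} f(\dist(A-z,0))(\diam(A))^\gamma$, and then $f(\dist(A-z,0))\leq f(|x_0-z|-\diam(A))\leq C_9^{k}f(|x_0-z|)$ for a bounded number $k=\lceil\diam(A)/r_0\rceil$ of steps via Lemma \ref{lm:useful}(a); integrating $\int f(|x_0-z|)\,\nubounded{r}(dz)$ over $|x_0-z|>r_0$ against \textbf{(C)} yields a further factor $L_1\Psi(1/r)f(|x_0|)\leq L_1\Psi(1/r)f(\dist(A,0))$ (using monotonicity and $|x_0|\geq\dist(A,0)$), and over $|x_0-z|\leq r_0$ uses \textbf{(D)} plus the second inequality in \textbf{(C)}, $f(r)\leq L_4\Psi(1/r)r^{-\gamma}$, to control $\int_{|x_0-z|\leq r_0}f(|x_0-z|)\,\nubounded{r}(dz)$ — this is where \textbf{(D)} with its $(\diam)^\gamma$ is essential and where the factor $[h(t)]^{\gamma-d}$-type scaling ultimately originates. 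In the bad region $|x_0-z|<3r_0$, $\nubounded{r}^{(n-1)*}(A-z)$ is controlled by total mass $(L_5\Psi(1/r))^{n-1}$ times the comparison $1\asymp f(\dist(A,0))/f(\dist(A,0))$, and $\nubounded{r}$ of that region is at most $|\nubounded{r}|\leq L_5\Psi(1/r)$, but one must also insert the factor $f(\dist(A,0))(\diam(A))^\gamma$ — this is recovered because on this region $\dist(A,0)\leq |x_0|\leq|x_0-z|+|z|$ is comparable to $r_0$, hence $f(\dist(A,0))$ is bounded below by a constant, and $(\diam(A))^\gamma$ appears from applying \textbf{(D)} to $A-z$ at scale $r_0$. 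Collecting everything and enlarging $C_{12}$ to swallow $L_1, L_3, L_4, L_5, C_9^{\lceil\diam(A)/r_0\rceil}$ and the dyadic geometric constants completes the induction.

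The main obstacle I anticipate is bookkeeping the geometry in the region where the inductive hypothesis does \emph{not} directly apply — i.e. making the dyadic threshold $3r_0-r_0/2^n$ work so that, after subtracting $\diam(A)$ and the accumulated shifts, one never drops below the threshold needed at level $n-1$; this forces a careful choice of how much of the gap is spent at each step, and is the reason the statement is phrased with that particular threshold rather than a clean $3r_0$. A secondary subtlety is that part (a) integrates $f(|y-x|)$ only over $|x-y|>r_0$, so to feed it into (b) one must separately dispose of the near-diagonal contribution $|x-y|\leq r_0$, where $f$ is merely bounded and one relies instead on \textbf{(D)} together with the technical second inequality in \textbf{(C)} ($f(r)\leq L_4\Psi(1/r)r^{-\gamma}$) to keep the powers of $\Psi(1/r)$ and the factor $(\diam(A))^\gamma$ correctly matched; once these two points are handled the rest is the routine absorption of constants described above.
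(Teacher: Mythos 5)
Your overall plan — induction on $n$ for both parts, the convolution condition \textbf{(C)} to pass from $n$ to $n+1$, \textbf{(D)} for the base case of (b), the total-mass bound $|\nubounded{r}^{n*}|\leq (L_5\Psi(1/r))^n$, Lemma \ref{lm:useful}(a) to shift arguments of $f$, and the dyadic threshold leaving "room" for the induction — is the right plan and matches the paper's. But two of the key steps are not correct as stated.

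In part (a), your decomposition of the $z$-integral into $|x-z|<3r_0$ and $|x-z|\geq 3r_0$ is exactly the paper's, and the good region is handled correctly. The problem is the bad region $|x-z|<3r_0$. Your "cleaner route" claims that $f(|x-z|)$ is comparable to $f(|x|)$ because "$|x-z|$ is comparable to $r_0$," but this is false: when $|x-z|<3r_0$ the value $f(|x-z|)\geq f(3r_0)$ is bounded below by a constant, while $|x|$ can be arbitrarily large (take $|z|$ large) and so $f(|x|)$ can be arbitrarily small. Lemma \ref{lm:useful}(a) only controls a shift of $f$ by a bounded amount, i.e.\ $f(|x|-3r_0)\leq C_9^3 f(|x|)$; it says nothing relating $f$ at a bounded point to $f$ at a large point. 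The missing ingredient is \textbf{(D)}: after bounding the inner $\nubounded{r}^{n*}$-integral by $f(r_0)|\nubounded{r}^{n*}|$, you must estimate $\nubounded{r}(B(x,3r_0))$ via \textbf{(D)}, which gives the crucial factor $f(|x|-3r_0)(6r_0)^\gamma$ (and an extra $\Psi(1/r)\geq\Psi(1/r_0)$ trivially), then apply Lemma \ref{lm:useful}(a). One also needs the separate case $3r_0\leq|x|<5r_0$ (where $f(|x|)$ is bounded below by $f(5r_0)$) so that the crude total-mass bound suffices there.

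In part (b), you peel off a single $\nubounded{r}$ as the integration variable and apply the inductive hypothesis to $\nubounded{r}^{(n-1)*}(A-z)$. The paper uses the opposite factorization, $\nubounded{r}^{(n+1)*}(A)=\int\nubounded{r}(A-y)\,\nubounded{r}^{n*}(dy)$, and this is not cosmetic. With the paper's choice, the single factor $\nubounded{r}(A-y)$ is a measure applied to a set with exactly the same diameter as $A$, so \textbf{(D)} produces the factor $(\diam A)^\gamma$ uniformly; the inductive hypothesis (and part (a)) is then applied via Fubini to $\nubounded{r}^{n*}$ on the far region of the $y$-integral. With your choice, the bad region (where $\dist(A-z,0)<3r_0-r_0/2^{n-1}$ and the hypothesis at level $n-1$ cannot be invoked) forces you to bound $\nubounded{r}^{(n-1)*}(A-z)$ by total mass — which has no $(\diam A)^\gamma$ factor — and then try to recover it from $\nubounded{r}$ of a $c$-neighborhood $A^{(c)}$ of $A$, whose diameter is $\diam A+2c$, not $\diam A$; the ratio $(\diam A+2c)^\gamma/(\diam A)^\gamma$ blows up for small $\diam A$, so the constant ceases to be of the form $C_{12}(r_0,\lceil\diam A/r_0\rceil)$. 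Your separate claim that in the bad region "$\dist(A,0)$ is comparable to $r_0$" is also false: $|x_0-z|$ is bounded there, but $|z|$, and hence $|x_0|=\delta_A$, can be arbitrarily large. Finally, note that the paper's proof of (b) feeds part (a) directly into the far sub-case (the integral $\int f(\dist(A,y))\,\nubounded{r}^{n*}(dy)$ is bounded using \eqref{eq:conv_est3}), which your sketch replaces with \textbf{(C)} alone; that piece would work, but the bad-region gap above is real and requires switching to the paper's factorization.
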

   
\begin{proof}
First we consider (a). We prove that \eqref{eq:conv_est3} holds with $C_{11}$ given by \eqref{const:const}. For $n=1$ it is just the assumption \textbf{(C)}. 
Assume now that \eqref{eq:conv_est3} is true for some natural $n$ and all $x \in \Rd$ such that $|x|\geq 3r_0$. We will show that 
it holds also for $n+1$. For every $r \in (0,r_0]$ and $x \in \R^d$ with $|x|\geq 3r_0$ we have
\begin{align*}
\int_{|x-y|>r_0} f(|y-x|) \nubounded{r}^{(n+1)*}(dy) & = \int_{|x-z| < 3r_0 }\int_{|(x-z)-y|>r_0} f(|(x-z)-y|)\nubounded{r}^{n*}(dy) \nubounded{r}(dz)  \\ & \ \ \ + \int_{|x-z| \geq 3r_0} \int_{|(x-z)-y|>r_0} f(|(x-z)-y|)\nubounded{r}^{n*}(dy) \nubounded{r}(dz)  \\ & = I_1 + I_2.
\end{align*}
To estimate $I_1$ we consider two cases. When $3r_0\leq |x| < 5r_0$, then simply
$$
I_1 \leq f(r_0) |\nubounded{r}^{n*}| |\nubounded{r}|[f(5r_0)]^{-1} f(|x|) \leq f(r_0)L_5^{n+1} [f(5r_0)]^{-1} f(|x|) \left(\Psi\left(1/r \right)\right)^{n+1}.
$$ 
If now $|x| \geq 5r_0$, then by \textbf{(D)} and Lemma \ref{lm:useful} (a) we get
\begin{align*}
\nubounded{r}(B(x,3r_0)) & \leq L_3 \left[\Psi\left(1/r_0 \right)\right]^{-1} (3r_0)^{\gamma} \Psi\left(1/r \right) f(|x|-3r_0) \\ & \leq L_3 C_9^3 \left[\Psi\left(1/r_0 \right)\right]^{-1} (3r_0)^{\gamma} f(|x|) \Psi\left(1/r \right),
\end{align*} 
and, consequently, in this case,
$$
I_1 \leq f(r_0) |\nubounded{r}^{n*}| \nubounded{r}(B(x,3r_0)) \leq L_3 C_9^3 L_5^n \left[\Psi\left(1/r_0 \right)\right]^{-1} (6r_0)^{\gamma} f(r_0) f(|x|) \Psi\left(1/r \right)^{n+1}.
$$
To estimate $I_2$ it is enough to observe that by induction hypothesis we have 
$$
\int_{|(x-z)-y|>r_0} f(|(x-z)-y|)\nubounded{r}^{n*}(dy) \leq  \left(C_{11}\Psi\left(1/r \right)\right)^n f(|x-z|), \quad r \in (0,r_0],
$$
and, in consequence, by assumption \textbf{(C)}, 
$$
I_2 \leq \left(C_{11} \Psi\left(1/r \right)\right)^n \int_{|x-z| > r_0} f(|x-z|) \nubounded{r}(dz) \leq L_1 \left(C_{11}\right)^n  \left(\Psi\left(1/r \right)\right)^{n+1} f(|x|), 
$$
for every $r \in (0,r_0]$. Hence, \eqref{eq:conv_est3} holds for $n+1$, every $r \in (0,r_0]$ and every $x \in \R^d$ such that $|x| \geq 3r_0$ with constant 
\begin{align}
\label{const:const}
C_{11} =\left(L_5f(r_0)[f(5r_0)]^{-1} + L_3 C_9^3 f(r_0) \left[\Psi\left(1/r_0 \right)\right]^{-1} (6r_0)^{\gamma} + L_1\right) \vee L_5, 
\end{align}
and proof of (a) is complete. 

We now show (b). We prove the desired bound with constant $C_{12}$ given by \eqref{eq:const12}. When $n=1$ then our claim follows directly from \textbf{(D)}. Suppose now that \eqref{eq:nu_n*_estnew} is true for some $n \in \N$, all bounded sets $A \in \Borel$ such that $\dist(A,0) \geq 3r_0-r_0/2^n$ and every $r \in (0,r_0]$. We check \eqref{eq:nu_n*_estnew} for $n+1$. To shorten the notation let $\delta_A:=\dist(A,0)$. We consider two cases: $3r_0-r_0/2^{n+1} < \delta_A < 6r_0$ and $\delta_A \geq 6r_0$. Let first $3r_0-r_0/2^{n+1} < \delta_A < 6r_0$. Let
\begin{align*}
\nubounded{r}^{(n+1)*}(A) & = \int \nubounded{r}(A-y) \nubounded{r}^{n*}(dy) = \int_{|y|< \delta_A-r_0/2^{n+1}} + \int_{|y| \geq \delta_A-r_0/2^{n+1}} =: I_{11}+I_{12}.
\end{align*}
By \textbf{(D)}, the second part of \textbf{(C)} and \eqref{eq:Lm<Psi}, we have
\begin{align*}
I_{11} & \leq L_3 \int_{|y|< \delta_A-r_0/2^{n+1}} f(\delta_A-|y|) (\diam(A-y))^{\gamma}\nubounded{r}^{n*}(dy) \\ & \leq L_3  f(r_0/2^{n+1}) |\nubounded{r}^{n*}| (\diam(A))^{\gamma} \leq L_3L_4 (2^{n+1}/r_0)^{\gamma} \Psi(2^{n+1}/r_0) |\nubounded{r}^{n*}| (\diam(A))^{\gamma} \\ & \leq L_3L_4 [(r_0/2)^{\gamma} f(6r_0)]^{-1} \Psi(2/r_0) (2^{\gamma} L_5 L_6)^n f(\delta_A) (\Psi(1/r))^n (\diam(A))^{\gamma}.
\end{align*}
To estimate $I_{12}$ we just use the induction hypothesis and Lemma \ref{lm:useful} (a). Indeed, we have
\begin{align*}
I_{12} & = \int \nubounded{r}^{n*}((A-y) \cap B(0,\delta_A - r_0/2^{n+1})^c) \nubounded{r}(dy) \\ & \leq C_{12}^n f(\delta_A - r_0/2^{n+1})  (\Psi(1/r))^{n-1}(\diam(A))^{\gamma} |\nubounded{r}|  \leq L_5 C_9 C_{12}^n f(\delta_A)  (\Psi(1/r))^{n}(\diam(A))^{\gamma}.
\end{align*}

Let now $\delta_A > 6r_0$ and 
\begin{align*}
\nubounded{r}^{(n+1)*}(A) & = \int \nubounded{r}(A-y) \nubounded{r}^{n*}(dy) = \int_{|y| < \delta_A-3r_0}+ \int_{|y| \geq \delta_A-3r_0}  =: I_{21}+I_{22}.
\end{align*}
By exactly the same argument as for $I_{12}$, we get $I_{22} \leq L_5 C_9^3 C_{12}^n f(\delta_A)  (\Psi(1/r))^{n}(\diam(A))^{\gamma}$. It is enough to estimate $I_{21}$. By \textbf{(D)} and Lemma \ref{lm:useful} (a), we have
\begin{align*}
I_{21} & \leq L_3 (\diam(A))^{\gamma} \int_{|y| < \delta_A-3r_0} f(\dist(A,y)) \nubounded{r}^{n*}(dy) \\ & \leq L_3 (\diam(A))^{\gamma} C_9^{\left\lceil \diam(A)/r_0\right\rceil} \int_{|y-x_A|>r_0} f(|y-x_A|) \nubounded{r}^{n*}(dy),
\end{align*}
with some $x_A \in \R^d$ such that $|x_A|=\delta_A$. Thus, by \eqref{eq:conv_est3}, we conclude that 
$$
I_{21} \leq L_3  C_9^{\left\lceil \diam(A)/r_0\right\rceil}C_{11}^n f(\delta_A) (\Psi(1/r))^n (\diam(A))^{\gamma}
$$
and, therefore, \eqref{eq:nu_n*_estnew} holds with
\begin{align} \label{eq:const12}
C_{12}:= \max \left\{L_3, \ L_3L_4 [(r_0/2)^{\gamma} f(6r_0)]^{-1} \Psi(2/r_0) + L_5C_9^3 + L_3  C_9^{\left\lceil \diam(A)/r_0\right\rceil}, \ 2^{\gamma} L_5L_6, \ C_{11}\right\},
\end{align}
which completes the proof of the lemma.
\end{proof}

We now show that under the assumption that the L\'evy measure has a density which is comparable to some radially nonincreasing profile, the condition  \textsc{(3.1)} of Theorem \ref{th:main3} is in fact equivalent to {\bf (C)}.

\begin{lemat}\label{lm:useful_2} 
Let
$\nu(dx)=g(x)dx$ be a L\'evy measure such $\nu(\Rdwithouto)=\infty$ and let $f:\:(0,\infty)\to (0,\infty)$ be a nonincreasing function such that $g(x) \asymp f(|x|)$, $x \in \Rdwithouto$. Then the condition \textsc{(3.1)} of Theorem \ref{th:main3} is equivalent to {\bf (C)}.
\end{lemat}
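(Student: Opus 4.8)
\textbf{Proof plan for Lemma \ref{lm:useful_2}.}

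The plan is to prove the two implications separately, exploiting the absolute continuity of $\nu$ to pass freely between the measure-theoretic statement in \textbf{(C)} and the pointwise comparability $g(x) \asymp f(|x|)$. Throughout I will write $g(x) \leq c_0 f(|x|)$ and $f(|x|) \leq c_0 g(x)$ for the comparability constant $c_0 \geq 1$. Note first that the second inequality in \textbf{(C)} with $\gamma = d$, namely $f(r) \leq L_4 \Psi(1/r) r^{-d}$ for $r \in (0,r_0]$, is always satisfied here: by \eqref{eq:Lm<Psi} and monotonicity of $f$, for $r \leq r_0$ we have $f(r) |B(0,r)| \leq c_0 \int_{B(0,r) \setminus \{0\}} f(|y|)\, dy \leq c_0^2 \nu(B(0,r)) \leq c_0^2 |\nu| \leq c_0^2 L_5 \Psi(1/r)$, which gives the claim with $L_4$ depending only on $c_0$, $L_5$ and $d$. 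So the only content is the equivalence of the first inequality in \textbf{(C)} with \textsc{(3.1)}.

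\emph{\textbf{(C)} $\Rightarrow$ \textsc{(3.1)}.} I would take the first inequality in \textbf{(C)} with $r = r_0$ and estimate, for $|x| \geq 2r_0$,
\begin{align*}
g_{r_0} * g_{r_0}(x)
&= \int_{|z| > r_0} g(z)\, g(x-z)\, dz
= \int_{|z| > r_0,\ |x-z| > r_0} g(z)\, g(x-z)\, dz + \int_{|z|>r_0,\ |x-z|\leq r_0} g(z)\, g(x-z)\, dz.
\end{align*}
For the second integral, on its domain one has $|z| \geq |x| - r_0 \geq r_0$ and $g(z) \leq c_0 f(|z|) \leq c_0 f(|x|-r_0)$, so it is bounded by $c_0 f(|x|-r_0) \int_{|x-z| \leq r_0} g(x-z)\, dz \leq c_0 f(|x|-r_0)\, |\nu|$, and then $f(|x|-r_0) \leq C_9 f(|x|) \leq c_0 C_9 g(x)$ by Lemma \ref{lm:useful}(a) (valid here for $|x| \geq 3r_0$; the range $2r_0 \leq |x| \leq 3r_0$ is handled trivially by boundedness of $g_{r_0}*g_{r_0}$ and a lower bound on $g$ there using $f(|x|) \geq f(3r_0) > 0$). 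For the first integral, I bound $g(z) \leq c_0 f(|z|)$, substitute $w = x - z$, and recognize it as $\int_{|w-x|>r_0,\ |x-w|>r_0}$... more precisely as $c_0 \int_{|x-w| > r_0,\ |w| > r_0} f(|x-w|)\, g(w)\, dw \leq c_0^2 \int_{|x-w|>r_0,\ |w|>r_0} f(|x-w|)\, \nu(dw)$, which by the first inequality in \textbf{(C)} with $r = r_0$ is at most $c_0^2 L_1 \Psi(1/r_0) f(|x|) \leq c_0^3 L_1 \Psi(1/r_0)\, g(x)$. Combining the two pieces gives \textsc{(3.1)} with a constant depending on $c_0, L_1, \Psi(1/r_0), C_9, |\nu|$.

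\emph{\textsc{(3.1)} $\Rightarrow$ \textbf{(C)}.} Here I start from $g_{r_0} * g_{r_0}(x) \leq L_1 g(x)$ for $|x| \geq 2r_0$ and must recover the first inequality in \textbf{(C)} for all $r \in (0,r_0]$ and $|x| \geq 2r_0$. For $r = r_0$ this is essentially the reverse of the computation above: $\int_{|x-y|>r_0,\ |y|>r_0} f(|x-y|)\, \nu(dy) \leq c_0^2 \int_{|x-y|>r_0,\ |y|>r_0} f(|x-y|)\, g(y)\, dy \leq c_0^2 g_{r_0}*g_{r_0}(x) \leq c_0^2 L_1 g(x) \leq c_0^3 L_1 f(|x|)$, and $\Psi(1/r_0) \geq \Psi(1/r_0) > 0$ is absorbed into the constant. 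The remaining point is to extend from the integral over $|y| > r_0$ to the integral over $|y| > r$ with a general $r \leq r_0$, at the cost of the factor $\Psi(1/r)$. Writing $\int_{|x-y|>r_0,\ |y|>r} = \int_{|x-y|>r_0,\ |y|>r_0} + \int_{|x-y|>r_0,\ r < |y| \leq r_0}$, the first term is controlled by the $r=r_0$ case (and $\Psi(1/r) \geq \Psi(1/r_0)$), and on the second term, since $|x| \geq 2r_0$ forces $|x-y| \geq |x| - r_0 \geq r_0 > 0$ and hence $f(|x-y|) \leq f(r_0) \leq c_0 f(|x|) / (\text{lower bound for } f(|x|)/f(r_0))$—more cleanly, $f(|x-y|) \leq C_9^{\lceil |x|/r_0 \rceil}$-type control is overkill; instead use $f(|x-y|) \leq f(|x|-r_0) \leq C_9 f(|x|)$ when $|x| \geq 3r_0$ (treating $2r_0 \leq |x| < 3r_0$ separately as before)—so the second term is at most $C_9 f(|x|)\, \nu(\{r < |y| \leq r_0\}) \leq C_9 f(|x|)\, |\nubounded{r}| \leq C_9 L_5 \Psi(1/r)\, f(|x|)$ by \eqref{eq:Lm<Psi}. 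This yields the first inequality in \textbf{(C)} with $L_1$ replaced by a constant depending on $c_0, L_1, C_9, L_5, \Psi(1/r_0)$, completing the proof.

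\textbf{Main obstacle.} The genuinely delicate point is the replacement of $f(|x-y|)$ by a constant multiple of $f(|x|)$ in the regions where $y$ ranges over a small-radius shell near the origin (so $|x-y|$ is close to $|x|$) — this is exactly what the decay estimate Lemma \ref{lm:useful}(a) provides, but one must be careful that $f$ could in principle vanish fast or oscillate, and that the constant $C_9$ there was derived precisely from the first inequality in \textbf{(C)}; in the direction \textsc{(3.1)} $\Rightarrow$ \textbf{(C)} one should check that Lemma \ref{lm:useful}(a) is still available, which it is once the $r=r_0$ case of \textbf{(C)} has been established (that case only uses \textsc{(3.1)}), so the argument is not circular. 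The bookkeeping of the small-$|x|$ range $2r_0 \leq |x| \leq 3r_0$ and of the comparability constants is routine but must be done to get genuine (not merely asymptotic) inequalities.
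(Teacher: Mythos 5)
Your overall decomposition for \textsc{(3.1)} $\Rightarrow$ \textbf{(C)} and the automatic derivation of the second inequality in \textbf{(C)} via \eqref{eq:Lm<Psi} track the paper's proof closely (the paper omits the converse direction as clear), but there are two points that need repair before the argument goes through.

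First, a slip in the direction \textbf{(C)} $\Rightarrow$ \textsc{(3.1)}: the identity $g_{r_0}*g_{r_0}(x)=\int_{|z|>r_0}g(z)g(x-z)\,dz$ is wrong. Since $g_{r_0}(\cdot)=g(\cdot)\indyk{B(0,r_0)^c}(\cdot)$ cuts off \emph{both} factors, the correct expression is $\int_{|z|>r_0,\ |x-z|>r_0}g(z)g(x-z)\,dz$. Your ``second integral'' over $\{|z|>r_0,\ |x-z|\leq r_0\}$ is not part of the convolution and, as written, diverges: $\int_{|x-z|\leq r_0}g(x-z)\,dz=\nu(B(0,r_0))=\infty$ because $\nu(\Rdwithouto)=\infty$. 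Once you use the correct identity, that piece disappears and the bound via \textbf{(C)} with $r=r_0$ and $g\asymp f(|\cdot|)$ is immediate.

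Second, and more substantially, the appeal to Lemma \ref{lm:useful}(a) in the direction \textsc{(3.1)}$\Rightarrow$\textbf{(C)} is circular, and your argument for why it is not does not close the gap. The proof of Lemma \ref{lm:useful}(a) uses \textbf{(C)} with $r=r_\varepsilon=|x_0|-\varepsilon$ where $|x_0|\leq r_0/2$, i.e. $r$ strictly smaller than $r_0/2$, because the test ball $B(x_0,\varepsilon)$ must lie \emph{inside} $B(0,r_0)$; having only the $r=r_0$ case of \textbf{(C)} does not let you run that proof, since the $r=r_0$ integral never sees $B(x_0,\varepsilon)$. The paper avoids this entirely: it re-proves $f(|x|-r_0)\leq c\,f(|x|)$ for $|x|\geq 4r_0$ directly from \textsc{(3.1)} and the density comparability, by inserting a test ball at distance $r_0$ to $3r_0$ from the origin (permissible here, since $g\asymp f(|\cdot|)>0$ everywhere, whereas the abstract Lemma \ref{lm:useful}(a) cannot assume this). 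Concretely:
\begin{align*}
L_1\,g(x)\ \geq\ g_{r_0}*g_{r_0}(x)\ \geq\ c_3\int_{B\left(\frac{2r_0}{|x|}x,\,r_0\right)}f(|y-x|)\,f(|y|)\,dy\ \geq\ c_4\,f(|x|-r_0)\,f(3r_0)\,r_0^d,
\end{align*}
which gives $f(|x|-r_0)\leq c_5\,f(|x|)$ on $|x|\geq 4r_0$; the range $2r_0\leq|x|\leq 4r_0$ is handled by strict positivity and monotonicity of $f$. Replacing your appeal to Lemma \ref{lm:useful}(a) and its constant $C_9$ with this direct argument (and correcting the convolution identity) makes the proof complete and brings it into agreement with the paper's.
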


\begin{proof}
Clearly, we only need to show that the condition \textsc{(3.1)} of Theorem \ref{th:main3} implies {\bf (C)}. For every $r \in (0,r_0)$ and $|x| \geq 2r_0$, by \textsc{(3.1)} and the monotonicity of $f$ and $\Psi$, we have
\begin{align*}
\int_{|x-y|>r_0,\ |y|>r} & f(|y-x|) g(y) dy \\ & \leq c \left(\int_{|x-y|>r_0,\ |y|>r_0} f(|y-x|) g(y) dy + \int_{r < |y| \leq r_0} f(|y-x|) \,g(y) dy \right)\\ 
& \leq c_1\left(\Psi(1/r_0) f(|x|)+ f(|x|-r_0) \nu(B(0,r)^c) \right)  \leq c_2 \Psi(1/r) (f(|x|)+f(|x|-r_0)).
\end{align*}
If now $|x| \geq 4r_0$, then by the comparability $g(x) \asymp f(|x|) >0$ and by similar argument as in Lemma \ref{lm:useful} (a) based on \textsc{(3.1)}, we get
\begin{align*}
f(|x|) & \geq c_3 \int_{B((2r_0/|x|)x,r_0)}f(|y-x|) f(|y|) dy \\ & \geq c_4 f(|x|-r_0) \int_{ B((2r_0/|x|)x,r_0)} f(|y|) dy \geq c_5 f(3r_0) r_0^d f(|x|-r_0) 
\end{align*}
and the first inequality in {\bf (C)} is satisfied. If $|x| \in [2r_0,4r_0]$, then by strict positivity and monotonicity of $f$, $f(|x|-r_0) \leq c_6 f(|x|)$, and the first bound in {\bf (C)} follows again. To show the second part of {\bf (C)} for $\gamma = d$ we observe that by \eqref{eq:Lm<Psi} we have
\begin{align} \label{eq:from4}
c_7 r^d f(r) \leq \int_{r/2 \leq |y| < r} g(y) dy \leq \nu(B(0,r/2)^c \cap B(0,r)) \leq L_5 \Psi(2/r) \leq L_5 L_6 \Psi(1/r), \quad r > 0. 
\end{align}
\end{proof}

\section{Proofs of Theorems \ref{th:main2} and \ref{th:main3} } \label{sec:proofs1}

We start with the following lemma which is a corollary from the estimates of the $n$-th convolutions of L\'evy measures proven in the previous section. 

\begin{lemat}
\label{lm:conv_est4}
Let
$\nu$ be a L\'evy measure such that $\nu(\Rdwithouto)=\infty$ and let the assumptions \textbf{(D)} and \textbf{(C)} be satisfied for some function $f$, the parameter $\gamma$ and some $r_0 >0$.  Recall that $\bar{P}_t=\bar{P}_t^{h(t)}$ and let $t_0:=\frac{1}{\Psi(1/r_0)}$. The following hold.
\begin{itemize} 
\item[(a)] We have 
\begin{align*}
\int_{|x-y|>r_0} f(|y-x|) \bar{P}_t(dy) \leq e^{C_{11}} f(|x|) ,\quad |x|\geq 3r_0, \ \ t \in (0,t_0].
\end{align*}
\item[(b)] For every bounded $A \in \Borel$ such that $\dist(A,0) \geq 3r_0$ we have
\begin{align*}
 \bar{P}_t(A) & \leq e^{C_{12}} t \ f(\dist(A,0))(\diam(A))^{\gamma}, \quad t \in (0,t_0]. 
\end{align*}
\end{itemize}
\end{lemat}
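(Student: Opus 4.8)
The plan is to derive both parts of Lemma~\ref{lm:conv_est4} directly from the series representation \eqref{eq:exp} of $\bar{P}_t$, summing the $n$-th convolution estimates from Lemma~\ref{lm:conv_est3}. First I would recall that for $r=h(t)$ we have $\Psi(1/h(t)) = \Psi(\Psi^{-1}(1/t)) = 1/t$, so $\Psi(1/r)=1/t$ exactly; moreover the condition $t \in (0,t_0]$ with $t_0 = 1/\Psi(1/r_0)$ translates, via monotonicity of $\Psi$ and of $\Psi^{-1}$, into $h(t) \leq r_0$, which is exactly the range $r \in (0,r_0]$ required to apply Lemma~\ref{lm:conv_est3}. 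This identification of $\Psi(1/r)$ with $1/t$ is what makes the factors $(C_{11}\Psi(1/r))^n$ and $[\Psi(1/r)]^{n-1}$ collapse nicely against the $t^n/n!$ weights in \eqref{eq:exp}.

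For part (a), I would write, using \eqref{eq:exp} and nonnegativity of all terms,
\begin{align*}
\int_{|x-y|>r_0} f(|y-x|)\,\bar{P}_t(dy)
= e^{-t|\nubounded{h(t)}|}\sum_{n=0}^\infty \frac{t^n}{n!}\int_{|x-y|>r_0} f(|y-x|)\,\nubounded{h(t)}^{n*}(dy).
\end{align*}
The $n=0$ term is the point mass at $0$, contributing $f(|x|)$ (here $|x|\geq 3r_0>r_0$ so the indicator is satisfied). For $n\geq 1$, Lemma~\ref{lm:conv_est3}(a) with $r=h(t)$ gives the bound $(C_{11}/t)^n f(|x|)$, so the sum is at most $\sum_{n\geq 0} \frac{t^n}{n!}(C_{11}/t)^n f(|x|) = e^{C_{11}} f(|x|)$; dropping the factor $e^{-t|\nubounded{h(t)}|}\leq 1$ yields the claim. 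For part (b), the same splitting applies: the $n=0$ term vanishes since $\dist(A,0)\geq 3r_0>0$ forces $0\notin A$; for $n\geq 1$ Lemma~\ref{lm:conv_est3}(b) applies because $\dist(A,0)\geq 3r_0 \geq 3r_0 - r_0/2^n$, giving $C_{12}^n (1/t)^{n-1} f(\dist(A,0))(\diam(A))^\gamma$. Hence
\begin{align*}
\bar{P}_t(A) \leq \sum_{n=1}^\infty \frac{t^n}{n!} C_{12}^n t^{-(n-1)} f(\dist(A,0))(\diam(A))^\gamma
= t\,f(\dist(A,0))(\diam(A))^\gamma \sum_{n=1}^\infty \frac{C_{12}^n}{n!} \leq e^{C_{12}} t\, f(\dist(A,0))(\diam(A))^\gamma,
\end{align*}
again after discarding $e^{-t|\nubounded{h(t)}|}$.

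I do not expect a serious obstacle here; this lemma is essentially bookkeeping on top of the real work done in Lemma~\ref{lm:conv_est3}. The only points requiring a little care are: (i) checking that $t\in(0,t_0]$ genuinely gives $h(t)\le r_0$ so that Lemma~\ref{lm:conv_est3} is applicable with $r=h(t)$ (this uses $\Psi(\Psi^{-1}(s))=s$ together with the monotonicity facts recorded right after the definition of $\Psi^{-1}$), and the accompanying identity $\Psi(1/h(t))=1/t$; (ii) noting that the geometric/exponential series converge and that the prefactor $e^{-t|\nubounded{h(t)}|}$ is harmlessly bounded by $1$; and (iii) in part (b), confirming the diameter hypothesis $\dist(A,0)\ge 3r_0 - r_0/2^n$ of Lemma~\ref{lm:conv_est3}(b) is met for every $n\ge 1$, which is immediate from $\dist(A,0)\ge 3r_0$. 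If one wants to be fully precise, the constant $C_{12}$ in the statement should be understood (as in Lemma~\ref{lm:conv_est3}(b)) to depend on $\lceil \diam(A)/r_0\rceil$, and this dependence is inherited unchanged.
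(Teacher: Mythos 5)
Your proof is correct and matches the paper's intent exactly; the paper simply remarks that (a) and (b) follow directly from the series representation \eqref{eq:exp} and Lemma \ref{lm:conv_est3}, and your write-up supplies precisely those routine details (the identity $\Psi(1/h(t))=1/t$, the translation of $t\le t_0$ into $h(t)\le r_0$, the $n=0$ term accounting, and the summation of the exponential series). Nothing to change.
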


\begin{proof}
Statements (a) and (b) are direct consequences of \eqref{eq:exp} and estimates (a) and (b) in Lemma \ref{lm:conv_est3}, respectively.
\end{proof}

We are now ready to prove Theorem \ref{th:main2}. 

\begin{proof}[Proof of Theorem $\ref{th:main2}$]
By \eqref{eq:basic_ineq}, we only need to estimate the integral
$$
I:= \int_{\R^d} G(|y-x|/h(t)) \bar{P}_t(dy)
$$
for all $|x| \geq 4r_0$ and $t \in (0,t_0]$, where $t_0:=1/\Psi(1/r_0) \wedge t_p$ (recall that $t_p$ is given in \textbf{(E)}). 

By Lemma \ref{lm:useful} (b) (as a consequence of \textbf{(C)}), we have
\begin{align}
\label{cond:a3}
G(s/(2h(t_0))) \leq G(s/2r_0) \leq C_{10} f(s), \quad s \geq r_0.
\end{align}
Let now $t \in (0,t_0]$ and $|x| \geq 4r_0$. By \eqref{cond:a3}, we have
$$
G(|y-x|/h(t))\leq G(r_0/(2h(t)))G(|y-x|/(2h(t_0))) \leq C_{10} G(r_0/(2h(t))) f(|y-x|), \quad |y-x|>r_0, 
$$
and, consequently, we get
\begin{align*}
I & = \int_{|y-x|\leq r_0}  G(|x-y|/h(t)) \, \bar{P}_t(dy) + \int_{|y-x|> r_0} G(|x-y|/h(t)) \, \bar{P}_t(dy) \\
& \leq  \int_{|y-x|\leq r_0}  G(|x-y|/h(t)) \, \bar{P}_t(dy) +  C_{10} G(r_0/(2h(t))) \int_{|y-x|> r_0} f(|x-y|) \, \bar{P}_t(dy). 
\end{align*}
Denote both integrals above by $I_1$ and $I_2$, respectively. We first estimate $I_1$. By Fubini, we have
\begin{align*}
  I_1 & = \int_{|y-x|\leq r_0} \int_0^{G(|x-y|/h(t))} \, ds\, \bar{P}_t(dy) \\
  & = \int_0^1 \int \indyk{\{y\in\Rd:\: G(|x-y|/h(t))>s, |y-x|\leq r_0\}} \, \bar{P}_t(dy) ds \\
  & = \int_0^1 \bar{P}_t\left(B(x,r_0 \wedge h(t)G^{-1}(s))\right) ds \\
  & = \int_{G(r_0/h(t))}^1 \bar{P}_t\left(B(x,h(t)G^{-1}(s))\right) ds + G(r_0/h(t)) \bar{P}_t\left(B(x,r_0)\right).
\end{align*}
Applying now Lemma \ref{lm:conv_est4} (b) to both members above, we get
$$
I_1 \leq c e^{C_{12}} \left(t [h(t)]^{\gamma} f(|x|-r_0) \int_0^1  \left(G^{-1}(s)\right)^{\gamma}\, ds + t G(r_0/h(t)) f(|x|-r_0) r_0^{\gamma}\right),
$$
with $C_{12}=C_{12}(r_0,1)$ and finally, by using Lemma \ref{lm:useful} (a) and noting that $\int_0^1  \left(G^{-1}(s)\right)^{\gamma} ds <\infty$, we obtain  
$$
I_1 \leq c_1 t [h(t)]^{\gamma} f(|x|), \quad t \in (0,t_0],
$$
where $c_1=c_1(r_0)$. It is enough to estimate $I_2$ and $G(r_0/(2h(t)))$. We deduce directly from Lemma \ref{lm:conv_est4} (a) that 
$$
I_2 \leq e^{C_{11}} f(|x|), \quad t \in (0,t_0].
$$
Also, it follows from \cite[Lemma 3.6.22]{J1} that $\Psi(r) \leq 2 \Psi(1) (1+r^2)$, $r>0 $, and, in consequence,
$$
t[h(t)]^{\gamma} = \frac{[h(t)]^{\gamma}} {\Psi\left(\frac{1}{h(t)}\right)} \geq \frac{c_2 [h(t)]^{\gamma}}{1 + \frac{1}{[h(t)]^2}} = \frac{c_2 [h(t)]^{{\gamma}+2}}{1 + [h(t)]^2} \geq \frac{c_3}{1+r_0^2} [h(t)]^{{\gamma}+2} \geq c_4 G(r_0/(2h(t))),
$$
where $c_4=c_4(\Psi,r_0)$. Finally, we obtain 
$$
p_t(x+tb_{h(t)}) \leq C_6 [h(t)]^{-d} I \leq c_5 [h(t)]^{-d} \left(I_1 + G(r_0/(2h(t))) I_2\right) \leq c_6 t \ [h(t)]^{\gamma-d} \ f(|x|),
$$
with $c_6=c_6(\Psi,r_0)$, for $t \in (0,t_0]$ and $|x| \geq 4r_0$. This completes the proof. 
\end{proof}

\smallskip

\begin{proof}[Proof of Theorem $\ref{th:main3}$]
First note that under the assumption $g(x) \asymp f(|x|)$, $x \in \R^d$, the condition $\textbf{(D)}$ holds with $f$ and $\gamma = d$. Consider the implication \textsc{(3.1)} $\Rightarrow$ \textsc{(3.2)}. The upper bound in \textsc{(3.2)} is a direct consequence of Lemma \ref{lm:useful_2} and Theorem \ref{th:main2} with $t_0:= t_p \wedge 1/\Psi(1/r_0)$ and $R=4r_0$. The lower bound follows from \cite[Theorem 2]{KSz1} under the condition \textsc{(3.1)}. Indeed, by \cite[Theorem 2]{KSz1} we have
$$
p_{t}(x+tb) \geq c t f(|x|+c_1h(t_0)), \quad |x| \geq h(t_0), \ t \in (0,t_0].
$$
(the constant $C_6$ in the estimate (7) of \cite{KSz1} may be assumed to be smaller than $1$). By Lemma \ref{lm:useful} (a), we conclude that for every $|x| \geq 4r_0$ and $t \in (0,t_0]$ we have
$$
p_{t}(x+tb) \geq c_2 t g(x),
$$
which completes the proof of the first implication.

To prove the converse implication we assume that the estimates \textsc{(3.2)} hold. Let $r_0=R/2$. By the both bounds in \textsc{(3.2)} and by the semigroup property, we have
\begin{align*}
 g_{r_0} * g_{r_0}(x) & \leq c_3 \left(\frac{2}{t_0}\right)^2 \int_{|y-x|>r_0, \ |y| > r_0} p_{t_0/2}\left(x-y+\frac{t_0}{2}b\right) p_{t_0/2}\left(y+\frac{t_0}{2} b\right) dy \\ & \leq c_4 \left(\frac{2}{t_0}\right)^2 p_{t_0}(x+t_0b) \leq \frac{c_5}{t_0} g(x),
\end{align*}
for all $|x| \geq 2r_0$. The proof is complete.
\end{proof}

\section{Proofs of Theorems \ref{th:main5} and \ref{th:main1}, and related results} \label{sec:proofs2}

In Lemma \ref{lem:getE} below we collect some basic properties of the L\'evy-Khintchine exponents corresponding to the L\'evy measures investigated in Theorems \ref{th:main1}-\ref{th:main3}. In particular, we show that the condition \textsc{(2.1)} implies \textbf{(E)}. 
This will be used in the proofs of Theorem \ref{th:main5} and Proposition \ref{prop:main5} below.


\begin{lemat} \label{lem:getE}
Let
$\nu(dx)=g(x)dx$ be a L\'evy measure such that $\nu(\Rdwithouto)=\infty$, $g(y)=g(-y)$ and $g(x) \asymp f(|x|)$, $x \in \Rdwithouto$, for some nonincreasing function $f:\:(0,\infty)\to [0,\infty)$. Moreover, assume that $b \in \Rd$. Then the following hold.
\begin{itemize}
\item[(a)] There exists a constant $C_{13}$ such that 
$$
C_{13} \Psi(|\xi|) \leq \Re \Phi(\xi) \leq \Psi(|\xi|), \quad \xi \in \R^d \backslash \left\{0\right\}.
$$

\item[(b)] The condition \textsc{(2.1)} is equivalent to the property that there are $\alpha_1, \alpha_2 \in (0,2)$, $C_{14}, C_{15} >0$ and $s_0>0$ such that
\begin{align} \label{eq:scaling_old}
C_{14} \lambda^{\alpha_1} \Psi(s) \leq \Psi(\lambda s) \leq C_{15} \lambda^{\alpha_2} \Psi(s), \quad \lambda \geq 1, \quad s \geq s_0, 
\end{align}
that is, $\Psi$ has weak lower and upper scaling properties with indices $\alpha_1$ and $\alpha_2$ at infinity (see e.g. \cite[(17)-(18)]{BGR13}).
\item[(c)] If there are $\alpha >0$, $s_0>0$ and a constant $C_{16}>0$ such that 
\begin{align} \label{eq:wlsc}
\Psi(\lambda s) \geq C_{16} \lambda^{\alpha} \Psi(s), \quad \lambda \geq 1, \quad s \geq s_0,
\end{align}
then the condition \textbf{(E)} holds for all $t \in (0,1/\Psi(s_0))$.
\end{itemize}
\end{lemat}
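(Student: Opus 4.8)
The plan is to prove Lemma~\ref{lem:getE} part by part, using only elementary properties of $\Psi$, the profile comparability $g(x)\asymp f(|x|)$, and the basic facts collected in Preliminaries (in particular \eqref{eq:Lm<Psi}).

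\textbf{Part (a).} The upper bound $\Re\Phi(\xi)\leq\Psi(|\xi|)$ is immediate from the definition $\Psi(r)=\sup_{|\eta|\leq r}\Re\Phi(\eta)$. For the lower bound I would exploit symmetry: since $g$ is even, $\Re\Phi(\xi)=\int_{\Rdwithouto}(1-\cos\scalp{\xi}{y})\,\nu(dy)$, which is already nonnegative and has no drift contribution. The standard trick is to compare $\Re\Phi(2\xi)$ with $\Re\Phi(\xi)$: using $1-\cos(2u)=(1-\cos u)(1+\cos u)\leq 2(1-\cos u)$ pointwise, one gets $\Re\Phi(2\xi)\leq 4\Re\Phi(\xi)$ (the factor $4$ coming from $1-\cos 2u \le 4(1-\cos u)$, or more carefully using $1-\cos 2u = 2\sin^2 u \le 2(2(1-\cos u)) $ ... in any case a universal constant). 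Iterating, $\Re\Phi$ grows at most polynomially along dyadic dilations, and since $\Psi(r)$ is the running supremum of $\Re\Phi$ over the ball of radius $r$, a routine argument (pick $\eta$ with $|\eta|\le|\xi|$ nearly attaining $\Psi(|\xi|)$, then dilate it down to have norm comparable to $|\xi|$ up to a bounded factor, losing only a constant) yields $\Psi(|\xi|)\leq C\,\Re\Phi(\xi)$. Alternatively one can simply cite that such a comparison is well known for symmetric exponents; the paper already invokes \cite[Proposition 1]{KSz1} and \cite{Grz2013} for the doubling of $\Psi$ in \eqref{eq:Lm<Psi}, and the same references give (a).

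\textbf{Part (b).} This is a reformulation of \textsc{(2.1)} in terms of weak scaling. One direction: if \eqref{eq:scaling_old} holds, then combining the lower scaling bound with \eqref{eq:from4}-type estimates — namely $\Psi(1/r)\geq c\,r^{-\alpha_1}\Psi(1/r_1)$ for $r$ small and, from \eqref{eq:Lm<Psi}, $\Psi(1/r)\gtrsim r^d f(r)$ is \emph{not} what we want; rather we need the reverse $\Psi(1/|x|)\lesssim |x|^d g(x)$ — I would instead use the upper scaling $\Psi(\lambda s)\leq C_{15}\lambda^{\alpha_2}\Psi(s)$ with $\alpha_2<2$ together with the integral representation of $\Psi$ and the comparability $g\asymp f$ to bound $\Psi(1/|x|)$ by a dyadic sum $\sum_k 2^{-k d}(\,\cdot\,)$ that telescopes against $f(|x|)$, exactly as in the isotropic unimodal computation of \cite{BGR13}. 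The converse direction is the more delicate one: from \textsc{(2.1)}, i.e. $\Psi(1/|x|)\leq L_2|x|^d g(x)$ for small $|x|$, one must extract \emph{both} scaling inequalities. The upper scaling index $\alpha_2<2$ follows from the general bound $\Psi(r)\leq 2\Psi(1)(1+r^2)$ combined with a De Bruijn / Matuszewska-index argument: \textsc{(2.1)} forces $\Psi(r)/r^2\to 0$ in an averaged sense, which upgrades to weak upper scaling with some $\alpha_2<2$. The lower scaling $\alpha_1>0$ is automatic because $\Psi$ is nondecreasing and, being a Lévy exponent with infinite mass, $\sup\Psi=\infty$, so some positive power is dominated — this again is in \cite{BGT}/\cite{BGR13}. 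I expect to essentially quote the equivalence from \cite[Section~3]{BGR13} (their (17)-(18)) after noting via (a) that $\Re\Phi\asymp\Psi$, so the passage between $\Phi$ and $\Psi$ costs only constants; the only new input needed is that \textsc{(2.1)} translates the profile $f$ into control of $\Psi$, which is where $g\asymp f$ and the doubling of $f$ around zero (inherited from $\nu$) enter.

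\textbf{Part (c).} This is the most concrete computation and the one I would write out in full. Assume \eqref{eq:wlsc}: $\Psi(\lambda s)\geq C_{16}\lambda^\alpha\Psi(s)$ for $\lambda\geq1$, $s\geq s_0$. Fix $t\in(0,1/\Psi(s_0))$, so that $\Psi^{-1}(1/t)\geq s_0$, i.e. $1/h(t)\geq s_0$. I would split $\int_{\Rd}e^{-t\Re\Phi(\xi)}|\xi|\,d\xi$ into $|\xi|\leq 1/h(t)$ and $|\xi|>1/h(t)$. On the inner region, bound $e^{-t\Re\Phi(\xi)}\leq 1$ and integrate $|\xi|$ to get $\lesssim [h(t)]^{-d-1}$. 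On the outer region, write $|\xi|=\lambda/h(t)$ with $\lambda\geq1$; by part (a), $\Re\Phi(\xi)\geq C_{13}\Psi(|\xi|)=C_{13}\Psi(\lambda/h(t))\geq C_{13}C_{16}\lambda^\alpha\Psi(1/h(t))=C_{13}C_{16}\lambda^\alpha/t$ (using $s=1/h(t)\geq s_0$ in \eqref{eq:wlsc} and $\Psi(1/h(t))=1/t$). Hence $e^{-t\Re\Phi(\xi)}\leq e^{-C_{13}C_{16}\lambda^\alpha}$, and changing to polar/radial coordinates,
\begin{align*}
\int_{|\xi|>1/h(t)} e^{-t\Re\Phi(\xi)}|\xi|\,d\xi
&\leq c_d [h(t)]^{-d-1}\int_1^\infty e^{-C_{13}C_{16}\lambda^\alpha}\lambda^{d}\,d\lambda
= c' [h(t)]^{-d-1},
\end{align*}
the remaining integral being a finite constant depending only on $d,\alpha,C_{13},C_{16}$. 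Summing the two contributions gives \textbf{(E)} with $L_0$ depending on those quantities and $t_p=1/\Psi(s_0)$. The main obstacle across the whole lemma is really part (b), specifically the converse implication, where one has to manufacture the \emph{upper} scaling index strictly below $2$ out of the pointwise inequality \textsc{(2.1)} rather than out of an asymptotic hypothesis; for that I would lean on the Matuszewska-index machinery of \cite{BGT} exactly as \cite{BGR13} does, reducing it to a statement about the regularly-varying-type behaviour of $\Psi$, and only sketch the standard argument.
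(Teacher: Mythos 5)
Your part (c) is correct and, in fact, more explicit than the paper itself, which simply refers to the argument of \cite[Lemma 16]{BGR13}; your computation ($1/h(t)\geq s_0$ from $t<1/\Psi(s_0)$, use part (a) and \eqref{eq:wlsc} to get $t\Re\Phi(\xi)\geq C_{13}C_{16}(|\xi|h(t))^\alpha$ on the outer region, then scale) is exactly the intended reasoning and is complete.

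There is however a genuine gap in your part (a). The dyadic-dilation trick $1-\cos 2u\leq 4(1-\cos u)$ does give $\Re\Phi(2\xi)\leq 4\,\Re\Phi(\xi)$, i.e.\ doubling of $\Re\Phi$ \emph{along each ray}, but it cannot by itself yield $\Psi(|\xi|)\leq C\,\Re\Phi(\xi)$, because $\Psi(|\xi|)=\sup_{|\eta|\leq|\xi|}\Re\Phi(\eta)$ may be attained at an $\eta$ pointing in a direction having nothing to do with $\xi/|\xi|$, and $\Re\Phi$ is not radial for a general symmetric $\nu$. Dilating $\eta$ to make $|\eta|$ comparable to $|\xi|$ does not bring $\eta$ anywhere near $\xi$, so the ``losing only a constant'' step does not close. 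The hypothesis $g\asymp f(|\cdot|)$ is indispensable here and is what the paper actually uses: it sets $\Phi_0(\xi)=\int(1-\cos\scalp{\xi}{y})f(|y|)\,dy$, notes $\Re\Phi\asymp\Phi_0$ by comparability of the densities, hence $\Psi\asymp\Psi_0$ for the corresponding running suprema, and then invokes \cite[Proposition 1]{Grz2013} for the non-trivial fact $\Phi_0\asymp\Psi_0$ valid for isotropic unimodal exponents. Without passing to $\Phi_0$ there is no way to compare $\Re\Phi$ to its own running supremum by elementary inequalities.

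Part (b) is also gappier than you suggest. The paper's route is concrete: from \textsc{(2.1)}, together with \eqref{eq:Lm<Psi} and the $\asymp$ in \eqref{eq:from4}, one obtains the two-sided comparison $\Psi_0(1/r)\asymp r^d f(r)$ on $(0,2r_0]$, and \emph{this} is exactly the hypothesis of \cite[Theorem 26]{BGR13}, which then hands over both scaling indices $\alpha_1,\alpha_2\in(0,2)$ for $\Psi_0$ (hence for $\Psi$ by comparability). Your sketch skips the step of producing the two-sided profile identity $\Psi_0(1/r)\asymp r^d f(r)$, and your suggested substitute for the upper scaling --- the a priori bound $\Psi(r)\leq 2\Psi(1)(1+r^2)$ plus a Matuszewska-index argument --- does not by itself give an index $\alpha_2$ strictly below $2$ (it would not exclude a symbol behaving like $r^2/\log r$). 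If you flesh out (b), derive $\Psi_0(1/r)\asymp r^d f(r)$ first and then cite \cite[Theorem 26]{BGR13} for the equivalence; that is the mechanism the paper relies on.
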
 

\begin{proof}
For $\xi \in \Rd$ we define $\Phi_0(\xi)=\Phi_0(|\xi|):= \int_{\Rd} (1-\cos(\scalp{\xi}{y})) f(|y|) dy$. It is known that there is an isotropic unimodal L\'evy process in $\Rd$ with the L\'evy-Khintchine exponent $\Phi_0$ and the L\'evy measure $\nu_0(dy)=f(|y|) dy$ (for the formal definition and further details on unimodal L\'evy processes we refer the reader to \cite{W83}). By comparability $g(x) \asymp f(|x|)$, $x \in \Rd$, we have
\begin{align} \label{eq:comp_Phis0}
\Re \Phi (\xi) \asymp \Phi_0(\xi), \quad \xi \in \R^d.
\end{align}
This and \cite[Proposition 1]{Grz2013} yields
\begin{align} \label{eq:comp_Phis}
\Psi(|\xi|) = \sup_{|z| \leq |\xi|} \Re \Phi(z) \asymp \sup_{|z| \leq |\xi|} \Phi_0 (z)=: \Psi_0(|\xi|) \asymp \Phi_0(\xi), \quad |\xi|>0.
\end{align}
The both properties \eqref{eq:comp_Phis0} and \eqref{eq:comp_Phis} give the assertion (a) of the lemma. 

We now prove (b). Suppose first that \textsc{(2.1)} holds. Then, by the inequality $\Psi(1/|x|) \leq L_2 |x|^d g(x)$, $0<|x| \leq 2r_0$, and by \eqref{eq:Lm<Psi} and the same argument as in \eqref{eq:from4}, we get
$$
\Psi_0 (1/r) \asymp r^d f(r), \quad r \in (0,2r_0],
$$
and finally, we derive from \cite[Theorem 26]{BGR13} that the function $\Psi_0$ has the property that there are $\alpha_1, \alpha_2 \in (0,2)$, $c_1, c_2 >0$ and $s_0>0$ such that
\begin{align} \label{eq:scaling}
c_1 \lambda^{\alpha_1} \Psi_0(s) \leq \Psi_0(\lambda s) \leq c_2 \lambda^{\alpha_2} \Psi_0(s), \quad \lambda \geq 1, \quad s \geq s_0.
\end{align}
By \eqref{eq:comp_Phis} also the function $\Psi$ has the scaling property as in \eqref{eq:scaling}. The converse implication in (b) uses exactly converse argument and it is omitted.

Since by (a) we have $\Re \Phi(\xi) \asymp \Psi(|\xi|)$ for $\xi \in \R^d \backslash \left\{0\right\}$, the property in assertion (c) can be established by following the argument (estimate) in \cite[Lemma 16]{BGR13}.
\end{proof}
\noindent


\smallskip

\begin{proof}[Proof of Theorem $\ref{th:main5}$]
We first consider the implication \textsc{(2.1)} $\Rightarrow$ \textsc{(2.2)}. Assume \textsc{(2.1)} and note that by Lemma \ref{lem:getE} the condition \textbf{\textbf{(E)}} is satisfied with some $t_p>0$. Moreover, observe that by \textsc{(2.1)}, \eqref{eq:Lm<Psi} and \eqref{eq:from4}, and the monotonicity of $f$, we have $f(r) \asymp \Psi(1/r)r^{-d}$ for $r \in (0,r_0]$ and, consequently, the doubling property $f(r) \asymp f(2r)$ holds for all $r \in (0,r_0/2]$. Thus, by \cite[Theorem 2]{KSz1} we obtain that there are $t_0 \in (0,t_p]$ and $\theta >0$ such that $\theta h(t_0) \leq R:=r_0/2$, for which the both lower bounds in \textsc{(2.2)} hold. To prove the upper bound define $f_0(r):=f(r) \vee f(r_0/2)$, $r>0$. Since $f(r) \leq f_0(r)$ for $r>0$, and $f_0(r)$ has a doubling property for all $r>0$, also the assumptions of \cite[Theorem 1]{KSz1} are satisfied with such profile function $f_0$ and we get
$$
p_t(x+tb) \leq c h(t)^{-d}\min\left\{1, t h(t)^{d} f_0(|x|) + e^{-c_1\frac{|x|}{h(t)} \log \left(1+ c_2 \frac{|x|}{h(t)}\right)} \right\}, \quad |x| >0, \quad t \in (0,t_p],
$$
with some constants $c, c_1, c_2>0$. In particular,
$$
p_t(x+tb) \leq c h(t)^{-d}\min\left\{1, t h(t)^{d} f(|x|) + e^{-c_1\frac{|x|}{h(t)} \log \left(1+ c_2 \frac{|x|}{h(t)}\right)} \right\}, \quad |x| \in (0,r_0/2], \quad t \in (0,t_p].
$$ 
It is enough to estimate the exponentially-logarithmic member in the above estimate. By \textsc{(2.1)}, for $|x| \in (0,r_0/2]$ and $t \in (0,t_p]$, we have
$$
t h(t)^{d} f(|x|) \geq L_2^{-1} \frac{h(t)^{d}}{|x|^d} \ \frac{\Psi\left(\frac{1}{|x|}\right)}{\Psi\left(\frac{1}{h(t)}\right)}.
$$
If $|x| \leq h(t)$, then we easily have $t h(t)^{d} f(|x|) \geq L_2^{-1} \geq L_2^{-1} e^{-c_1\frac{|x|}{h(t)} \log \left(1+ c_2 \frac{|x|}{h(t)}\right)}$. 
When $|x| > h(t)$, then by the doubling property of $\Psi$ in \eqref{eq:Lm<Psi}, we also get
$$
t h(t)^{d} f(|x|) \geq L_2^{-1} \frac{h(t)^{d}}{|x|^d} \ \frac{\Psi\left(\frac{1}{|x|}\right)}{\Psi\left(\frac{|x|}{h(t)} \frac{1}{|x|}\right)} \geq L_2^{-1} L_6^{-1} \left(\frac{|x|}{h(t)}\right)^{-d-\log_2 L_6} \geq c_3 e^{-c_1\frac{|x|}{h(t)} \log \left(1+ c_2 \frac{|x|}{h(t)}\right)}.
$$ 
In particular, we see that both upper bounds in \textsc{(2.2)} also hold for $R=r_0/2$, $t \in (0,t_0]$ and the same $\theta$.
 
We now show the opposite implication. Assume that \textsc{(2.2)} holds and let 
$$
r(t):=2 \pi \left(\int_{\R^d} e^{-t \Re \Phi(\xi)} d\xi\right)^{-\frac{1}{d}} = [p_t(tb)]^{-\frac{1}{d}}, \quad t>0.
$$ 
By the first two-sided bound in \textsc{(2.2)}, we have $c_4^{-1} h(t) \leq r(t) \leq c_4 h(t)$, $t \in (0,t_0]$, with some constant $c_4 \geq 1$. Since the function $r(t)$ is continuous in $(0,t_0]$, it is also onto the interval $(0,r(t_0)]$. Moreover, by the both bounds in \textsc{(2.2)}, for every $t \in (0,t_0]$, we have
$$
[h(t)]^{-d} \leq c_5 t f(\theta h(t)) = c_5 \frac{f(\theta h(t))}{\Psi(1/h(t))},
$$
and by the comparability of $r(t)$ and $h(t)$, we also get
\begin{align*}
[h(t)]^{-d} \geq (c_4^{-2} \theta)^d [c_4^{-1} \theta r(t)]^{-d}, \quad t \in (0,t_0],
\end{align*}
and
\begin{align*}
\frac{f(\theta h(t))}{\Psi(1/h(t))} \leq \frac{f(c_4^{-1} \theta r(t))}{\Psi((c_4^{-2}\theta)/(c_4^{-1}\theta r(t)))}, \quad t \in (0,t_0].
\end{align*}
Therefore 
$$
(c_4^{-2} \theta)^d [c_4^{-1} \theta r(t)]^{-d} \leq c_5 \frac{f(c_4^{-1} \theta r(t))}{\Psi((c_4^{-2}\theta)/(c_4^{-1}\theta r(t)))}, \quad t \in (0,t_0].
$$
Let now $r_0:= (2c_4)^{-1} \theta r(t_0)$ and note that for every $r \in (0, 2r_0]$ there is $t \in (0,t_0]$ such that $r=c_4^{-1} \theta r(t)$. We conclude that by doubling property and monotonicity of $\Psi$ it holds that
$$
\Psi(1/r)\leq c_6 f(r)r^d, \quad r \in (0,2r_0], 
$$
which is exactly \textsc{(2.1)}.
\end{proof}

\smallskip

The following proposition may be seen as the complement to Theorem \ref{th:main5}. One of its important consequences is that the bounds from \textsc{(2.2)} always imply two-sided bounds in the minimum form as in \textsc{(2.3)} below, while the converse implications holds true under the assumption \textbf{(E)}.

\begin{prop} \label{prop:main5}
Let
$\nu(dx)=g(x)dx$ be a L\'evy measure such that $\nu(\Rdwithouto)=\infty$, $g(y)=g(-y)$ and $g(x) \asymp f(|x|)$, $x \in \Rdwithouto$, for some nonincreasing function $f:\:(0,\infty)\to [0,\infty)$. Moreover, assume that $b \in \Rd$. Consider the additional conditions \textsc{(2.3)} and \textsc{(2.4)}

\begin{itemize}
\item[\textsc{(2.3)}] There exist $t_0 > 0$, $R>0$ and constants $C_{17}, C_{18}$ such that we have
\begin{align*}
  C_{17}   \min\left\{[h(t)]^{-d}, t g(x)\right\}\leq 
  p_t(x+tb) 
  & \leq  C_{18} \min\left\{ [h(t)]^{-d}, t g(x)\right\}, \quad t \in (0,t_0], \quad 0<|x| \leq R.
\end{align*}
\item[\textsc{(2.4)}] There exist $t_0 > 0$, $R>0$ and a constant $C_{19}$ such that we have
\begin{align*}
  p_t(x+tb) 
  & \leq  C_{19} \, t \, g(x), \quad t \in (0,t_0], \quad 0<|x| \leq R.
\end{align*}
\end{itemize}
Then the following hold. 
$$
\textsc{(2.1)} \ \Longleftrightarrow \ \textsc{(2.2)} \ \Longleftrightarrow \ \left[\bf{(E)} \ \mbox{and} \ \textsc{(2.3)} \right] \ \Longleftrightarrow \ \left[\bf{(E)} \ \mbox{and} \ \textsc{(2.4)} \right] 
$$
\end{prop}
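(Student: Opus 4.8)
The plan is to close the directed cycle
$$\textsc{(2.1)}\ \Rightarrow\ \textsc{(2.2)}\ \Rightarrow\ [\textbf{(E)}\ \text{and}\ \textsc{(2.3)}]\ \Rightarrow\ [\textbf{(E)}\ \text{and}\ \textsc{(2.4)}]\ \Rightarrow\ \textsc{(2.1)},$$
which gives the pairwise equivalence of all four statements. The first arrow is Theorem~\ref{th:main5}. For the second, Theorem~\ref{th:main5} and Lemma~\ref{lem:getE}(b)--(c) first yield \textbf{(E)} together with $f(r)\asymp\Psi(1/r)r^{-d}$ for small $r$ (hence doubling of $f$), exactly as in the proof of Theorem~\ref{th:main5}; after shrinking $t_0$ and $R$ so that $h(t_0)\le r_0$ and $R\le r_0$, one upgrades the two estimates of \textsc{(2.2)} to the minimum form \textsc{(2.3)} using that, for $|x|\le\theta h(t)$, $t\,g(x)\asymp t\,f(|x|)\gtrsim t\,f(\theta h(t))\asymp t\,\Psi(1/h(t))[h(t)]^{-d}=[h(t)]^{-d}$ (since $t\,\Psi(1/h(t))=1$), whereas for $\theta h(t)\le|x|\le R$, $t\,g(x)\asymp t\,f(|x|)\lesssim t\,f(\theta h(t))\asymp[h(t)]^{-d}$; in both ranges $p_t(x+tb)$ supplied by \textsc{(2.2)} matches $\min\{[h(t)]^{-d},t\,g(x)\}$. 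The third arrow is trivial because $\min\{[h(t)]^{-d},t\,g(x)\}\le t\,g(x)$.

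The substantial part is $[\textbf{(E)}\ \text{and}\ \textsc{(2.4)}]\ \Rightarrow\ \textsc{(2.1)}$, where a matching \emph{lower} bound for $p_t$ near the origin must be produced analytically and compared with \textsc{(2.4)}. Replacing $t_0$ by $t_0\wedge t_p$, by \textbf{(E)} we have $p_t\in C^1_b(\Rd)$ and $p_t(tb)=(2\pi)^{-d}\int_{\Rd}e^{-t\Re\Phi(\xi)}\,d\xi$. Since $\Re\Phi(\xi)\le\Psi(|\xi|)$ and $\Psi(1/h(t))=1/t$, restricting to $\{|\xi|\le 1/h(t)\}$ gives $p_t(tb)\ge c_0[h(t)]^{-d}$ with $c_0=(2\pi)^{-d}e^{-1}|B(0,1)|$; splitting at $|\xi|=1/h(t)$ and using \textbf{(E)} on the outer part ($\int_{|\xi|\ge 1/h(t)}e^{-t\Re\Phi}\,d\xi\le h(t)\int_{\Rd}|\xi|e^{-t\Re\Phi}\,d\xi\le L_0[h(t)]^{-d}$) gives $p_t(tb)\le C[h(t)]^{-d}$, so $p_t(tb)\asymp[h(t)]^{-d}$ and $r(t):=[p_t(tb)]^{-1/d}\asymp h(t)$. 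Differentiating the representation and using \textbf{(E)} again, $\|\nabla p_t\|_\infty\le(2\pi)^{-d}\int_{\Rd}|\xi|e^{-t\Re\Phi(\xi)}\,d\xi\le(2\pi)^{-d}L_0[h(t)]^{-d-1}$, so for $|x|\le\varepsilon h(t)$ with a fixed $\varepsilon=\varepsilon(d,L_0)$ small enough, $|p_t(x+tb)-p_t(tb)|\le\tfrac12 c_0[h(t)]^{-d}$, hence $p_t(x+tb)\ge\tfrac12 c_0[h(t)]^{-d}$. Combining with \textsc{(2.4)}, for $0<|x|\le\varepsilon h(t)\wedge R$ and $t\in(0,t_0]$ we obtain
$$f(|x|)\ \asymp\ g(x)\ \ge\ \frac{c_0}{2C_{19}}\,\frac{[h(t)]^{-d}}{t}\ =\ \frac{c_0}{2C_{19}}\,[h(t)]^{-d}\,\Psi(1/h(t)).$$

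It remains to transport this to all small scales. As in the proof of Theorem~\ref{th:main5}, $t\mapsto r(t)$ is continuous on $(0,t_0]$ (dominated convergence), and since $r(t)\asymp h(t)\to 0$ as $t\to0^{+}$ while $r(t_0)>0$, its connected range contains $(0,r(t_0)]$. Fix $a\ge1$ with $a^{-1}h(t)\le r(t)\le a\,h(t)$ on $(0,t_0]$. Given small $r>0$, choose $t\in(0,t_0]$ with $r(t)=a\varepsilon^{-1}r$; then $h(t)\ge a^{-1}r(t)=\varepsilon^{-1}r$, so $|x|:=r\le\varepsilon h(t)$, and $h(t)\le a\,r(t)=a^2\varepsilon^{-1}r$, so $[h(t)]^{-d}\ge(a^2\varepsilon^{-1}r)^{-d}$ and, by the doubling of $\Psi$ in \eqref{eq:Lm<Psi}, $\Psi(1/h(t))\ge c\,\Psi(1/r)$ with $c=c(a,\varepsilon,L_6)$. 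Plugging $|x|=r$ into the last display gives $f(r)\gtrsim r^{-d}\Psi(1/r)$, i.e.\ $\Psi(1/|x|)\le L_2|x|^d g(x)$ for all sufficiently small $|x|$, which is \textsc{(2.1)}. The main obstacle is precisely this last implication: since probabilistic lower bounds are not allowed, the near-diagonal lower estimate for $p_t$ has to be squeezed out of the Fourier inversion formula together with the gradient control from \textbf{(E)}, and then spread to every small spatial scale via the continuity and near-surjectivity of $t\mapsto r(t)$ and the doubling of $\Psi$; one must also verify that $r(t)\asymp h(t)$ holds under \textbf{(E)} alone, where the weighted integral in \textbf{(E)} is exactly what is needed.
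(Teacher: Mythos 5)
Your proof is correct and its core coincides with the paper's, although you organize it as a single directed cycle $\textsc{(2.1)}\Rightarrow\textsc{(2.2)}\Rightarrow[\textbf{(E)}\text{ and }\textsc{(2.3)}]\Rightarrow[\textbf{(E)}\text{ and }\textsc{(2.4)}]\Rightarrow\textsc{(2.1)}$, whereas the paper proves several individual implications ($\textsc{(2.1)}\Leftrightarrow\textsc{(2.2)}$ via Theorem~\ref{th:main5}, $\textsc{(2.1)}\Rightarrow\textbf{(E)}$ via Lemma~\ref{lem:getE}, $\textsc{(2.3)}\Rightarrow\textsc{(2.4)}$ trivially, $\textsc{(2.2)}\Rightarrow\textsc{(2.3)}$, and the two closing arrows $[\textbf{(E)}\text{ and }\textsc{(2.3)}]\Rightarrow\textsc{(2.1)}$ and $[\textbf{(E)}\text{ and }\textsc{(2.4)}]\Rightarrow\textsc{(2.1)}$). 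The one substantive methodological difference lies in the near-diagonal lower bound $p_t(x+tb)\gtrsim[h(t)]^{-d}$ on $|x|\leq\theta h(t)$: the paper simply cites \cite[Lemma 7]{KSz1}, whereas you rederive it from \textbf{(E)} alone --- first pinning down $p_t(tb)\asymp[h(t)]^{-d}$ by Fourier inversion, lower-bounding the integral on $\{|\xi|\le 1/h(t)\}$ via $\Re\Phi\le\Psi(|\cdot|)\le\Psi(1/h(t))=1/t$ and upper-bounding the tail by \textbf{(E)}, and then propagating to a ball of radius $\varepsilon h(t)$ through the gradient estimate $\|\nabla p_t\|_\infty\le(2\pi)^{-d}L_0[h(t)]^{-d-1}$ furnished by \textbf{(E)}. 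This self-contained derivation makes transparent that \textbf{(E)} alone governs the near-diagonal regime, which is an exposition gain over the opaque citation; the paper's route is of course shorter. From that point on, your scale-transport step --- continuity and near-surjectivity of $t\mapsto r(t)=[p_t(tb)]^{-1/d}\asymp h(t)$, combined with doubling of $\Psi$ from \eqref{eq:Lm<Psi} --- is exactly the ``similar argument'' from the second half of the proof of Theorem~\ref{th:main5} that the paper invokes, so the two proofs converge there.
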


\begin{proof}
Recall that by Theorem \ref{th:main5} the conditions \textsc{(2.1)} and \textsc{(2.2)} are equivalent, and by Lemma \ref{lem:getE} the condition \textsc{(2.1)} implies \textbf{(E)}. We also see that \textsc{(2.3)} implies \textsc{(2.4)} (in fact with no use of \textbf{(E)}). To complete the proof, we show the implications \textsc{(2.2)} $\Rightarrow$ \textsc{(2.3)}, [\textbf{(E)} \emph{and} \textsc{(2.3)}]  $\Rightarrow$ \textsc{(2.1)} and [\textbf{(E)} \emph{and} \textsc{(2.4)}]  $\Rightarrow$ \textsc{(2.1)}. Let $t_0:= \sup\left\{t \in (0,t_p]: h(t) \leq r_0\right\}$. By taking $r=\theta h(t)$ in \textsc{(2.1)} and \eqref{eq:Lm<Psi} for $t \in (0,t_0]$, by the same argument as in \eqref{eq:from4} and the doubling property of $\Psi$, we get $1/t = \Psi(1/h(t)) \asymp \Psi(1/(\theta h(t))) \asymp f(\theta h(t)) [\theta h(t)]^d$, $t \in (0,t_0]$. This clearly gives that there are constants $c_1, c_2 \geq 1$ such that
$$
[h(t)]^{-d} \leq c_1 t g(x), \quad |x| \leq \theta h(t), \ \ t \in (0,t_0], 
$$
and
$$
t g(x) \leq c_2 [h(t)]^{-d}, \quad |x| \geq \theta h(t), \ \ t \in (0,t_0]. 
$$
With these inequalities, two-sided bounds in \textsc{(2.3)} follow directly from \textsc{(2.2)}.

Proofs of implications [\textbf{(E)} \emph{and}  \textsc{(2.3)}]  $\Rightarrow$ \textsc{(2.1)} and [\textbf{(E)} \emph{and} \textsc{(2.4)}]  $\Rightarrow$ \textsc{(2.1)} are exactly the same. Indeed, \cite[Lemma 7]{KSz1} gives that there are $\theta, c_3 >0$ such that  
$$
p_t(x+tb) \geq c_3 [h(t)]^{-d}, \quad |x| \leq \theta h(t), \ \ t \in (0,t_p].
$$
By this estimate, the upper bound in \textsc{(2.3)} or \textsc{(2.4)} and the doubling property of $\Psi$ we thus get 
$$
[h(t)]^{-d} \leq c_4 t f(\theta h(t)) = c_4 \frac{f(\theta h(t))}{\Psi(1/h(t))} \asymp \frac{f(\theta h(t))}{\Psi(1/(\theta h(t)))}, \quad t \in (0,t_0].
$$
Now, by using a similar argument as in the second part of the proof of Theorem \ref{th:main5}, we obtain that there is $r_0, c_5>0$ such that 
$$
\Psi(1/r)\leq c_5 f(r)r^d, \quad r \in (0,2r_0], 
$$
and \textsc{(2.1)} holds. The proof is complete.
\end{proof}
\noindent
For further discussion of essentiality of the condition \textbf{(E)} in Proposition \ref{prop:main5} we refer the reader to Remark \ref{rem:ex01} in the last section. 

\smallskip

We close this section by giving the formal proof of Theorem \ref{th:main1}.

\begin{proof}[Proof of Theorem $\ref{th:main1}$]
First observe that by monotonicity and strict positivity of functions $f$ and $\Psi$ one can easily extend the estimate \textsc{(1.1)} (b) to $r \in (0,R]$ for every $R>2r_0$, possibly with worse constant $L_2$ dependent on $f(R)$ and $\Psi(1/R)$. In particular, it holds for $r \in (0,16r_0]$. Moreover, note that by Lemma \ref{lem:getE} the condition \textsc{(1.1)} (b) implies \textbf{(E)}. Therefore, the implication \textsc{(1.1)} $\Rightarrow$ \textsc{(1.2)} is a direct corollary from (proofs of) Theorems \ref{th:main5} and \ref{th:main3}. Indeed, \textsc{(1.2)} is a conjuction of \textsc{(2.2)} and \textsc{(3.2)} with the same $R=4r_0$. 

Consider now the implication \textsc{(1.2)} $\Rightarrow$ \textsc{(1.1)}. If both bounds in \textsc{(1.2)} hold, then the condition \textsc{(1.1)} (b) follows from Theorem \ref{th:main5} with some $r_0>0$. Therefore, as mentioned above, by Lemma \ref{lem:getE} also the assumption \textbf{(E)} is satisfied, and the condition 
\textsc{(1.1)} (a) can be directly derived from Theorem \ref{th:main3} with the same $r_0$. 
\end{proof}


\section{Further results, discussion and examples}\label{sec:examples}

In this section we discuss our results and some of their consequences in more detail. In particular, we illustrate them by several examples.

\subsection{Symmetric and absolutely continuous L\'evy measures}

We now illustrate the outcomes of our study with the L\'evy measures with densities that are comparable to some specific profile functions. Our Theorem \ref{th:main1} is a consequence of the two separate results for small and large $x$ given in Theorems \ref{th:main5} and \ref{th:main3}, respectively. Therefore, for more transparency, below we discuss these two theorems separately. 

As we mentioned in Introduction, the case of small $x$ is somewhat better explored. In the example below we test Theorem \ref{th:main5} on some L\'evy measures with the three different types of singularity at zero. Throughout this subsection we always assume that $f:(0,\infty) \to (0,\infty)$ is a nonincreasing profile function such that $f\cdot \indyk{(0,1]} = \kappa$ with $\kappa: (0,1] \to (0,\infty)$ 
such that $\lim_{r\downarrow 0} \kappa(r) = \infty$.

\begin{example} \label{ex:ex0} {\rm
\noindent
Consider the following three types of L\'evy measures $\nu(dx)=g(x)dx$ with $g(x) = g(-x) \asymp f(|x|)$, $x \in \Rdwithouto$, where the corresponding small jump profiles $\kappa$ are as follows. 
\begin{itemize}
\item[(1)] $\kappa(r) = r^{-d}$ (low intensity of small jumps) \\
or
\item[(2)] $\kappa(r) = r^{-d-\alpha_1} \log\left(1+1/r\right)^{\alpha_2}$ if $\alpha_1 \in (0,2)$ and $\alpha_2 \geq 0$ or, if $\alpha_1 \in (0,2)$ and $\alpha_2 < 0$, then $\kappa$ is a nonincreasing function such that $\kappa(r) \asymp r^{-d-\alpha_1} \log\left(1+1/r\right)^{\alpha_2}$ (note that for $\alpha_2<0$ the function $r^{-d-\alpha_1} \log\left(1+1/r\right)^{\alpha_2}$ need not be nonincreasing for all $r \in (0,1)$)\\
or
\item[(3)] $\kappa(r) = r^{-d-2} \log\left(1+1/r\right)^{-2}$ (high intensity of small jumps).
\end{itemize} 
By direct calculations based on \cite[Proposition 1]{KSz1}, one can show that for the above three cases we have:
\begin{itemize}
\item[(1)] $\Psi(r) \asymp \log(1+r)$, \ $h(t) \asymp e^{-1/t}$, 
\item[(2)] $\Psi(r) \asymp r^{\alpha_1} (\log(1+r))^{\alpha_2}$,  \ $h(t) \asymp t^\frac{1}{\alpha_1} \left[\log\left(1+\frac{1}{t}\right)\right]^{\frac{\alpha_2}{\alpha_1}}$,
\item[(3)] $\Psi(r) \asymp r^2 (\log(1+r))^{-1}$ (not $r^2 (\log(1+r))^{-2}$),  \ $h(t) \asymp t^\frac{1}{2} \left[\log\left(1+\frac{1}{t}\right)\right]^{\frac{-1}{2}}$,
\end{itemize} 
whenever $r \geq 1$ and $t \in (0,1/\Psi(1)]$. We thus see that the condition \textsc{(2.1)} is satisfied in the case (2) with $r_0=1/2$, but it fails for (1) and (3). Theorem \ref{th:main5} states that the small time sharp two sided bounds for small $x$ of the form \textsc{(2.1)} are true for convolution semigroups with L\'evy measures as in (2) only. In the remaining cases, they should be expected in a different form. Note that in some sense the singularities at zero as in (1) and (3) are borderline for the L\'evy measures that are required to satisfy $\nu(\left\{x:|x|<1\right\})=\infty$ and $\int_{0<|x|<1} |x|^2 \nu(dx) < \infty$. Informally speaking, this means that the condition \textsc{(2.1)} is typical for the measures having some balance between these two integrability properties (cf. \cite{BGR13}). For some available results on the estimates for transition densities corresponding to L\'evy processes with slowly varying characteristic exponent as in (1), we refer the reader to \cite[pages 117-118]{BBKRSV}. Sharp small time estimates for small $x$ for processes with high intensity of small jumps as in (3) are still an open and very interesting problem (see \cite{Mimica1, Mimica2} and recent discussion in \cite[page 22]{KSz1}). 
}
\end{example}  

The next remark is devoted to Proposition \ref{prop:main5}.

\begin{uwaga} \label{rem:ex01} {\rm
\noindent
It is reasonable to ask how far are bounds in \textsc{(2.3)} and \textsc{(2.4)} from \textsc{(2.2)} and \textsc{(2.1)} or, in other words, how essential for Proposition \ref{prop:main5} is the condition \textbf{(E)}. Consider the following observations.
\begin{itemize}
\item[(1)] First note that the condition \textbf{(E)} does not imply any of the conditions \textsc{(2.3)} and \textsc{(2.4)}. Counterexamples are here the L\'evy processes with high intensities of small jumps. For instance, if the L\'evy measure has the profile for small $x$ as in Example \ref{ex:ex0} (3), then \textbf{(E)} holds true, but the estimates in \textsc{(2.3)} and \textsc{(2.4)} fail.

\item[(2)] Also, when \textbf{(E)} does not hold, \textsc{(2.4)} does not imply any of the conditions \textsc{(2.2)} and \textsc{(2.1)}. Indeed, for example, the transition densities of the isotropic geometric $\alpha$-stable processes, $\alpha \in (0,2)$, in $\R^d$ fulfil the estimates (see \cite[Theorem 5.52]{BBKRSV})
$$
p_t(x) \asymp \frac{t}{|x|^{d-t\alpha}}, \quad |x| \leq 1, \ t \in (0, 1 \wedge d/(2\alpha)),
$$
which directly implies the upper bound in \textsc{(2.4)}. However, in this case, both conditions \textsc{(2.1)} and \textsc{(2.2)} fail. Since $\Phi(\xi) = \log(1+|\xi|^{\alpha})$, also the assumption \textbf{(E)} does not hold.  
\item[(3)] Since, in general, \textbf{(E)} does not imply \textsc{(2.3)}, one can ask if there are examples of processes with densities satisfying \textsc{(2.3)}, for which \textbf{(E)} and both conditions \textsc{(2.1)} and \textsc{(2.2)} are not true. However, it seems to be very difficult to indicate or construct such example. This problem remains open.
\end{itemize}  
}
\end{uwaga}

\smallskip

We now illustrate our Theorem \ref{th:main3}. To shorten the formulations below, first we set some useful notation. Recall that by $\kappa: (0,1] \to (0,\infty)$ we denote a nonincreasing function such that $\lim_{r\downarrow 0}\kappa(r)=\infty$. In the sequel we assume that
\begin{align} \label{eq:rangembetac}
m \geq 0, \quad \beta>0, \quad 0 < c \leq \kappa(1)e^{m}
\end{align}
and
\begin{align} \label{eq:rangedelta}
\delta \geq 0 \quad \text{when} \quad m > 0 \quad \text{and} \quad \delta > d \quad \text{when} \quad m=0.
\end{align}
Below we will consider the profiles $f:= f_{\kappa, m, \beta, \delta, c}$, where
\begin{align} \label{def:off}
f_{\kappa, m, \beta, \delta, c} (s) = \indyk{(0,1]}(s) \cdot \kappa(s)  + c \ \indyk{(1,\infty)}(s) \cdot e^{-m s^{\beta}} s^{-\delta} , \quad s > 0.
\end{align}
In general, a wider range of $\delta$ can be considered in \eqref{def:off}. However, we want $f_{\kappa, m, \beta, \delta, c}$ to be a strictly positive and nonincreasing profile function for the sufficiently regular density of the L\'evy measure. Therefore, in the remaining we always restrict our attention to the settings given by \eqref{eq:rangembetac}--\eqref{eq:rangedelta}.

For functions $f_{\kappa, m, \beta, \delta, c}$ we consider a convolution condition similar to \textsc{(2.1)}. 

\medskip

\begin{itemize}
\item[\textbf{(F)}]
There exists a constant $C=C(m,\beta,\delta)>0$ such that 
$$
\int_{|y-x|>1, \ |y|>1} f_{\kappa, m, \beta, \delta, c}(|y-x|)  f_{\kappa, m, \beta, \delta, c}(|y|) dy \leq C f_{\kappa, m, \beta, \delta, c}(|x|), \quad |x| \geq 2.
$$
\end{itemize}

\medskip

We will need the following proposition which gives the characterization of {\bf (F)} in terms of defining parameters $\beta$ and $\delta$. 

\begin{prop}\label{prop:convver}
Let \eqref{eq:rangembetac}--\eqref{eq:rangedelta} be satisfied. Then the condition {\bf (F)} holds if and only if 
\begin{itemize}
\item[(a)] $m=0$ and $\delta>d$  \\ or
\item[(b)] $m>0$, $\beta \in (0,1)$ and $\delta \geq 0$ \\ or
\item[(c)] $m>0$, $\beta =1$ and $\delta > (d+1)/2$. 
\end{itemize}
\end{prop}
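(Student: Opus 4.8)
The plan is to estimate the convolution integral $\int_{|y-x|>1,\ |y|>1} f(|y-x|) f(|y|)\, dy$ directly, where $f = f_{\kappa,m,\beta,\delta,c}$, for $|x|$ large, and compare the result with $f(|x|)$. Write $I(x)$ for this integral. First I would split the region of integration according to whether $|y|\le 2$, $|y-x|\le 2$, or both $|y|>2$ and $|y-x|>2$. On the two ``near'' pieces — say $|y|\in(1,2]$ — the factor $f(|y|)$ is bounded by the constant $\kappa(1)$ (or is integrable if we use the full profile), while $f(|y-x|)\le c\,e^{-m(|x|-2)^\beta}(|x|-2)^{-\delta}$, and one checks easily that this is $\le C f(|x|)$ using monotonicity and the elementary inequality $(|x|-2)^\beta \ge |x|^\beta - C$ valid for $\beta\le 1$ (for $\beta>1$ this already fails, which is the source of the dichotomy — see below). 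The symmetric piece $|y-x|\in(1,2]$ is handled the same way. So the whole question reduces to the ``far-far'' region $\Omega_x := \{|y|>2,\ |y-x|>2\}$, where $f(|y|) = c e^{-m|y|^\beta}|y|^{-\delta}$ and $f(|y-x|) = c e^{-m|x-y|^\beta}|x-y|^{-\delta}$.

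On $\Omega_x$ the integral becomes $c^2\int_{\Omega_x} e^{-m(|y|^\beta + |x-y|^\beta)} |y|^{-\delta}|x-y|^{-\delta}\, dy$. The key is the behaviour of the exponent $\varphi(y) := |y|^\beta + |x-y|^\beta$ on the segment and off it. For $\beta<1$ the function $t\mapsto t^\beta$ is subadditive and strictly concave, so $\varphi$ is minimized (roughly) at the endpoints $y\approx 0$ or $y\approx x$, where $\varphi\approx |x|^\beta$, and $\varphi(y)\ge |x|^\beta$ everywhere by subadditivity — hence $e^{-m\varphi(y)}\le e^{-m|x|^\beta}$, the polynomial factors contribute only an $O(|x|^{\text{const}})$ loss which is absorbed since $\delta\ge 0$ is arbitrary and we only need $\le C f(|x|)$ with $f(|x|)=c|x|^{-\delta}e^{-m|x|^\beta}$... wait, that is not automatic, so one must be careful: near the endpoint $y\approx 0$ one has $|x-y|^{-\delta}\approx |x|^{-\delta}$ and $\int_{2<|y|} e^{-m|y|^\beta}|y|^{-\delta}dy$ converges, giving exactly the factor $|x|^{-\delta}$; the contribution of the ``bulk'' $|y|,|x-y|\gtrsim |x|$ is exponentially smaller because there $\varphi(y)\ge |x/2|^\beta\cdot 2 = 2^{1-\beta}|x|^\beta > |x|^\beta$. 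This yields $I(x)\le C f(|x|)$, proving sufficiency in cases (a) (the $m=0$ case, which is just the standard convolution of polynomially decaying densities and needs $\delta>d$ for integrability near each endpoint) and (b).

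For $\beta=1$ the exponent $\varphi(y) = |y|+|x-y|\ge |x|$ with equality precisely on the whole segment $[0,x]$, not just at endpoints; so the exponential alone gives no decay improvement over the tube around the segment, and the polynomial weights must do the work. Parametrizing a neighbourhood of the segment (tube of width $\rho$ at distance $s$ along it, so $|y|\approx s$, $|x-y|\approx |x|-s$, $\varphi(y)\approx |x| + \rho^2(\tfrac1s + \tfrac1{|x|-s})$), the integral over the tube is comparable to $e^{-m|x|}\int_2^{|x|-2} s^{-\delta}(|x|-s)^{-\delta}\big(\int_0^\infty e^{-m\rho^2(\frac1s+\frac1{|x|-s})}\rho^{d-2}d\rho\big)ds$, and the inner $\rho$-integral scales like $(\frac{s(|x|-s)}{|x|})^{(d-1)/2}$. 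Feeding this in, the $s$-integral behaves like $|x|^{-(d-1)/2}\int_2^{|x|/2} s^{(d-1)/2-\delta}\,ds$ near $s\approx 0$, which is $\asymp |x|^{-(d-1)/2}\cdot |x|^{(d+1)/2-\delta}\cdot[\text{something}]$ when $(d-1)/2-\delta>-1$, i.e. $\delta<(d+1)/2$, and is $\asymp |x|^{-(d-1)/2}$ (bounded) only when $\delta>(d+1)/2$ — matching $f(|x|)=c|x|^{-\delta}e^{-m|x|}$ exactly in the latter case and failing by a polynomial factor $|x|^{(d+1)/2-\delta}\to\infty$ relative to $f(|x|)$ in the former; the endpoint $\delta=(d+1)/2$ gives an extra $\log$ and also fails. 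This proves (c) and its sharpness. For $\beta>1$, $t\mapsto t^\beta$ is superadditive, so $\varphi$ on the segment is \emph{smaller} than $|x|^\beta$ (minimized at the midpoint where $\varphi = 2^{1-\beta}|x|^\beta \ll |x|^\beta$), and the contribution of the midpoint region swamps $f(|x|)$ by an exponentially large factor $e^{m(1-2^{1-\beta})|x|^\beta}$, so {\bf (F)} fails — this, together with the $\beta=1$, $\delta\le(d+1)/2$ case, establishes the ``only if'' direction outside (a)--(c).

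The main obstacle is the $\beta=1$ borderline analysis: one needs a genuine Laplace/tube estimate of the convolution of two exponential-times-polynomial profiles along the whole degenerate manifold (the segment $[0,x]$), keeping track of the exact power of $|x|$ generated by the Gaussian transverse integration and the subsequent longitudinal integration, in order to pin down the threshold $(d+1)/2$ and show the endpoint fails. For the necessity direction in the $\beta>1$ and $\beta<1$ asymmetric cases I would simply exhibit, respectively, the midpoint region (for $\beta>1$) or check that a near-endpoint estimate already forces $\delta>d$ when $m=0$; these are comparatively routine once the main $\beta=1$ computation is in place. I would also invoke $g\asymp f$ and \eqref{eq:Lm<Psi} only implicitly — the statement is purely about the profile $f$, so the proof is an exercise in multivariable integration with no probabilistic input.
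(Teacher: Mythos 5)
Your plan follows essentially the same route as the paper: the sufficiency for $\beta=1$, $\delta>(d+1)/2$ is proved there by exactly your tube/Laplace computation (rotate so $x=(x_1,0,\dots,0)$, use the identity $\sqrt{(x_1-s)^2+r^2}+\sqrt{s^2+r^2}=x_1+r^2(\cdots)$, substitute $r=\sqrt{s}\,u$, and reduce to $\int_1^\infty s^{(d-1)/2-\delta}\,ds$, which converges precisely when $\delta>(d+1)/2$); the failure at $\beta=1$, $\delta\le(d+1)/2$ is shown by the same lower estimate with the inequality reversed; and the failure for $\beta>1$ is shown by your midpoint region $\{|y-x/2|<1\}$.

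The one place where your sketch glosses over a real point is the $\beta\in(0,1)$ sufficiency. Plain subadditivity $|y|^\beta+|x-y|^\beta\ge|x|^\beta$ only yields $I(x)\le e^{-m|x|^\beta}\int_{\Omega_x}|y|^{-\delta}|x-y|^{-\delta}\,dy$, and the residual polynomial integral diverges (like a power of $|x|$) whenever $\delta\le d$. Your ``near the endpoint $y\approx 0$'' estimate tacitly uses $e^{-m|x-y|^\beta}\le Ce^{-m|x|^\beta}$, which holds for bounded $|y|$ but fails in the intermediate range $C<|y|<\varepsilon|x|$; and the ``bulk'' gain $\varphi\ge 2^{1-\beta}|x|^\beta$ only kicks in once both $|y|$ and $|x-y|$ are comparable to $|x|$. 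What is needed is a \emph{quantitative} strict-subadditivity estimate that leaves some exponential decay in $|y|\wedge|x-y|$ after extracting the factor $e^{-m|x|^\beta}$. The paper's device is the inequality $u^\beta+v^\beta\ge(u+v)^\beta+\eta\log(u\wedge v)$ for $u,v\ge s_0$ (their \eqref{eq:eq20}), with $\eta$ chosen large enough to beat the polynomial weight; an even cleaner alternative in your spirit is $u^\beta+v^\beta\ge(u+v)^\beta+(1-\beta)(u\wedge v)^\beta$, which follows from $(u\vee v)^\beta\ge(u+v)^\beta-\beta(u\wedge v)/(u\vee v)^{1-\beta}$ and produces the extra factor $e^{-m(1-\beta)(|y|\wedge|x-y|)^\beta}$ that makes the near-endpoint integral absolutely convergent for every $\delta\ge 0$. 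Once you state and use such a lemma, your outline closes; without it, the $\beta<1$ step as written has a gap in the intermediate region.
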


\begin{proof}
We first prove that the given restrictions of parameters imply the condition {\bf (F)}. 
Since the case $m=0$ and $\delta>d$ is obvious, we only need to consider the case $m>0$, $\beta \in (0,1)$ and $\delta \geq 0$. To shorten the notation let $f:=f_{\kappa, m, \beta, \delta, c}$.
We start by justifying that for every $\beta \in (0,1)$ and $\eta \geq 0$ there is $s_0 \geq1$ such that 
\begin{align} \label{eq:eq20}
u^{\beta} + v^{\beta} \geq (u+v)^{\beta} + \eta  \log (u \wedge v),  \quad u, v \geq s_0.
\end{align}
This estimate is a consequence of the standard inequality 
$$(u \vee v)^{\beta} \geq (u+v)^{\beta} - \beta (u \wedge v)/(u \vee v)^{1-\beta}, \quad u, v >0,$$ 
and the fact that for any $\eta \geq 0$ we can find $s_0 \geq 1$ such that 
$$
(1-\beta) s^{\beta} \geq \eta \log s, \quad s \geq s_0.
$$
Indeed, with these inequalities, for every $u, v \geq s_0$, we get
\begin{align*}
u^{\beta} + v^{\beta}  = (u \vee v)^{\beta}+(u \wedge v)^{\beta}  & \geq (u+v)^{\beta} - \frac{\beta (u \wedge v)}{(u \vee v)^{1-\beta}}+ (1-\beta) (u \wedge v)^{\beta} + \beta (u \wedge v)^{\beta} \\
& \geq (u+v)^{\beta} + \eta \log (u \wedge v) + \frac{\beta (u \wedge v) }{(u \wedge v)^{1-\beta}}- \frac{\beta (u \wedge v)}{(u \vee v)^{1-\beta}} \\ & \geq (u+v)^{\beta} + \eta \log (u \wedge v), 
\end{align*}
which is exactly \eqref{eq:eq20}.

Let now $\eta := ((d+1-\delta)/m) \vee 0$ and find $s_0 \geq 1$ such that the inequality in \eqref{eq:eq20} holds (recall $\delta  \geq 0)$. When $s_0 >1$ and $|x| \in [2, 2s_0)$, then the inequality in {\bf (F)} holds by integrability and monotonicity properties of function $f$. Therefore, we consider only the case $|x| \geq 2s_0$. With this we have
\begin{align*}
\int_{|y-x|>1, \ |y|>1} & f(|y-x|) f(|y|) dy \leq 2 \left( \int_{1 < |y| \leq s_0} + \int_{s_0<|y| \leq |y-x|} \right) =: 2(I_1 + I_2 ).
\end{align*}
Since there is $c_1>0$ such that $f(s-s_0) \leq c_1 f(s)$ for every $s > 2s_0$, we get
$$
I_1 \leq f(|x|-s_0) \int_{|y|>1} f(|y|) dy \leq c_2 f(|x|).
$$ 
By the inequality \eqref{eq:eq20} with $\eta := ((d+1-\delta)/m) \vee 0$ applied to $u=|y-x|$ and $v=|y|$, we get
\begin{align*}
I_2  = & c_3 \int_{s_0<|y| \leq |y-x|} e^{-m[(|y-x|)^{\beta} + |y|^{\beta}]} (|y-x||y|)^{-\delta} dy \\
& \leq c_4 e^{-m|x|^{\beta}} |x|^{-\delta}\int_{1<|y| \leq |y-x|} |y|^{-\delta-m\eta} dy \leq c_5 e^{-m|x|^{\beta}} |x|^{-\delta} \int_{1<|y|} |y|^{-d-1} dy \leq c_6 f(|x|),
\end{align*}
which completes the proof of the first implication for $\beta \in (0,1)$ and $\delta \geq 0$.

We now consider the most interesting case when $m>0$, $\beta = 1$ and $\delta > (d+1)/2$. We will prove that with this range of parameters {\bf (F)} also holds true. When $d=1$, then this is an easy exercise. We consider only the case $d \geq 2$. Observe that the condition {\bf (F)} is in fact isotropic in the sense that it depends on the norm of $x$ only. Therefore we may and do assume that $x=(x_1,0,...,0)$ with $x_1 > 2$. Let
\begin{align*}   
I:= \int_{|y|>1, \ |y-x|>1} & f(|y-x|)  f(|y|) dy \\ & = \int_{|y|>1,\ y_1<1} + \int_{1 \leq y_1 \leq x_1-1} + \int_{|y-x|>1, \ y_1 > x_1-1} = I_1 + I_2 + I_3. 
\end{align*}
Note that both integrals $I_1$ and $I_3$ are bounded above by $f(|x|-1) \int_{|y|>1} f(|y|)dy \leq c_7 f(|x|)$. Thus, it suffices to estimate the integral $I_2$. Let $y=(y_1,...,y_d)$. By using spherical coordinates for $d-1$ integrals with respect to $dy_2...dy_d$, we get 
\begin{align*}   
I_2 & = c_8 \int_{1 \leq y_1 \leq x_1-1} e^{-m(|y-x|+|y|)} (|x-y||y|)^{-\delta} dy \\ & \leq c_9 |x|^{-\delta} \int_0^{\infty} \int_1^{x_1/2} e^{-m(\sqrt{(x_1-s)^2+r^2}+\sqrt{s^2+r^2})} \frac{r^{d-2}}{(s^2+r^2)^{\delta/2}} ds dr.
\end{align*}
One can directly check that for $(s,r) \in [1,x_1/2] \times [0,\infty)$ we have
\begin{eqnarray} 
\sqrt{(x_1-s)^2+r^2}+\sqrt{s^2+r^2} & = & x_1 + r^2 \left(\frac{1}{\sqrt{(x_1-s)^2 + r^2}+x_1-s} + \frac{1}{\sqrt{s^2 + r^2}+s}\right) \nonumber \\
& \geq & x_1 + \frac{r^2}{\sqrt{s^2 + r^2}+s} \geq x_1 + \frac{r^2}{\sqrt{s^2 + s r^2}+s} \label{eq:eqauxnice}
\end{eqnarray}
and, since $|x|=x_1$, in consequence, 
$$
I \leq c_{10} e^{-m|x|} |x|^{-\delta} \int_1^{x_1/2} \int_0^{\infty} e^{-\frac{m r^2}{\sqrt{s^2 + s r^2}+s}} \frac{r^{d-2}}{(s^2+r^2)^{\delta/2}} dr ds =: c_{11} e^{-m|x|} |x|^{-\delta} \ J(x_1).
$$
It is enough to prove that the function $J(x_1)$ given by the double integral above is bounded for all $x_1>2$. By using the substitution $r =\sqrt{s} u$, for every $x_1 >2$ we get 
$$
J(x_1) = \int_1^{x_1/2} \int_{0}^{\infty} e^{-\frac{m u^2}{\sqrt{1 + u^2}+1}} \frac{s^{\frac{d-1}{2}}u^{d-2}}{s^{\delta}(1+u^2/s)^{\delta/2}} du ds \leq \int_0^{\infty}  e^{-\frac{m u^2}{\sqrt{1 + u^2}+1}} u^{d-2} du \cdot \int_1^{\infty} s^{\frac{d-1}{2}-\delta} ds.
$$
We see that the first integral on the right hand side of the above inequality is always finite, while the second one is convergent whenever $\delta > (d+1)/2$. This completes the proof of the first implication.

To justify the opposite implication, we show that if $m>0$, $\beta>1$, $\delta \geq 0$ or if $m>0$, $\beta=1$ and $\delta \in [0,(d+1)/2]$, then the condition {\bf (F)} does not hold. Let $m>0$ and suppose first that $\beta>1$ and $\delta \geq 0$. For $|x| \geq 3$ we have
\begin{align*}
\int_{|y-x/2|<1} f(|y-x|) f(|y|) dy \geq c_{12} |x|^{-2\delta} e^{-m[(|x|/2+1)^{\beta}+(|x|/2+1)^{\beta}]} = c_{12} |x|^{-2\delta} e^{-2m(|x|/2+1)^{\beta}},
\end{align*}
and, consequently,
$$
\frac{\int_{|y-x|>1, \ |y|>1} f(|y-x|) f(|y|) dy}{f(|x|)} \geq c_{13} |x|^{-\delta} e^{m(|x|^{\beta} -2(|x|/2+1)^{\beta})} \to \infty, \quad \text{as} \quad |x| \to \infty,
$$
which shows that {\bf (F)} cannot hold. 

Suppose now that $\beta =1$ and $\delta \in [0, (d+1)/2]$. We will show that also in this case \textbf{(F)} does not hold. As before, we consider only the case $d \geq 2$. With no loss of generality we assume that $x=(2n,0,...,0)$, for natural $n \geq 2$. By using spherical coordinates for $d-1$ integrals with respect to $dy_2...dy_d$, where $y=(y_1,...,y_d)$, we get 
\begin{align*}   
I:=\int_{|y|>1, \ |y-x|>1} f(|y-x|) & f(|y|) dy \\ & \geq \int_{1<|y_1|<n} e^{-m(|y-x|+|y|)} (|x-y||y|)^{-\delta} dy \\ & \geq c_{14} |x|^{-\delta} \int_0^{\infty} \int_1^{n} e^{-m(\sqrt{(2n-s)^2+r^2}+\sqrt{s^2+r^2})} \frac{r^{d-2}}{(s^2+r^2)^{\delta/2}} ds dr.
\end{align*}
The same argument as in \eqref{eq:eqauxnice} yields that for all $(s,r) \in [1,n] \times [0,\infty)$ we have
\begin{align*}
\sqrt{(2n-s)^2+r^2}+\sqrt{s^2+r^2} & = 2n + r^2 \left(\frac{1}{\sqrt{(2n-s)^2 + r^2}+2n-s} + \frac{1}{\sqrt{s^2 + r^2}+s}\right) \\
& \leq 2n + \frac{2r^2}{\sqrt{s^2 + r^2}+s} \leq 2n + \frac{r^2}{s}
\end{align*}
and, in consequence,
$$
I \geq c_{14} e^{-m|x|} |x|^{-\delta} \int_1^{n} \int_0^{\infty} e^{-\frac{mr^2}{s}} \frac{r^{d-2}}{(s^2+r^2)^{\delta/2}} dr ds.
$$
Denote the last double integral by $I_n$. It is enough to show that $I_n \to \infty$ as $n \to \infty$. By using the substitution $r =\sqrt{s} u$, we get
$$
I_n =  \int_1^{n} \int_{0}^{\infty} e^{-mu^2} \frac{s^{\frac{d-1}{2}}u^{d-2}}{s^{\delta}(1+u^2/s)^{\delta/2}} du ds \geq \int_0^{\infty}  e^{-mu^2} \frac{u^{d-2}}{(1+u^2)^{\delta/2}} du \cdot \int_1^n s^{\frac{d-1}{2}-\delta} ds.
$$
Since $I_n \geq c_{15} \int_1^n s^{\frac{d-1}{2}-\delta} ds$, we see that $I_n \to \infty$ as $n \to \infty$ whenever $\delta \leq (d+1)/2$, which completes the proof of the proposition.
\end{proof}

The next corollary shows that for symmetric L\'evy processes with jump intensities comparable to radially nonincreasing functions $f_{\kappa, m, \beta, \delta, c}$ the restriction of parameters given by Proposition \ref{prop:convver} in fact characterizes two-sided bounds as in Theorem \ref{th:main3}. Note that for the class of convolution semigroups considered in Corollary \ref{cor:ex2} the regularity condition \textbf{(E)} depends only on the type of singularity of the function $\kappa$ at zero. 
 
\begin{wniosek} \label{cor:ex2} 
Assume that \eqref{eq:rangembetac}--\eqref{eq:rangedelta} hold. Let $\nu(dy)=g(y)dy$ be a L\'evy measure such that $\nu(\Rdwithouto)=\infty$ and $g(x) = g(-x) \asymp f_{\kappa, m, \beta, \delta, c} (|x|)$, $x \in \Rdwithouto$, and let $b \in \R^d$. Assume, moreover, that the assumption \textbf{(E)} is satisfied with some $t_p>0$. Then
\begin{align} \label{eq:ex2}
p_t(x+tb) \asymp t \ f_{\kappa, m, \beta, \delta, c} (|x|), \quad |x| \geq R, \quad t \in (0,t_0],
\end{align}
for some $R>0$ and $t_0 \leq t_p$ if and only if the one of the conditions (a), (b) or (c) in Proposition \ref{prop:convver} holds.  
\end{wniosek}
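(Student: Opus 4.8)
The plan is to deduce the corollary by combining Theorem \ref{th:main3} with Proposition \ref{prop:convver}. Under \eqref{eq:rangembetac}--\eqref{eq:rangedelta} the function $f := f_{\kappa, m, \beta, \delta, c}$ is strictly positive and nonincreasing on $(0,\infty)$, so with the standing hypotheses all assumptions of Theorem \ref{th:main3} are met (with this $f$ and $\gamma = d$): $\nu(dy) = g(y)dy$ with $\nu(\Rdwithouto) = \infty$, $g(-y) = g(y)$, $g(x) \asymp f(|x|)$, $b \in \R^d$, and \textbf{(E)} holds by assumption. Theorem \ref{th:main3} then says that \eqref{eq:ex2} holds for some $R > 0$ and some $t_0 \leq t_p$ if and only if condition \textsc{(3.1)} holds, i.e. there are $r_0 > 0$ and $L_1 > 0$ with $g_{r_0} * g_{r_0}(x) \leq L_1 g(x)$ for all $|x| \geq 2r_0$. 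Hence it suffices to prove that \textsc{(3.1)} holds for some $r_0 > 0$ exactly when one of (a), (b), (c) of Proposition \ref{prop:convver} holds.

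For the implication ``(a), (b) or (c) $\Rightarrow$ \textsc{(3.1)}'' I would take $r_0 = 1$. Since $g(z) \asymp f(|z|)$ uniformly in $z \neq 0$,
$$
g_1 * g_1(x) = \int_{|y| > 1,\ |x-y| > 1} g(y)\, g(x-y)\, dy \asymp \int_{|y| > 1,\ |x-y| > 1} f(|y|)\, f(|x-y|)\, dy ,
$$
so the estimate required in \textsc{(3.1)} for $r_0 = 1$ coincides, up to a fixed multiplicative constant, with the inequality in \textbf{(F)}; by Proposition \ref{prop:convver} the latter holds whenever (a), (b) or (c) is in force. Feeding this back into Theorem \ref{th:main3} yields \eqref{eq:ex2} with $R = 4$ and $t_0 = t_p \wedge 1/\Psi(1)$.

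For the converse, note first that since \eqref{eq:rangedelta} forces $\delta > d$ whenever $m = 0$, the negation of ``(a) or (b) or (c)'' can occur only when $m > 0$ and either $\beta > 1$, or $\beta = 1$ and $\delta \leq (d+1)/2$. In each of these cases I would show that \textsc{(3.1)} fails for \emph{every} $r_0 > 0$, by bounding $g_{r_0} * g_{r_0}(x) \asymp \int_{|y| > r_0,\ |x-y| > r_0} f(|y|) f(|x-y|)\, dy$ from below exactly as in the ``negative'' part of the proof of Proposition \ref{prop:convver}, with the normalizing constant $1$ there replaced throughout by $r_0$. When $\beta > 1$ one restricts the integral to $|y - x/2| < r_0$, where the integrand is $\geq f(|x|/2 + r_0)^2$, and observes that $|x|^{\beta} - 2(|x|/2 + r_0)^{\beta} = |x|^\beta(1 - 2^{1-\beta}) + o(|x|^\beta) \to +\infty$, so the ratio to $f(|x|)$ blows up. When $\beta = 1$ and $\delta \leq (d+1)/2$ one repeats the spherical-coordinate computation; after the substitution $r = \sqrt{s}\, u$ the surviving factor is comparable to $\int_{r_0}^{|x|/2} s^{(d-1)/2 - \delta}\, ds$, which diverges as $|x| \to \infty$ precisely because $(d-1)/2 - \delta \geq -1$. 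In both cases $\sup_{|x| \geq 2r_0} g_{r_0} * g_{r_0}(x)/g(x) = \infty$, so \textsc{(3.1)} is false; by the equivalence of the first paragraph, \eqref{eq:ex2} cannot hold for any $R$ and $t_0$.

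The one point that genuinely needs care is the last paragraph: one must verify that the lower bounds used to refute \textbf{(F)} in Proposition \ref{prop:convver} are insensitive to the cut-off radius, so that the failure of \textbf{(F)} (stated with the normalization $1$) really upgrades to the failure of \textsc{(3.1)} for all $r_0$ — and, separately, to dispose of the low-dimensional case $d=1$ as in Proposition \ref{prop:convver}. The computations involved are minor variants of those already carried out there, with only the lower limits of integration changed, so no new idea is required, only attention to the integration domains and the exponents $m,\beta,\delta$.
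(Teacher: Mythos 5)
Your proposal is correct and follows the same route as the paper: combine Theorem \ref{th:main3} with Proposition \ref{prop:convver} via the observation that under $g \asymp f_{\kappa,m,\beta,\delta,c}$ the condition \textsc{(3.1)} is equivalent to \textbf{(F)}. You are in fact more careful than the paper's one-line proof in checking that the lower bounds refuting \textbf{(F)} are stable under changing the cut-off radius $r_0$, which is exactly the point needed to conclude that \textsc{(3.1)} fails for \emph{every} $r_0$ when none of (a)--(c) holds.
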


\begin{proof}
The result is a direct consequence of Theorem \ref{th:main3} and Proposition \ref{prop:convver}. Indeed, under the assumption of the theorem, the condition \textsc{(3.1)} in Theorem \ref{th:main3} is equivalent to \textbf{(F)}. 
\end{proof}
\noindent
The above result applies to a large class of pure jump symmetric L\'evy processes including the wide range of subordinate Brownian motions and more general unimodal L\'evy processes. It not only gives a sharp bound for the decay of the corresponding transition density at infinity for small time, but also settles when exactly this bound holds true. The most interesting examples are tempered L\'evy processes with jump intensities exponentially and suboexponentially localized at infinity. We cover a big subclass of tempered stable processes and others, even with more general intensities of small jumps that are remarkably different than the stable one, whenever the condition \textbf{(E)} is satisfied. 

Let us now briefly test our Theorem \ref{th:main3} with some exact examples of L\'evy processes with L\'evy measures absolutely continuous with respect to Lebesgue measure. 

\begin{example} \label{ex:ex2} {\rm
\noindent
\begin{itemize}
\item[(1)] One can see that in the case of the \emph{relativistic $\alpha$-stable process with parameter $\vartheta>0$} ($\alpha \in (0,2)$, $\kappa(r)=r^{-d-\alpha}$, $m=\vartheta^{1/\alpha}$, $\beta=1$, $\delta=(d+\alpha+1)/2$) \cite{ChKimSon} and the \emph{subexponentially and exponentially tempered $\alpha$-stable process} ($\alpha \in (0,2)$, $\kappa(r)=r^{-d-\alpha}$, $m>0$, $\beta \in (0,1]$, $\delta=d+\alpha$) \cite{Ros07} the condition\textsc{(3.1)} holds, and the corresponding densities satisfy two-sided small time sharp bounds as in \textsc{(3.2)}. 

\item[(2)] It is useful to see some other examples of $\kappa$ different from $r^{-d-\alpha}$ for which the background regularity assumption \textbf{(E)} in Theorem \ref{th:main3} (Corollary \ref{cor:ex2}) is satisfied. One can check that it still holds true when $\kappa$ is as in (2) and (3) of Example \ref{ex:ex0}. In particular, this shows that \textbf{(E)} covers a larger class of semigroups than \textsc{(2.1)}. On the other hand, as we mentioned in Introduction, \textbf{(E)} does not hold when the characteristic exponent $\Phi$ slowly varies at infinity. For instance, it fails for (1) in Example \ref{ex:ex0}.

\item[(3)] When $m>0$, $\beta=1$ and $\delta=0$ (e.g. \emph{Lamperti stable process} \cite{CPP10}), then the convolution condition \textsc{(3.1)} (or, simply, \textbf{(F)}) does not hold and Theorem \ref{th:main3} (Corollary \ref{cor:ex2}) states that the optimal bounds for large $x$ of the transition densities have to be of different form (cf. Example \ref{ex:ex3}).
\end{itemize}
}
\end{example}    

\bigskip

\subsection{More general L\'evy measures}

We now illustrate our Theorem \ref{th:main2} by discussing examples of (non-necessarily symmetric) L\'evy processes with more general L\'evy measures that are not absolutely continuous with respect to Lebesgue measure. 
 
\begin{example} \label{ex:ex1} \rm{\textbf{(Product L\'evy measures)} 
Let $m>0$, $\beta \in (0,1]$ and $\delta \geq 0$ and let
$$
\kappa(r) := r^{-1-\alpha_1} \left[\log\left(1+\frac{1}{r}\right) \right]^{-\alpha_2} \quad \text{for} \quad r \in (0,1],
$$
where the parameters $\alpha_1$ and $\alpha_2$ satisfy
\begin{itemize}
\item[(i)] $\alpha_1 \in (0,2)$, $\alpha_2 \in \left\{-2, 0 , 2\right\}$, \\
or 
\item[(ii)] $\alpha_1 = \alpha_2 = 2$.
\end{itemize}
Note that for such choice of $\alpha_2$ the function $\kappa$ is decreasing on the whole interval $(0,1)$.

\noindent
Let $\nu$ be a L\'evy measure such that 
\begin{align*} 
\nu(A) \asymp \int_{\sfera} \mu(d \theta) \left(\int_0^1 \indyk{A}(s \theta) \kappa(s) ds + c \int_1^{\infty} \indyk{A}(s \theta) s^{-\delta} e^{-ms^{\beta}} ds \right),
\end{align*}
where $c=\kappa(1)e^m$ and $\mu$ is a nondegenerate measure on $\sfera$ (sometimes called spectral measure) such that 
$$
\mu(\sfera \cap B(\theta,\rho)) \leq c_1 \rho^{\gamma-1}, \quad \theta \in \sfera, \ \rho>0,
$$
for some $\gamma \in [1,d]$. Let moreover $b \in \R^d$. 

Estimates of transition densities of convolution semigroups corresponding to such L\'evy measures have been studied in \cite{S10, S11, KSz1}, but the optimal small time bounds for large $x$ are still an open problem. Available results allow to get the upper bound for large $x$ with rate $|x|^{1-\gamma-\delta} e^{-m|c_2x|^{\beta}}$, for some constant $c_2 \in (0,1)$, and they cannot answer the question whether the correct rate is given by $|x|^{1-\gamma-\delta} e^{-m |x|^{\beta}}$. In particular, \cite[Theorem 1]{KSz1} yields
\begin{align*} 
p_t(x+tb_{h(t)}) \leq c_3 \, t \ [h(t)]^{\gamma-d} \, |x|^{1-\gamma-\delta} e^{-m\left|\frac{x}{4}\right|^{\beta}}, \quad t \in (0,1], \quad |x| \geq R >0,
\end{align*}
with
\begin{align} \label{eq:finalht_2}
h(t) \asymp \left\{
  \begin{array}{lcl}
    t^\frac{1}{\alpha_1} \left[\log\left(1+\frac{1}{t}\right)\right]^{\frac{-\alpha_2}{\alpha_1}} & \mbox{ for }  & \alpha_1 \in (0,2), \, \alpha_2 \in \left\{-2, 0 , 2\right\} \\
    t^\frac{1}{2} \left[\log\left(1+\frac{1}{t}\right)\right]^{\frac{-1}{2}} & \mbox{ for }  & \alpha_1 = \alpha_2 = 2 \\
  \end{array}
  \right.  ,  \quad t \in (0,1].
\end{align}
On the other hand, one can derive from \cite[Theorem 2]{KSz1} that if, furthermore, $\mu$ is symmetric and for some finite set $D_0=\{\theta_1,\theta_2,...,\theta_n\}\subset\sfera$, $n \in \N$, and the positive constants $c_4, \rho_0$ we have
$$
  \mu(\sfera \cap B(\theta,\rho)) \geq c_4 \rho^{\gamma-1},\quad \theta \in D_0, \, \rho \in (0,\rho_0],
$$
then there is $R \geq 1$ such that
 $$
          p_t(x+tb_{h(t)}) = p_t(x+tb) 
          \geq  c_5 \, t \ [h(t)]^{\gamma-d} \, |x|^{1-\gamma-\delta} e^{-m\left|x\right|^{\beta}}, \quad t \in (0,1], \, x \in D,
 $$
where $D=\{x\in\Rd:\: x=r\theta,\,r \geq R,\theta\in D_0\}$. Therefore, it is reasonable to ask what is the sharpest possible upper bound for the decay rate at infinity (cf. Example \ref{ex:ex3} below).

Our present Theorem \ref{th:main2} gives an answer to this problem. Indeed, whenever $\beta \in (0,1)$ and $\delta \geq 0$, or $\beta=1$ and, at least, $\delta > 1$, then it states that
$$
p_t(x+tb_{h(t)}) \leq c_6 \, t \ [h(t)]^{\gamma-d} \, |x|^{1-\gamma-\delta} e^{-m\left|x\right|^{\beta}}, \quad t \in (0,1], \quad |x| \geq 4.
$$

We now verify all assumptions of Theorem \ref{th:main2}. Denote $q(r):= \kappa(r) \indyk{0 < r \leq 1} + cr^{-\delta} e^{-mr^{\beta}}\indyk{r>1}$, and consider first the integral $\int_0^r s^2 q(s) ds$, $r>0$. For $r \in (0,1]$ we have
\begin{align*}
g(r)& := \int_0^r s^2 \kappa(s) ds  \asymp \left\{
  \begin{array}{lcl}
    r^{2-\alpha_1} \left[\log\left(1+\frac{1}{r}\right)\right]^{-\alpha_2} & \mbox{ for }  & \alpha_1 \in (0,2), \, \alpha_2 \in \left\{-2, 0 , 2\right\}, \\
    \left[\log\left(1+\frac{1}{r}\right)\right]^{-1} & \mbox{ for }  & \alpha_1 = \alpha_2 = 2, \\
  \end{array}
  \right.
\end{align*}
while for $r>1$ we get $g(r) = g(1) + \int_1^r s^{2-\delta} e^{-ms^{\beta}} ds \asymp c_7$. Thus, by \cite[Corollaries 2 and 3]{KSz1} 
\begin{align}\label{eq:finalrephi_2}
\Re(\Phi(\xi)) \asymp |\xi|^2 g(1/|\xi|), \quad \xi \in \R^d \backslash \left\{0\right\},
\end{align}
and, consequently, by checking that \eqref{eq:wlsc} holds, we can verify that the assumption \textbf{(E)} is satisfied for all $t \in (0,1]$. Moreover, by direct calculation using \eqref{eq:finalrephi_2} and the asymptotic formulas above, one can derive the asymptotics for $h(t)$, $t \in (0,1]$, as in \eqref{eq:finalht_2}. 

Also, it can be verified that for every $A \in \Borel$ with $\delta_A:=\dist(A,0)>0$ we have
$$
\nu(A) \leq c_8 \delta_A^{1-\gamma} \left(\kappa(\delta_A) \indyk{0<\delta_A \leq 1} + c \delta_A^{-\delta} e^{-m \delta_A^{\beta}} \indyk{\delta_A>1} \right) [\diam(A)]^{\gamma},
$$
which means that the assumption \textbf{(D)} holds with the function 
$$f(r):= \kappa(r) r^{1-\gamma}\indyk{0<r \leq 1} + c r^{1-\gamma-\delta} e^{-m r^{\beta}} \indyk{r>1}$$ and given $\gamma$.

By Lemma \ref{lm:verassCprod} below also the convolution condition in \textbf{(C)} for $r_0=1$ and such $f$ is satisfied when $\beta \in (0,1)$ and $\delta \geq 0$, or $\beta=1$ and $\delta >1$. The second part of \textbf{(C)} is an easy consequence of \eqref{eq:finalrephi_2}. Indeed, for $r \in (0,1]$ we have
$$
f(r) r^{\gamma} = r \kappa(r) \asymp r^{-\alpha_1} \left[\log\left(1+\frac{1}{r}\right) \right]^{-\alpha_2} = r^{-2} r^{2-\alpha_1} \left[\log\left(1+\frac{1}{r}\right) \right]^{-\alpha_2} \asymp \Psi(1/r)
$$
for $\alpha_1 \in (0,2)$ and
\begin{align*} 
f(r) r^{\gamma} = r \kappa(r) \asymp r^{-2} \left[\log\left(1+\frac{1}{r}\right) \right]^{-2} < r^{-2} \left[\log\left(1+\frac{1}{r}\right) \right]^{-1} \asymp \Psi(1/r)
\end{align*}
for $\alpha_1 = 2$. This completes the verification of assumptions of Theorem \ref{th:main2}. 
}
\end{example}

We now prove the auxiliary lemma which was needed in the previous example. 

\begin{lemat} \label{lm:verassCprod}
Let $m>0$, $\beta \in (0,1]$ and $\delta \geq 0$ and let $\nu$ be a L\'evy measure such that for every $A \in \Borel$ with $\dist(A,0)>1$
\begin{align*} 
\nu(A) \asymp \int_{\sfera} \int_1^{\infty} \indyk{A}(s \theta) s^{-\delta} e^{-ms^{\beta}} ds \mu(d \theta), 
\end{align*}
where $\mu$ is a measure on $\sfera$ such that there is $\gamma \in [1,d]$ for which 
$$
\mu(\sfera \cap B(\theta,\rho)) \leq c \rho^{\gamma-1}, \quad \theta \in \sfera, \ \rho>0,
$$
with some constant $c>0$. Let $f(r)= r^{1-\gamma-\delta} e^{-mr^{\beta}}$ for $r >1$. When $\beta \in (0,1)$ and $\delta \geq 0$ or $\beta =1$ and, at least, $\delta > 1$, then there is a constant $L_1>0$ such that we have
$$
\int_{|y-x|>1, \ |y|>r} f(|y-x|) \nu(dy) \leq L_1 f(|x|) \Psi(1/r), \quad r \in (0,1], \quad |x|>2.
$$

\end{lemat}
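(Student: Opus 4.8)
The plan is to split the domain $\{|y-x|>1,\ |y|>r\}$ into the near part $\{r<|y|\le 1\}$ and the far part $\{|y|>1\}$ (on which $|y|>r$ is automatic since $r\le 1$), and to write $R=|x|>2$. On the near part, $|y|\le 1$ forces $|y-x|\ge R-1>1$, so by monotonicity of $f$ on $(1,\infty)$ and the elementary bound
\[
\frac{f(R-1)}{f(R)}=\Big(\tfrac{R-1}{R}\Big)^{1-\gamma-\delta}e^{m(R^\beta-(R-1)^\beta)}\le 2^{\gamma+\delta-1}e^{m\beta}=:C_f,\qquad R\ge 2,
\]
which uses $\beta\le 1$ (so $R^\beta-(R-1)^\beta\le\beta(R-1)^{\beta-1}\le\beta$), together with $\nu(\{|y|>r\})=|\nubounded{r}|\le L_5\Psi(1/r)$ from \eqref{eq:Lm<Psi}, we get $\int_{r<|y|\le 1}f(|y-x|)\,\nu(dy)\le C_fL_5\,f(|x|)\,\Psi(1/r)$. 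On the far part, since $\nu$ is comparable on sets at positive distance from $0$ to the model measure $\nu_0(dy):=\int_{\sfera}\int_1^\infty\indyk{dy}(s\theta)\,s^{-\delta}e^{-ms^\beta}\,ds\,\mu(d\theta)$, a routine exhaustion of $\{|y|>1\}$ by annuli reduces matters to showing $J:=\int_{\{|y|>1,\ |y-x|>1\}}f(|y-x|)\,\nu_0(dy)\le c\,f(|x|)$; since $\Psi(1/r)\ge\Psi(1)>0$ for $r\le 1$, this far contribution is then also $\le (c/\Psi(1))f(|x|)\Psi(1/r)$, and adding the two pieces proves the lemma.

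To estimate $J$ I would first dispose of the exponentials: by subadditivity of $t\mapsto t^\beta$ ($\beta\le 1$) and $|x|\le |y-x|+|y|$ one has $|y-x|^\beta+|y|^\beta\ge R^\beta$, and for $\beta\in(0,1)$ a short convexity computation sharpens this to $|y-x|^\beta+|y|^\beta\ge R^\beta+c_\beta\min\{|y-x|,|y|\}^\beta$ for some $c_\beta>0$. Writing $f(|y-x|)=|y-x|^{1-\gamma-\delta}e^{-m|y-x|^\beta}$, using that $\nu_0$ carries $e^{-m|y|^\beta}$, and pulling out $e^{-m|x|^\beta}$, the problem becomes
\[
\int_{\sfera}\int_1^\infty\indyk{\{|s\theta-x|>1\}}\,|s\theta-x|^{1-\gamma-\delta}\,s^{-\delta}\,e^{-m\,\eta_\beta(s,|s\theta-x|)}\,ds\,\mu(d\theta)\le c\,R^{1-\gamma-\delta},
\]
where $\eta_\beta\equiv 0$ if $\beta=1$ and $\eta_\beta(s,w)=c_\beta\min\{s,w\}^\beta$ if $\beta<1$ (note $1-\gamma-\delta\le 0$). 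Then I would split the inner integral into region (a) $\{|s\theta-x|>R/10\}$ and region (b) $\{1<|s\theta-x|\le R/10\}$. Region (a) is easy: bound $|s\theta-x|^{1-\gamma-\delta}\le(R/10)^{1-\gamma-\delta}$, and the remaining $\int_{\sfera}\int_1^\infty s^{-\delta}e^{-m\eta_\beta}\,ds\,\mu(d\theta)$ is a finite constant — for $\beta<1$ because on $\{s\le|s\theta-x|\}$ one has $e^{-mc_\beta s^\beta}$, so $\int_1^\infty s^{-\delta}e^{-mc_\beta s^\beta}ds<\infty$ for all $\delta\ge 0$, while on $\{|s\theta-x|<s\}$ we gain $e^{-mc_\beta(R/10)^\beta}$; for $\beta=1$ simply because $\int_1^\infty s^{-\delta}ds<\infty$ precisely when $\delta>1$.

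Region (b) carries the real content. There $s=|y|\ge 9R/10$, so $s^{-\delta}\le(9R/10)^{-\delta}$; parametrising for fixed $\theta$ by $v=s-\langle\theta,x\rangle$ and $h_\theta:=R\sqrt{1-\langle\theta,x/R\rangle^2}$, so $|s\theta-x|=\sqrt{v^2+h_\theta^2}$, the inner $s$-integral is at most $\phi(h_\theta):=\int_{\R}\indyk{\{v^2+h_\theta^2>1\}}(v^2+h_\theta^2)^{(1-\gamma-\delta)/2}e^{-m\eta_\beta(\cdot,\sqrt{v^2+h_\theta^2})}\,dv$, and standard one-variable estimates give $\phi(h)\le c$ for all $h$ and $\phi(h)\le c\,h^{1-\gamma-\delta}e^{-c'h^\beta}$ for $h\ge 1$ (for $\beta=1$ the exponential is absent and convergence of the $v$-integral at infinity uses $\gamma+\delta>2$, which holds since $\delta>1$; for $h<1$ the cut-off $v^2+h^2>1$ removes the singularity at $v=0$). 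It then remains to integrate $\phi(h_\theta)\indyk{\{h_\theta\le R/10\}}$ against $\mu$. The geometric input is that $h_\theta\le a$ confines $\theta$ to an angular neighbourhood of radius $\asymp a/R$ of $\pm x/R$, so $\mu(\sfera\cap B(\theta,\rho))\le c\rho^{\gamma-1}$ gives $\mu(\{h_\theta\le a\})\le c(a/R)^{\gamma-1}$ for $a\lesssim R$. A dyadic decomposition $h_\theta\in[2^j,2^{j+1}]$, $j=0,\dots,\lfloor\log_2(R/10)\rfloor$, plus the band $\{h_\theta\le 1\}$ (of $\mu$-mass $\le cR^{1-\gamma}$), yields
\[
\int_{\sfera}\phi(h_\theta)\,\indyk{\{h_\theta\le R/10\}}\,\mu(d\theta)\le c\,R^{1-\gamma}\sum_{j\ge 0}2^{-j\delta}\,\psi_\beta(2^j),
\]
with $\psi_\beta(2^j)=e^{-c'2^{j\beta}}$ for $\beta<1$ and $\psi_\beta\equiv 1$ for $\beta=1$. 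This series converges for every $\delta\ge 0$ when $\beta<1$ and converges (as $\sum 2^{-j\delta}$) exactly when $\delta>1$ when $\beta=1$; multiplying by $(9R/10)^{-\delta}$ gives the region-(b) bound $c\,R^{-\delta}\cdot R^{1-\gamma}=c\,R^{1-\gamma-\delta}$, completing the proof of $J\le cf(|x|)$ and hence of the lemma.

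I expect the hard part to be precisely this bookkeeping in region (b): the weight $|s\theta-x|^{1-\gamma-\delta}$ is singular exactly where $s\theta$ is close to $x$, and any crude estimate there loses a full power of $R$; one recovers the sharp rate $R^{1-\gamma-\delta}$ only by balancing the lower-dimensionality of $\mu$ against this singular weight together with the residual exponential decay. This balancing is also what produces the dichotomy in the hypotheses — the gain $c_\beta\min\{\cdot\}^\beta$ exists only for $\beta<1$ and then permits all $\delta\ge 0$, whereas for $\beta=1$ the series $\sum 2^{-j\delta}$ must converge on its own, forcing $\delta>1$.
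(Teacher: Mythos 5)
Your overall strategy coincides with the paper's: the same near/far split with $|\nubounded{r}|\le L_5\Psi(1/r)$ for the near part, the same reduction of the far part to the model (spectral) integral, the same extraction of $e^{-m|x|^\beta}$ via a subadditivity-type inequality for $\beta\in(0,1)$, and the same source of the restriction $\delta>1$ when $\beta=1$. What you add is genuinely useful: where the paper simply asserts that the residual double integral over $\sfera\times(s_0,\infty)$ is $O(|x|^{1-\gamma-\delta})$, you actually carry out the computation, introducing the perpendicular-distance coordinate $h_\theta$, a one-dimensional profile $\phi(h_\theta)$, and a dyadic sum matched against the angular bound on $\mu$; you also replace the logarithmic gain in \eqref{eq:eq20} by the cleaner, unconditional $u^\beta+v^\beta\ge(u+v)^\beta+(1-\beta)(u\wedge v)^\beta$ for $u\le v$, which dispenses with the threshold $s_0$ bookkeeping of the paper.

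Two bookkeeping slips, neither fatal. First, the scaling $v=hu$ gives $\phi(h)\lesssim h^{2-\gamma-\delta}\,\psi_\beta(h)$ for $h\ge1$ (an extra factor $h$ from $dv=h\,du$), not $h^{1-\gamma-\delta}\psi_\beta(h)$; consequently the dyadic series should read $\sum_{j\ge0}2^{j(1-\delta)}\psi_\beta(2^j)$ rather than $\sum_{j\ge0}2^{-j\delta}\psi_\beta(2^j)$, and it is the former, not the latter, whose convergence for $\beta=1$ is equivalent to $\delta>1$ (the series $\sum 2^{-j\delta}$ already converges for all $\delta>0$). Your two slips offset, so the final criterion is right, but the displayed series is off by one power of $2^j$. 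Second, in region (a) for $\beta<1$ the remark that on $\{|s\theta-x|<s\}$ one "gains $e^{-mc_\beta(R/10)^\beta}$" does not by itself control $\int_{R/10}^\infty s^{-\delta}\,ds$ when $\delta\le1$. The fix is short: for $s\ge 2R$ the triangle inequality gives $|s\theta-x|\ge s-R\ge s/2$, hence $\min\{s,|s\theta-x|\}\ge s/2$ and integrable exponential decay in $s$ is restored; the bounded range $R/10<s<2R$ contributes at most $CR\,e^{-mc_\beta(R/10)^\beta}$, which is uniformly bounded. With these two corrections your argument closes and recovers the paper's conclusion with essentially the paper's method, merely made explicit.
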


\begin{proof}
By \eqref{eq:Lm<Psi}, for $|x| \geq 2$ and $r \in (0,1]$, we have 
\begin{align*}
\int_{|y-x|>1, \ |y|>r} f(|y-x|) \nu(dy) & \leq c f(|x|-1) \Psi(1/r) + \int_{|y-x|>1, \ |y|>1} f(|y-x|) \nu(dy) \\ & \leq c_1 f(|x|) \Psi(1/r) + I,
\end{align*}
where
$$
I= \int_{\sfera} \int_1^{\infty} \indyk{B(x,1)^c}(s \theta)f(|s \theta - x|) f(|s\theta|) s^{\gamma-1} ds\mu(d \theta).
$$
Let first $\beta \in (0,1)$ and $\delta \geq 0$. Denote $\eta:= 2/m$ and find $s_0\geq1$ for which the inequality \eqref{eq:eq20} holds with such $\eta$. Note that for $|x| \in [2,2s_0]$ the desired inequality easily follows from properties of the function $f$ and it is enough to consider $|x|>2s_0$. Let
\begin{align*}
I & = \int_{\sfera} \left(\int_1^{s_0} + \int_{s_0}^{\infty}\indyk{B(x,s_0)\cap B(x,1)^c}(s \theta) + \int_{s_0}^{\infty}\indyk{B(x,s_0)^c}(s \theta)\right)f(|s \theta - x|) f(|s\theta|) s^{\gamma-1} ds\mu(d \theta) \\ &=:I_1+I_2+I_3.
\end{align*}
By the fact that $f(|x|-s_0) \leq c_2f(|x|)$ for $|x|>2s_0$, we get
$$
I_1 \leq c_2 f(|x|-s_0) \mu(\sfera) \int_1^{s_0} f(s) s^{\gamma-1} ds \leq c_3 f(|x|) \Psi(1/r), \quad r \in (0,1].
$$
Since $\int_{\sfera} \int_1^{\infty}\indyk{B(x,s_0)\cap B(x,1)^c}(s \theta) s^{\gamma-1} ds\mu(d \theta) \leq c_4 < \infty$ for every $|x| > 2s_0$, we also get
$$
I_2 \leq f(1) f(|x|-s_0) \int_{\sfera} \int_1^{\infty}\indyk{B(x,s_0)\cap B(x,1)^c}(s \theta) s^{\gamma-1} ds\mu(d \theta) \leq c_5 f(|x|) \Psi(1/r), \quad r \in (0,1].
$$
Thus, it is enough to estimate $I_3$. To this end, we use the inequality \eqref{eq:eq20} for $u=|x-s \theta|$ and $v=|s \theta|$. Similarly as in the first part of the proof of Proposition \ref{prop:convver}, we get
$$
I_3 \leq c_6 e^{-m|x|^{\beta}} \int_{\sfera} \int_{s_0}^{\infty} \indyk{B(x,s_0)^c}(s \theta) (|s \theta - x| \wedge |s \theta|)^{-2}(|s \theta - x||s \theta|)^{1-\gamma-\delta} s^{\gamma-1}  ds\mu(d \theta)
$$
and one can directly show that the last double integral is bounded by $c_7 |x|^{1-\gamma-\delta}$ for all $|x| \geq 2s_0$. Therefore, finally we obtain that $I_3 \leq c_8 f(|x|) \Psi(1/r)$ for all $r \in (0,1]$, which completes the proof of the lemma for $\beta \in (0,1)$. 

Let now $\beta=1$. In this case, we directly have  
$$
I \leq c_9 e^{-m|x|} \int_{\sfera} \int_{1}^{\infty} \indyk{B(x,1)^c}(s \theta) (|s \theta - x||s \theta|)^{1-\gamma-\delta} s^{\gamma-1}  ds\mu(d \theta)
$$
and when, at least, $\delta>1$, the last double integral again is bounded by $c_{10} |x|^{1-\gamma-\delta}$ for all $|x| \geq 2$. Therefore, again, the desired bound holds and the proof of the lemma is complete.
\end{proof} 

The next example is devoted to purely discrete L\'evy measures.

\begin{example} \label{ex:ex11} \rm{\textbf{(Discrete L\'evy measures)}
Let $\left\{v_n: n=1,..,k_0 \right\}$ be a family of $k_0 \in \N$, $k_0 \geq d$, vectors in $\R^d$ such that $\lin \left\{v_n: n=1,..,k_0 \right\} = \R^d$ and let $b \in \R^d$. For $q>0$ denote 
$$
A_q=\left\{x \in \R^d: x=2^{q n} v_k, \ \text{where} \ n \in \Z, \ k = 1, ..., k_0 \right\}
$$
and
$$
f(s):= \indyk{[0,1]}(s) \cdot s^{-\alpha/q}  +  e^m \indyk{(1,\infty)}(s) \cdot e^{-m s^{\beta}} s^{-\delta} , \quad s > 0, 
$$
with $m>0$, $\beta \in (0,1]$, $\delta >0$ and $\alpha \in (0,2q)$. Let 
$$
\nu(dy):= \int_{\R^d} f(|y|) \delta_{A_q}(dy) = \sum_{y \in A_q} f(|y|).  
$$
Clearly, $\nu$ is a not necessarily symmetric purely atomic L\'evy measure. By \cite[Proposition 1]{KSz1} we can easily check that in this case $\Re \Phi(\xi) \asymp |\xi|^2 \wedge |\xi|^{\alpha/q}$, $\xi \in \R^d \backslash \left\{0\right\}$, and $h(t) \asymp t^{q/\alpha}$, $t \in (0,1]$. Thus the assumption \textbf{(E)} is satisfied for all $t \in (0,1]$. Moreover, one can see that also the domination property \textbf{(D)} holds exactly with the function $f$ and $\gamma=0$. 

Some upper bound of transition densities in this case have been recently obtained: it follows from \cite[Theorem 1]{KSz1} that
$$
p_t(x+tb_{h(t)}) \leq c t^{-\frac{dq}{\alpha}} \left(1 \wedge t f(|x|/4)\right), \quad x \in \R^d, \quad t \in (0,1],
$$
On the other hand, under the additional assumption that $\nu$ is symmetric (i.e., for every $v \in \left\{v_n: n=1,..,k_0 \right\}$ we have $-v \in \left\{v_n: n=1,..,k_0 \right\}$), by \cite[Theorem 2]{KSz1} we also obtain
$$
p_t(x+tb_{h(t)}) = p_t(x+tb) \geq c_1 t^{-\frac{dq}{\alpha}} \left(1 \wedge t f(|x|)\right), \quad x \in A_q, \quad t \in (0,1].
$$
As in the previous example, arguments in \cite{KSz1} and other available results allow to get the upper bound for large $x$ with the rate $|x|^{-\delta} e^{-m|c_2x|^{\beta}}$, for some constant $c_2 \in (0,1)$, but not exactly $|x|^{-\delta} e^{-m |x|^{\beta}}$. Our present Theorem \ref{th:main2} says that in this case the optimal rate for large $x$ is indeed given by $|x|^{-\delta} e^{-m |x|^{\beta}}$ for all $m>0$, $\beta \in (0,1]$, $\delta >0$ and $\alpha \in (0,2q)$. In particular, we have 
$$
p_t(x+tb_{h(t)}) \leq c_3 t^{-\frac{dq}{\alpha}} \left(1 \wedge t f(|x|)\right), \quad x \in \R^d, \quad t \in (0,1].
$$
To justify this, it is enough to check the convolution condition in \textbf{(C)}. As before, it suffices to see that for some constant $c_4>0$
$$
\sum_{y \in A_q \cap B(0,1)^c \cap B(x,1)^c} f(|x-y|) f(|y|) \leq c_4 f(|x|), \quad |x| \geq 2.
$$
The left hand side is not larger than
\begin{align*}
e^{-m |x|^{\beta}} & \left(\sum_{y \in A_q \cap \left\{y:1 <|y| \leq |y-x| \right\}} (|x-y| |y|)^{-\delta} +  \sum_{y \in A_q \cap \left\{y:1 <|y-x| < |y| \right\}} (|x-y| |y|)^{-\delta}\right) \\
& \leq c_5 e^{-m |x|^{\beta}}|x|^{-\delta} \left(\sum_{y \in A_q \cap \left\{y:1 <|y| \leq |y-x| \right\}} |y|^{-\delta} +  \sum_{y \in A_q \cap \left\{y:1 <|y-x| < |y| \right\}} |x-y|^{-\delta}\right) \\ & \leq c_4 e^{-m |x|^{\beta}}|x|^{-\delta}, 
\end{align*}
which completes the justification.
}
\end{example} 
 
\bigskip

\subsection{Sharpness of the convolution condition \textbf{(C)}}

We now compare the convolution condition in \textbf{(C)} for small $r$ with the assumption \textbf{(P)} proposed recently in \cite{KSz1}. In Proposition \ref{prop:rel} and Example \ref{ex:ex3} below we show that \textbf{(C)} always implies the inequality in \textbf{(P)} for the same range of $r \in (0,r_0]$ and $s \geq 8r_0$, but there are L\'evy measures and corresponding functions $f$ satisfying \textbf{(P)}, for which \textbf{(C)} fails. Note that the following result does not require the condition \textbf{(D)}.  

\begin{prop}\label{prop:rel}
Let $\nu$ be a L\'evy measure and let $f:\:(0,\infty)\to (0,\infty)$ be a nonincreasing 
function such that there is $r_0>0$ and a constant $L_1$ satisfying
\begin{align}
\label{eq:strong}
\int_{|x-y|>r_0,\ |y|>r} f(|y-x|) \,\nu(dy) \leq L_1 \Psi(1/r) f(|x|), \quad |x| \geq 2 r_0, \quad r \in (0,r_0].
\end{align}
Then there exists an absolute constant $M>0$ such that
\begin{align}
\label{eq:weak}
  \int_{|y|>r} f\left(s\vee |y|-\frac{|y|}{2} \right) \,\nu(dy) 
  \leq 
  M \Psi(1/r) f(s), \quad s \geq 8 r_0, \quad r \in (0,r_0].
\end{align}
\end{prop}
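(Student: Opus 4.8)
The plan is to deduce \eqref{eq:weak} from \eqref{eq:strong} by testing \eqref{eq:strong} against $x$ ranging over a sphere centred at the origin and averaging. Fix $s\ge 8r_0$ and $r\in(0,r_0]$, and write $T(y):=(s\vee|y|)-|y|/2$ for the argument of $f$ on the left of \eqref{eq:weak}; note $T(y)\ge s/2\ge 4r_0$ always, and $T(y)=|y|/2$ once $|y|>s$. I would first dispose of the far range $|y|>2s$ by hand: there $T(y)=|y|/2>s$, so $f(T(y))\le f(s)$ by monotonicity, and by \eqref{eq:Lm<Psi} together with the monotonicity of $\Psi$ (and $2s\ge r$),
\[
\int_{|y|>2s}f(T(y))\,\nu(dy)\le f(s)\,|\nubounded{2s}|\le L_5\,\Psi(1/(2s))\,f(s)\le L_5\,\Psi(1/r)\,f(s).
\]

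For $r<|y|\le 2s$ I would argue as follows. For a radius $R\ge s$ let $\Sigma_R=\{x\in\Rd:|x|=R\}$ carry its surface measure $\sigma_R$, of total mass $\kappa_d R^{d-1}$ ($\kappa_d$ a dimensional constant). Since every $x\in\Sigma_R$ has $|x|=R\ge 2r_0$, we may integrate \eqref{eq:strong} in $x$ over $\Sigma_R$; by Tonelli (the measure $\nu$ restricted to $\{|y|>r\}$ is finite by \eqref{eq:Lm<Psi}) and $f(|x|)=f(R)\le f(s)$ this gives
\[
\int_{|y|>r}\Big(\int_{\Sigma_R\cap\{|x-y|>r_0\}}f(|x-y|)\,\sigma_R(dx)\Big)\,\nu(dy)\ \le\ L_1\,\kappa_d\,R^{d-1}\,\Psi(1/r)\,f(s).
\]
The key observation is that the inner integral dominates $f(T(y))$ up to a factor of order $R^{d-1}$: restricting it to the cap $C_R(y):=\{x\in\Sigma_R:\ r_0<|x-y|\le T(y)\}$, on which $f(|x-y|)\ge f(T(y))$, the inner integral is at least $f(T(y))\,\sigma_R\big(C_R(y)\big)$. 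Thus everything reduces to the geometric lower bound
\[
\sigma_R\big(C_R(y)\big)\ \ge\ c_d\,R^{d-1},\qquad c_d>0 \text{ depending only on } d,
\]
which I claim holds with $R=s$ whenever $r<|y|\le \tfrac{7}{4}s$ and with $R=\tfrac32 s$ whenever $\tfrac74 s<|y|\le 2s$. Granting this and applying the displayed inequality with $R=s$, resp. $R=\tfrac32 s$, one gets $\int f(T(y))\,\nu(dy)\le (L_1\kappa_d/c_d)\,\Psi(1/r)f(s)$ over each subrange; adding the three contributions yields \eqref{eq:weak} with an $M$ independent of $s$ and $r$ (depending only on $d$, $L_1$ and $L_5$).

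The geometric bound is checked by the elementary identity $|x-y|^2=R^2+|y|^2-2R|y|\cos\phi$ for $x\in\Sigma_R$, $\phi$ being the angle between $x$ and $y$: then $\{x\in\Sigma_R:|x-y|\le T(y)\}=\{\cos\phi\ge\theta(y)\}$ with $\theta(y)=(R^2+|y|^2-T(y)^2)/(2R|y|)$, while $\{x\in\Sigma_R:|x-y|\le r_0\}=\{\cos\phi\ge\theta_1(y)\}$ with the analogous $\theta_1(y)$ (an empty set when $|R-|y||>r_0$). For $R=s$, $r<|y|\le \tfrac74 s$ one computes $\theta(y)=\tfrac12+\tfrac{3|y|}{8s}$ for $|y|\le s$ and $\theta(y)=\tfrac{s}{2|y|}+\tfrac{3|y|}{8s}$ for $s<|y|\le\tfrac74 s$, which in all cases is bounded above by a constant $<1$ (one gets $\theta(y)\le \tfrac{211}{224}$); and in the only case where the constraint $|x-y|>r_0$ matters, namely $|s-|y||\le r_0$, one checks (using $r_0\le s/8$) that $\theta_1(y)\ge 1-\tfrac{r_0^2}{2s(s-r_0)}\ge\tfrac{111}{112}$, while there $\theta(y)\le\tfrac78$, so $\theta_1(y)-\theta(y)$ is bounded below. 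Hence $C_s(y)$ is cut out by $\cos\phi$ ranging over an interval of length bounded below and contained in a fixed compact subinterval of $(-1,1)$, which forces $\sigma_s(C_s(y))\ge c_d s^{d-1}$. The case $R=\tfrac32 s$, $\tfrac74 s<|y|\le 2s$ is entirely analogous, and here $|R-|y||\ge \tfrac14 s>r_0$ so the $r_0$-constraint plays no role at all. For $d=1$ the spheres degenerate to antipodal pairs $\{-R,R\}$ and the same scheme works after splitting $\nu$ according to $\sgn(y)$ and taking $x=\pm R$ with $R$ slightly exceeding $\max(s,|y|)$ on each subrange so that $|x-y|>r_0$; the one-dimensional details are routine and I would leave them to the reader.

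The hard part is precisely this uniform spherical‑cap estimate: one has to choose the radius $R$ as a function of the range of $|y|$ (two fixed choices suffice) so that the cap defining $C_R(y)$ is non‑degenerate — i.e. the cosine threshold $\theta(y)$ stays away from $1$ — and simultaneously the sliver removed by $|x-y|>r_0$ is only a proportionally small cap around the pole; the numerical constants $\tfrac78$, $\tfrac{211}{224}$, $\tfrac{111}{112}$ appearing above are just what is needed to make both requirements precise. Everything else is monotonicity of $f$ and $\Psi$, Tonelli, \eqref{eq:Lm<Psi}, and the cosine rule.
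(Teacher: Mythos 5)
Your argument is correct for $d\ge 2$ and takes a genuinely different route from the paper. The paper's proof is a \emph{discrete} covering argument: it splits $\{|y|>r\}$ into $r<|y|<4r_0$, $4r_0\le|y|<s$, $s\le|y|<2s$, $|y|\ge 2s$ and, on the middle two ranges, tests \eqref{eq:strong} at finitely many points $sx_k$ (a carefully chained ball cover of the annulus) and $(s+r_0)z_k$ (a cone cover of $B(0,s)$), using monotonicity of $f$ to compare $f\left(s\vee|y|-|y|/2\right)$ with $f(|x_k-y|)$. You instead average \eqref{eq:strong} over full spheres $\Sigma_R$ ($R=s$ and $R=\tfrac32 s$), swap the order by Tonelli, and invert the averaging by a uniform lower bound on the surface measure of the spherical cap $C_R(y)=\{x\in\Sigma_R:\ r_0<|x-y|\le T(y)\}$, $T(y)=s\vee|y|-|y|/2$. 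This is a \emph{continuous} version of the same exploitation of isotropy. It is conceptually tidier: a single geometric estimate replaces two separate covers, and the $r_0$-exclusion is handled transparently by the band $\theta(y)\le\cos\phi\le\theta_1(y)$. In exchange one has to track explicit numerical thresholds (your $\tfrac78$, $\tfrac{211}{224}$, $\tfrac{111}{112}$) to keep $\theta$ bounded away from $1$ while $\theta_1$ stays close to $1$; I checked these and the cap estimate holds (one gets $\theta\in[\tfrac12,\tfrac{211}{224}]$ with $R=s$ on $r<|y|\le\tfrac74s$, and $\theta\in[\tfrac{\sqrt3}{2},\tfrac78]$ with $R=\tfrac32 s$ on $\tfrac74 s<|y|\le 2s$; the $r_0$-sliver only matters when $|y|\in[s-r_0,s+r_0]$, where $\theta\le\tfrac78$ and $\theta_1\ge\sqrt{1-(r_0/s)^2}\ge\sqrt{63}/8>\tfrac{111}{112}$). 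The $|y|>2s$ tail is disposed of as you do with \eqref{eq:Lm<Psi} and monotonicity of $\Psi$.

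The one place you are handwaving is $d=1$. There the spherical averaging degenerates (spheres are two points, so no cap estimate), and the substitute you sketch --- testing at $x=\pm R$ with $R$ ``slightly exceeding $\max(s,|y|)$'' --- does not go through as stated: for $|y|$ near $2s$ the exclusion $|R-y|>r_0$ forces $R>2s+r_0$, which is not a slight excess, while for $|y|$ near $s$ one must jump $R$ past $y+r_0$ without losing $|R-y|\le T(y)$; and if one instead allows $R<s$, one needs $f(R)\le Cf(s)$, which brings in an iterate of Lemma~\ref{lm:useful}(a) (itself a consequence of \eqref{eq:strong}). All of this is fixable with a finite partition of $(r,2s]$ into three or four pieces, each with its own constant $R_j$, but it is not ``routine'' in the sense your remark suggests, and it is not the same scheme. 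I would either write out the one-dimensional partition explicitly, or fall back on the paper's discrete covering for $d=1$, where the cone cover of $B(0,s)$ and the ball cover of the annulus reduce to two points and two intervals respectively.
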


\begin{proof}
We use the standard covering argument. First we introduce the two types of covers which will be used below. Let
\begin{align*}
n_0(d) := & \inf \Bigg\{n \in \N: \exists \ (x_k)_{k=1}^n \subset B(0,1)^c \ \text{s.t.} \ \forall \ 1\leq i\leq n \
\\ & 
\ \ \ \ \ \ \ \exists \ 1\leq j\leq n, \ 1/4 <|x_i-x_j|<3/8 \ \text{and} \ B(0,2) \cap B(0,1)^c \subset \bigcup_{k=1}^n B(x_k,1/2) \Bigg\}
\end{align*}
and
$$
n_1(d):=\inf\left\{n \in \N: \exists \ (z_k)_{k=1}^n \subset \sfera \ \text{s.t.}\ \sfera \subset \bigcup_{k=1}^n B(z_k,1/4) \right\}.
$$
Also, for $0 \neq z \in \R^d$ we denote
$$
\Gamma_z := \left\{y \in \R^d: \left|y - \frac{|y|}{|z|} z\right| \leq \frac{|y|}{4}\right\}.
$$
With this,  we may and do choose a finite sequence of points $(x_k)_{k=1}^{n_0} \subset B(0,1)^c$ such that for every $s>0$ we have $B(0,2s) \cap B(0,s)^c \subset \bigcup_{k=1}^{n_0} B(sx_k,s/2)$ and for every $x_i \in (x_k)_{k=1}^{n_0}$ there is another $x_j \in (x_k)_{k=1}^{n_0}$ such that $s/4 < |sx_i - sx_j|<3s/8$.

Similarly, we may and do find a sequence of points $(z_k)_{k=1}^{n_1} \subset \sfera$ such that for every $s>0$ we have $B(0,s) \subset \bigcup_{k=1}^{n_1} \Gamma_{s z_k}$. To verify the latter assertion it is enough to see that for every $0 \neq y \in B(0,s)$ there is $z_i \in (z_k)_{k=1}^{n_1}$ such that $(s/|y|)y \in B(sz_i,s/4)$. Thus $|y - (|y|/|sz_i|)sz_i| = (|y|/s)|(s/|y|)y-sz_i| \leq (|y|/s)(s/4)=|y|/4$ and, consequently, $y \in \Gamma_{sz_i}$.

We now apply the both covers above to estimate the integral on the left hand side of \eqref{eq:weak}. Let $r \in (0,r_0]$ and $s \geq 8r_0$ be fixed. We have
\begin{align*}
\int_{|y|>r} f\left(s\vee |y|-\frac{|y|}{2} \right) \,\nu(dy) & = \int_{r<|y|<4r_0} + \int_{4r_0 \leq |y|<s} + \int_{s\leq|y|<2s} + \int_{|y| \geq 2s} \\ & =: I_1(s)+I_2(s)+I_3(s)+I_4(s).
\end{align*}
Clearly, by Lemma \ref{lm:useful} (a) and \eqref{eq:Lm<Psi}, we have
$$
I_1(s) \leq f(s-2r_0)\nu(B(0,r)^c)  \leq c f(s) \Psi(1/r)
$$
and
$$
I_4(s) \leq f(s)\nu(B(0,2s)^c)  \leq \nu(B(0,r)^c) f(s) \leq c_1 f(s) \Psi(1/r).
$$
Hence, it is enough to estimate $I_2(s)$ and $I_3(s)$. We first consider $I_3(s)$. By using the first cover related to $n_0(d)$ introduced above, we have
\begin{align*}
I_3(s) 
& \leq \sum_{k=1}^{n_0} \left(\int_{|y| \geq s, \ s/8<|y-sx_k|<s/2}f\left(\frac{|y|}{2} \right) \,\nu(dy) \right. \\ & \ \ \ \ \ \ \ \ \ \ \ \ \ \ \ \ \ \ \ \ \ \ \ \ \ \ \ \ + \left.\int_{|y| \geq s, \ |y-sx_k|\leq s/8}f\left(\frac{|y|}{2} \right) \,\nu(dy) \right) 
\end{align*}
Recall that the above cover is chosen in the way that for every $x_i \in (x_k)_{k=1}^{n_0}$ we may find another $x_j \in (x_k)_{k=1}^{n_0}$ such that $\overline B(sx_i,s/8) \subset B(sx_j,s/2) \backslash \overline B(sx_j,s/8)$. By this fact we may be sure that
$$
I_3(s) 
 \leq  \sum_{k=1}^{n_0} n_k \int_{|y| \geq s, \ s/8<|y-sx_k|<r/2}f\left(\frac{|y|}{2}\right)  \,\nu(dy), 
$$
where $n_k-1 \geq 0$ is a number of small balls $B(sx_i,s/8)$ covered by $B(sx_k,s/2) \backslash \overline B(sx_k,s/8)$ for given $k$ (we assume that each ball $B(sx_i,s/8)$ is covered once). Clearly, $\sum_{k=1}^{n_0}n_k=2n_0$. Now since for $y \in \overline B(0,s)^c \cap B(sx_k,s/2)$ we have $|y-sx_k|<s/2 \leq |y|/2$, we get
\begin{align*}
I_3(s) & \leq  \sum_{k=1}^{n_0} n_k \int_{|y| \geq s, \ s/8<|y-sx_k|<s/2}f(|y-sx_k|) \,\nu(dy) \\ & \leq  \sum_{k=1}^{n_0} n_k \int_{|y| > r_0, \ r_0<|y-sx_k|}f(|y-sx_k|) \,\nu(dy)
\end{align*}
and by \eqref{eq:strong} and monotonicity of $\Psi$, we finally obtain
\begin{align*}
I_3(s) & \leq L_1 \Psi(1/r_0) \sum_{k=1}^{n_0} n_k  f(|sx_k|) \leq c_2 f(s) \Psi(1/r).
\end{align*}
It remains to estimate $I_2(s)$. This will be done by using the second cover related to $n_1(d)$. With this we have
\begin{align*}
I_2(s) & \leq \sum_{k=1}^{n_1} \int_{\left\{y:\, 4r_0<|y|<s\right\} \cap \Gamma_{sz_k}}f(s-|y|/2) \,\nu(dy).
\end{align*}
Whenever $|y|>4r_0$ and $y \in \Gamma_{sz_k}$ for some $k\in \left\{1,...,n_1\right\}$, we have 
\begin{align*}
s-|y|/2 & = s- (3/4)|y| + |y|/4 > (4r_0)/4 + (s-|y|)  + |y|/4 \\ 
& \geq |(s+r_0)z_k - sz_k| + |sz_k - (|y|/s)(sz_k)| + |(|y|/s)(sz_k) - y| \\ & \geq |(s+r_0)z_k - y|
\end{align*}
and, finally, 
\begin{align*}
I_2(s) & \leq \sum_{k=1}^{n_1} \int_{\left\{y:4r_0<|y|<s\right\} \cap \Gamma_{sz_k}}f(|(s+r_0)z_k - y|) \,\nu(dy) \\ & \leq \sum_{k=1}^{n_1} \int_{|y|> r_0, \  |(s+r_0)z_k - y| > r_0 }f(|(s+r_0)z_k - y|) \,\nu(dy).
\end{align*}
One more use of \eqref{eq:strong} and monotonicity of $\Psi$ gives
$$
I_2(s) \leq L_1 \sum_{k=1}^{n_1}f(|(s+r_0)z_k|) \Psi(1/r_0) \leq c_3 f(s) \Psi(1/r).
$$
The proof is complete.

\end{proof}

We now give some examples for which the converse implication in Proposition \ref{prop:rel} does not hold.

\begin{example} \label{ex:ex3} {\rm
Let $\nu(dy)= g(y)dy$ be a L\'evy measure such that $g(y) = g(-y) \asymp |y|^{-d-\alpha}(1+|y|)^{d+\alpha-\delta} e^{-m|x|^{\beta}}$ for $y \in \R^d \backslash \left\{0\right\}$, where $m>0$, $\beta \in (0,1]$, $\alpha \in (0,2)$, $\delta \geq 0$ (for simplicity we assume here that $b=0$). One can directly check that for this range of parameters the condition \textbf{(P)} is always satisfied for all $r >0$ and $s>0$ while, as proved in Proposition \ref{prop:convver}, the condition \textbf{(C)} does not hold when $\beta=1$ and $\delta \leq (d+1)/2$. By using \cite[Theorems 1-2]{KSz1} (cf. \cite{Knop13, ChKimKum}) we obtain that for all  $m>0$, $\beta \in (0,1]$, $\alpha \in (0,2)$ and $\delta \geq 0$ it holds that
$$
c_1 \left( t^{-d/\alpha} \wedge t g(x) \right) \leq p_t(x) \leq c_2 \left( t^{-d/\alpha} \wedge t g(x/4) \right), \quad x \in \R^d, \quad t \in (0,1]. 
$$
Similarly as in the examples discussed in the previous sebsection, this bound may suggest that the correct decay rate of $p_t(x)$ at infinity is again given exactly by $g(x)$. However, our Theorem \ref{th:main3} (Corollary \ref{cor:ex2}) states that when $\beta=1$ and $\delta \leq (d+1)/2$ then $g(x/4)$ in the upper bound for large $x$ cannot be replaced by $c_3 g(x)$ for any $c_3 >0$. This means that in this case the lower estimate is too weak and the correct two-sided bound for densities $p_t(x)$ is of different form!

The case $\beta=1$ and $\delta=(d+1)/2$ determines some kind of \textit{'phase transition'} in the dynamics of the convolution semigroups $(P_t)$. This subtle dichotomy phenomenon cannot be seen from the previously known results on the asymptotic behaviour of the kernels of jump processes (see e.g. \cite{S10, S11, ChKum08, ChKimKum, KSch1, KSz1, Knop13}), but also it cannot be definitively explained at this stage of our study. We close the discussion by recalling that this range of parameters also involves some well known and important classes of tempered L\'evy processes such as Lamperti stable ones and others, for which the optimal bounds of densities are still an open problem. 
}
\end{example}
{
\textbf{Acknowledgements.} We thank the anonymous referee for comments and suggestions on the paper. We also thank K. Pietruska-Pa{\l}uba for her reading of the preliminary version of the manuscript and useful suggestions. }

\end{document}